\numberwithin{equation}{section}
\newtheorem{Theorem}{Theorem}[section]
\newtheorem{Lemma}[Theorem]{Lemma}
\newtheorem{Proposition}[Theorem]{Proposition}
\newtheorem{Corollary}[Theorem]{Corollary}
\newtheorem{Remark}[Theorem]{Remark}
\numberwithin{equation}{section}
 \def\p{\partial} 
\def \Vh0{\stackrel{\circ}{V}_h}
  \def\f{\frac}  
   \def\eps{\varepsilon}
\def\p{\partial}
\newcommand{\lc}
{\mathrel{\raise2pt\hbox{${\mathop<\limits_{\raise1pt\hbox
{\mbox{$\sim$}}}}$}}}
\newcommand{\gc}
{\mathrel{\raise2pt\hbox{${\mathop>\limits_{\raise1pt\hbox{\mbox{$\sim$}}}}$}}}
\newcommand{\ec}
{\mathrel{\raise2pt\hbox{${\mathop=\limits_{\raise1pt\hbox{\mbox{$\sim$}}}}$}}}
\def\bb{\begin{equation}} \def\ee{\end{equation}}
\def\beqn{\begin{eqnarray}}  \def\eqn{\end{eqnarray}}
\def\beqnx{\begin{eqnarray*}} \def\eqnx{\end{eqnarray*}}
\def\bn{\begin{enumerate}} \def\en{\end{enumerate}}
\def\bd{\begin{description}} \def\ed{\end{description}}
\newenvironment{figurehere}
  {\def\@captype{figure}}
  {}
\title[Quantum ergodicity and localization of plasmon resonances]{Quantum ergodicity and localization of plasmon resonances}
\author{Habib Ammari}
\address{Department of Mathematics, ETH Z\"urich, R\"amistrasse 101, CH-8092, Switzerland}
\email{habib.ammari@math.ethz.ch}
\author{Yat Tin Chow}
\address{Department of Mathematics, University of California, Riverside, USA}
\email{yattinc@ucr.edu}
\author{Hongyu Liu}
\address{Department of Mathematics, City University of Hong Kong, Hong Kong SAR, China}
\email{hongyu.liuip@gmail.com, hongyliu@cityu.edu.hk}
\begin{document}
\maketitle

\begin{abstract}
We are concerned with the geometric properties of the surface plasmon resonance (SPR).
SPR is a non-radiative electromagnetic surface wave that propagates in a direction parallel to the negative permittivity/dielectric material interface. It is known that the SPR oscillation is very sensitive to the material interface. However, we show that the SPR oscillation asymptotically localizes at places with high magnitude of curvature in a certain sense.  Our work leverages the Heisenberg picture of quantization and 
quantum ergodicity first derived by Shnirelman, Zelditch, Colin de Verdi\`ere and Helffer-Martinez-Robert,
as well as
certain novel and more general ergodic properties of the Neumann-Poincar\'e operator to analyze the SPR field, which are of independent interest to the spectral theory and the potential theory. 
\end{abstract}

\medskip

\noindent{\bf Keywords:}~~surface plasmon resonance, localization, quantum ergodicity, high curvature, Neumann-Poincar\'e operator, quantization  \\ 

\noindent{\bf 2010 Mathematics Subject Classification:}~~58J50, 58J51, 35Q60, 82D80

\section{Introduction}\label{sect:1}

\subsection{Mathematical setup}\label{sect:1.1}
In this work, we are mainly concerned with the plasmonic eigenvalue problem as follows. Let $D$ be an open connected and bounded domain in $\mathbb{R}^d$, $d\geq 2$, with a $\mathcal{C}^{2,\alpha}$, $0<\alpha<1$, boundary $\partial D$ and a connected complement $\mathbb{R}^d\backslash\overline{D}$. Let $\gamma_c$ and $\gamma_m$ be two real constants with $\gamma_m\in\mathbb{R}_+$ given and fixed. Let 
\begin{equation}\label{eq:mc1}
\gamma_{D}  = \gamma_c \chi(D) +  \gamma_m \chi(\mathbb{R}^d \backslash \overline{D}),
\end{equation}
where and also in what follows, $\chi$ stands for the characteristic function of a domain. Consider the following homogeneous problem for a potential field $u\in H_{loc}^1(\mathbb{R}^d)$,
 \beqn
        \nabla \cdot (\gamma_{D} \nabla u) = 0 \ \mbox{in}\; \mathbb{R}^d;\ u(x) =  \mathcal{O}(|x|^{1-d})\ \ \mbox{when}\; d\geq 2 \ \ \mbox{ as }\; |x| \rightarrow \infty,
    \label{transmission}
\eqn
where the last asymptotic holds uniformly in the angular variable $\hat x:=x/|x|\in\mathbb{S}^{d-1}$ and is known as the decay condition. Note that \eqref{transmission} is equivalent to the following transmission problem:
\beqn
    \begin{cases}
        \Delta u = 0 &\hspace*{-2mm}\text{ in }\; D \cup (\mathbb{R}^d\backslash \overline{D} ) \, ,\\[1.5mm]
        u^+ = u^- &\hspace*{-2mm}\text{ on }\; \partial D \, ,\\[1.5mm]
        \gamma_c \f{\p u^{+}}{\p \nu}  =  \gamma_m \f{\p u^{-}}{\p \nu} &\hspace*{-2mm}\text{ on }\; \partial D \, ,\\[1.5mm]
        \mbox{$u$ satisfies the decay} &\hspace*{-2.3mm}\mbox{condition as $|x|\rightarrow\infty$},
    \end{cases}
    \label{transmission2}
\eqn
where $\pm$ signify the traces taken from the inside and outside of $D$ respectively. If there exists a nontrivial solution $u$ to \eqref{transmission2}, then $\gamma_c$ is referred to as a plasmonic eigenvalue and $u$ is the associated plasmonic resonant field. It is apparent that a plasmonic eigenvalue must be negative, since otherwise by the ellipticity of the partial differential operator (PDO) $\mathcal{L}_{\gamma_D} u:=\nabla \cdot (\gamma_D\nabla u)$, \eqref{transmission2} admits only a trivial solution. The plasmonic eigenvalue problem is delicately connected to the spectral theory of the Neumann-Poincar\'e (NP) operator as follows. Let $\Gamma$ be the fundamental solution of the Laplacian in $\mathbb{R}^d$ :
\beqn
    \Gamma (x-y) =
    \begin{cases}
     -\f{1}{2\pi} \log |x-y| & \text{ if }\; d = 2 \, ,\\
     \f{1}{(2-d)\varpi_d} |x-y|^{2-d} & \text{ if }\; d > 2 \, ,
    \end{cases}
    \label{fundamental}
\eqn
with $\varpi_d$ denoting the surface area of the unit sphere in $\mathbb{R}^d$. The Neumann-Poincar\'e (NP) operator $\mathcal{K}^*_{\partial D }: L^2(\partial D, d\sigma) \rightarrow L^2(\partial D , d \sigma)$ is defined by
\beqn
    \mathcal{K}^*_{\partial D} [\phi] (x) := \f{1}{\varpi_d} \int_{\partial D} \f{\langle  x-y,\nu (x) \rangle  }{|x-y|^d} \phi(y) d \sigma(y) \,,
    \label{operatorK}
\eqn
where $\nu(x)$ signifies the unit outward normal at $x \in \partial D$. It is remarked that the NP operator is a classical weakly-singular boundary integral operator in potential theory \cite{book,kellog}. Then a plasmonic resonant field to \eqref{transmission2} can be represented as a single-layer potential:
\beqn
    u(x)=\mathcal{S}_{\partial D} [\phi] (x) := \int_{\partial D} \Gamma(x-y) \phi(y) d \sigma(y),\; x\in\mathbb{R}^d, \label{eq:sl1} 
\eqn
where the density distribution $\phi\in H^{-1/2}(\partial D, d \sigma)$ satisfies 
\begin{equation}\label{eq:eigen1}
\mathcal{K}_{\partial D}^*[\phi]=\lambda(\gamma_c, \gamma_m)\phi,\quad \lambda(\gamma_c, \gamma_m):=\f{\gamma_c+\gamma_m}{2(\gamma_c-\gamma_m)}. 
\end{equation}
That is, in order to determine the plasmonic eigenvalue $\gamma_c$ of \eqref{transmission2}, it is sufficient to determine the eigenvalues of the NP operator $\mathcal{K}_{\partial D}^*$.  On the other hand, in order to understand the peculiar behavior of the plasmonic resonant field, one needs to study the quantitative properties of the NP eigenfunctions in \eqref{eq:eigen1} as well as the associated single-layer potentials in \eqref{eq:sl1}. In this paper, we are mainly concerned with the geometric properties of the plasmonic eigenmode $u$, namely its quantitative relationships to the geometry of $\partial D$. This leads us to establish more general quantum ergodic properties of the singularly integral operators $\mathcal{K}_{\partial D}^*$ and $\mathcal{S}_{\partial D}$. The plasmonic eigenvalue problem is the fundamental basis to the so-called surface plasmon resonance as shall be described in the following. The quantitative understanding of the plasmonic eigenmodes would yield deep theoretical insights on the surface plasmon resonance as well as produce significant physical and practical implications.

\subsection{Physical relevance and connection to existing studies of our results}\label{sect:1.2}

Surface plasmon resonance (SPR) is the resonant oscillation of conducting electrons at the interface between negative and positive permittivity materials stimulated by incident light. It is a non-radiative electromagnetic surface wave that propagates in a direction parallel to the negative permittivity/dielectric material interface. The SPR forms the fundamental basis for an array of industrial and engineering applications, from highly sensitive biological detectors to invisibility cloaks \cite{BS,FM,Kli,LZ,MN,OI,Sch,SPW,Z} through the constructions of different plasmonic devices.  The plasmonics was listed as one of the top ten emerging technologies of 2018 by the Scientific American, stating that ``light-controlled nanomaterials are revolutionizing sensor technology". Next, we briefly discuss the SPR in electrostatics and in Section~\ref{sect:5} we shall extend all the results in the electrostatic case to the scalar wave propagation problem in the quasi-static regime. 

Consider a medium configuration given in \eqref{eq:mc1}, where $\gamma_c$ and $\gamma_m$ respectively specify the dielectric constants of the domain $D$ and the background space $\mathbb{R}^d\backslash\overline{D}$. Let $u_0$ be a harmonic function $\mathbb{R}^d$ which represents an incident field. 
The electrostatic transmission problem is given for an electric potential field $u\in H_{loc}^1(\mathbb{R}^d)$ as follows,
\beqn
    \begin{cases}
        \nabla \cdot (\gamma_{D} \nabla u) = 0 \ \text{ in }\; \mathbb{R}^d, \\[1.5mm]
         u-u_0\ \ \mbox{ satisfies the decay condition as $|x|\rightarrow\infty$}.
    \end{cases}
    \label{transmissionE}
\eqn
Denote the perturbed potential field $u-u_0$ as a single-layer potential \eqref{eq:sl1} with a density function $\phi$ to be determined by the transmission conditions across $\partial D$. Using the following jump relation across $\partial D$: 
\beqn
    \f{\p}{\p \nu} \left(  \mathcal{S}_{\partial D} [\phi] \right)^{\pm} = (\pm \f{1}{2} Id + \mathcal{K}^*_{ \partial D} )[\phi]\,,
    \label{jump_condition}
\eqn
where $Id$ denotes the identity operator, one can show that
\beqn
    \frac{\partial u_0}{\partial \nu} =  \left( \f{\gamma_c+\gamma_m}{2(\gamma_c-\gamma_m)}Id - \mathcal{K}_{\partial D}^* \right) [\phi] \ \ \mbox{on}\ \ \partial D. 
    \label{potential2}
\eqn
Hence, formally there holds
\begin{equation}\label{eq:formally1}
u=u_0+\mathcal{S}_{\partial D}\circ\bigg(\lambda(\gamma_c,\gamma_m)Id-\mathcal{K}_{\partial D}^* \bigg)^{-1}\left[\frac{\partial u_0}{\partial\nu}\bigg|_{\partial D} \right],
\end{equation}
where $\lambda(\gamma_c,\gamma_m)$ is defined in \eqref{eq:eigen1}. Clearly, if $\lambda(\gamma_c, \gamma_m)$ is an eigenvalue to $\mathcal{K}_{\partial D}^*$, then resonance occurs for the boundary integral equation \eqref{potential2}. Consequently, due to a proper incident field $u_0$, resonance can be induced for the electrostatic system \eqref{transmissionE}. This exactly gives the plasmonic eigenvalue problem \eqref{transmission2} and the NP eigenvalue problem \eqref{eq:eigen1}.

According to our discussion above, the spectrum of the NP operator determines the plasmonic eigenvalues $\gamma_c$ through the relationship given by $\lambda(\gamma_c,\gamma_m)$. That is, the spectra of the NP operator determine the negative dielectric constants which can induce the plasmon resonances. Such a connection has aroused growing interest on studying the spectral properties of the NP operator \cite{ACKLM,Amm1,AJ,AKL,BZ,G,shapiro,LL1,plasmon1,weyl2,weyl1}. Since $\partial D$ is smooth, the NP operator $\mathcal{K}_{\partial D}^*$ is compact and hence its eigenvalues are discrete, infinite and accumulating at zero. Moreover, one has $\lambda(\mathcal{K}_{\partial D}^*)\subset (-1/2, 1/2]$. 
It can be directly verified that if $\gamma_c$ and $\gamma_m$ are both positive, then $|\lambda(\varepsilon_c, \varepsilon_m)|>1/2$. In such a case, the invertibility of the operator $( \lambda(\varepsilon_c, \varepsilon_m)I - \mathcal{K}_{\partial D}^*)$
from $L^2(\partial D, d \sigma )$ onto $L^2(\partial D, d \sigma)$ and from
$L_0^2(\partial D, d \sigma)$ onto $L_0^2(\partial D , d \sigma)$ can be proved (cf. \cite{book, kellog}) via the Fredholm theory. This once again necessitates the negativity of the plasmonic eigenvalues $\gamma_c$. It is easily seen, from the properties of $\mathcal{K}_{\partial D}^*$, that the NP eigenvalues are invariant with respect to rigid motions and scaling.
The spectrum of $ \mathcal{K}_{\partial D}^*$ can be explicitly computed for ellipses and spheres \cite{shapiro,seo}. It is worth pointing out that the convergence to zero of those eigenvalues is exponential for ellipses while it is algebraic for spheres. The exponential convergence is critical for the construction of plasmonic devices that can induce invisibility cloaks \cite{ACKLM2,LLL1}. 
Some other computations of Neumann-Poincar\'e eigenvalues as well as the corresponding plasmonic applications for different shapes can be found in \cite{curv_Liu_3,BZ,KLY}.
More recently in \cite{weyl2, weyl1}, it is derived in three dimensions a quantization rule of the NP eigenvalues, showing that the leading-order asymptotic of the $j$-th ordered NP eigenvalue with $j\gg 1$ can be expressed in terms of two global geometric quantities of $\partial D$, namely the Euler characteristic and the Willmore energy. 

It is clear to see that if plasmon resonance occurs, the peculiar behaviors of the resonant field critically depend on the quantitative properties of the NP eigenfunctions. Indeed, according to \eqref{potential2} and via the spectral resolution, the density distribution $\phi$ can be expressed in terms of the NP eigenmodes, and this in turn gives the resonant modes via the single-layer potentials. The SPR field is the superposition of those resonant modes. Hence, in order to gain a thorough understanding of the resonant field, one should carefully study the quantitative properties of the NP eigenfunctions as well as the associated single-layer potentials. However, to our best knowledge, there is little study in the literature on this aspect. It is known that the SPR propagation is very sensitive to the material interface $\partial D$. That is, the SPR is sensitive to any change of the global geometry of $\partial D$. In fact, such a geometric sensitivity forms the fundamental basis of the aforementioned bio-sensing application of SPRs.  Nevertheless, it is speculated that the SPRs may possess certain invariant/robust property related to the local geometry of $\partial D$. Indeed, it is observed in several numerics for some specific geometries \cite{curv_Liu_3} that the SPR waves reveal certain concentration/localization phenomenon at places where the magnitude of the associated curvature is relatively high.  In another recent work \cite{corner}, the behavior of SPR modes around corners of an object is also studied, and concentration/localization around corners are also observed.

The aim of this paper is to rigorously establish the local geometric invariant property of the SPRs in a very general setup. In fact, we show that the SPR wave localizes in a certain sense at places where the magnitude of the associated extrinsic curvature (namely the second fundamental form) is relatively high. Since the SPR depends on $\partial D$ globally, it is highly nontrivial to extract the local geometric information. Nevertheless, we establish a certain more general property of the SPR waves with the help of quantum ergordicity, with which the localization property is a natural consequence of the dynamics of an associated Hamiltonian. In addition to its theoretical significance, our result may have potential applications in generating SPRs that break the quasi-static limit \cite{LL1} and produce plasmonic cloaks \cite{ACKLM,MN}, which are worth investigation in a future study.  Our study also leverages certain novel ergodic properties of the NP operator, which should be of independent interest to the spectral theory and potential theory. 

\subsection{Discussion of the technical novelty}

Finally, we briefly discuss the mathematical strategies of establishing the quantum ergodicity and localization results. As discussed earlier, we need to analyze the geometric structures of the NP eigenfunctions as well as the associated single-layer potentials; that is, the quantitative behaviors of those distributions that are related to the boundary geometry of the underlying domain. Treating those layer-potential operators as pseudo-differential operators, we consider the Hamiltonian flows of the principle symbols of those operators.  In particular, we obtain, via a generalized Weyl's law, that the asymptotic average of the magnitude of the NP eigenfunctions in a neighborhood of each point is directly proportional to a weighted volume of the characteristic variety at the respective point.   Moreover, following the pioneering works of Shnirelman \cite{erg1,erg11}, Zelditch \cite{Zel,erg3,erg32,erg33,erg34,erg35}, Colin de Verdi\`ere \cite{erg2} and Helffer-Martinez-Robert \cite{erg4}
(See also \cite{erg_a,erg_b,trace,Ego}), by considering the Heisenberg picture and lifting the Hamiltonian flow of a principal symbol to a wave propagator, we generalize a result of quantum ergodicity via an application of the ergodicity decomposition theorem to include more general dynamics that may not be ergodic with respect to the Riemannian measure on $ \{ H = 1 \}$ (which is the Liouville measure when $H$ generates the geodesic flow). From that, we obtain a subsequence (of density one) of eigenfunctions such that their magnitude weakly converges to a weighted average of ergodic measures.  This weighted average at different points relates to the volumes of the characteristic variety at the respective points. 
We also provide an upper and lower bounds of the volume of the characteristic variety as functions only depending on the principal curvatures.  
We therefore can characterize the localization of the plasmon resonance by the associated extrinsic curvature at a specific boundary point.
From our result, we have also associated the understanding of plasmon resonances to the dynamical properties of the Hamiltonian flows. For instance, a Hamiltonian circle action will result in a parametrization of ergodic measures by a compact symplectic manifold of dimension $2d-4$ via a symplectic reduction. Our study opens up a new filed with many possible developments on the quantitative properties of plasmon resonances as well as on the spectral properties of Neumann-Poincar\'e type operators. 

The rest of the paper is organized as follows. In Section 2, we briefly discuss the principal symbols of layer-potential operators. In Section 3, we recall the generalized Weyl's law, and generalize the argument of the quantum ergodicity to obtain a variance-like estimate.
In Section 4, we apply the generalized Weyl's law and our generalization of the quantum ergodicity to obtain a comparison result of the magnitude of NP eigenfunction at different points with extrinsic curvature information at the respective points.
Combining these results gives a description of localization of the plasmon resonance around points of high curvature.
We consider extensions to the plasmon resonance in the Helmholtz transmission problem in the quasi-static regime in Section 5.
In Appendix A, we present further discussion upon geometric descriptions of the related Hamiltonian flows.

\section{Symbols of potential operators}\label{sect:2}

In this section, we present the principle symbols of the Neumann-Poincar\'e operator \eqref{operatorK} and the single-layer potential operator $\mathcal{S}_{\partial D}$ in \eqref{eq:sl1} associated with a shape $D$ sitting inside a general space $\mathbb{R}^d$ for any $d\ge 2$.
The special three-dimensional case was first treated in \cite{weyl1,weyl2}, and the general case was considered in \cite{ACL}.
Since this result is of fundamental importance for our subsequent analysis, we shall briefly restate here for the sake of completeness.

We briefly introduce the geometric description of $D\subset\mathbb{R}^d$. Consider a regular parametrization of the surface $\partial D$ as
\beqn
\mathbb{X}: U \subset \mathbb{R}^{d-1} &\rightarrow& \partial D \subset \mathbb{R}^{d}, \notag \\
u = (u_1, u_2, ..., u_{d-1}) &\mapsto &\mathbb{X}(u) \notag \,.
\eqn
For notational sake, we often write the vector $\mathbb{X}_j := \f{\p \mathbb{X}}{\p u_j}$, $j=1,2,\ldots, d-1$.
For a given $d-1$ vector $\{v_j\}_{j = 1}^{d-1}$, we denote
the $d-1$ cross product $\times_{j=1}^{d-1} v_j = v_1 \times v_2 ... \times v_{d-1} $
as the dual vector of the functional $\det(\, \cdot \,, v_1, v_2, ..., v_{d-1} )$, i.e., $\langle w, \times_{j=1}^{d-1} v_i \rangle = \det(w, v_1, v_2, ..., v_{d-1} ) $ for any $w$, whose existence is guaranteed by the Riesz representation theorem.
Then, from the fact that $\mathbb{X}$ is regular, we know that $\times_{j=1}^{d-1} \mathbb{X}_j$ is
non-zero, and the normal vector $\nu := \times_{j=1}^{d-1} \mathbb{X}_j / |\times_{j=1}^{d-1} \mathbb{X}_j | $
is well-defined.
Next, we introduce the following matrix $\mathcal{A}_{ij} (x), x \in \partial D $ , defined as
\beqn
\mathcal{A}(x) := ( \mathcal{A}_{ij}(x) ) =  \langle \textbf{II}_x (\mathbb{X}_i ,\mathbb{X}_j), \nu_x \rangle \notag \,,
\eqn
where $\textbf{II}$ is the second fundamental form given by
\beqn
\textbf{II} : T(\p D) \times T(\p D) &\rightarrow& T^\perp (\p D), \notag \\
\textbf{II}(v,w)&=& - \langle \bar{\nabla}_{v} \nu, w \rangle \nu =  \langle \nu,  \bar{\nabla}_{v}  w \rangle \nu, \notag
\eqn
with $\bar{\nabla}$ being the standard covariant derivative on the ambient space $\mathbb{R}^{d}$.
Moreover, we write $\mathcal{H}(x), x \in \partial D$,  as the mean curvature satisfying 
$$ \text{tr}_{g(x)} (\mathcal{A}(x)) :=  \sum_{i,j = 1}^{d-1} g^{ij}(x) \mathcal{A}_{ji}(x) := (d-1) \mathcal{H} (x) \, , $$
with $(g^{ij}) = g^{-1}$ and $g=(g_{ij})$ being the induced metric tensor. 
From now on, we shall always assume $\mathcal{A}(x) \neq 0$ for all $x \in \partial D$ in this work.
We are now ready to present the principle symbol of $\mathcal{K}^*_{\partial D}$ (cf. \cite{weyl1,weyl2,ACL}).
\begin{Theorem}
The operator $\mathcal{K}^*_{\partial D}$ is a pseudodifferential operator of order $-1$ on $\partial D$ if $\partial D \in \mathcal{C}^{2,\alpha}$ with its symbol given as follows in the geodesic normal coordinate around each point $x$:
\begin{equation}\label{eq:s1}
\begin{split}
p_{\mathcal{K}^*_{\partial D}}(x,\xi) =& p_{\mathcal{K}^*_{\partial D} , -1}(x,\xi)  +  \mathcal{O}(|\xi|^{-2})\\
   =&  (d-1) \mathcal{H}(x) \,  |\xi|^{-1} -  \langle \mathcal{A}(x)  \, \xi, \, \xi \rangle \, |\xi|^{-3}
+  \mathcal{O}(|\xi|^{-2}) \, ,
\end{split}
\end{equation}
where the asymptotic $\mathcal{O}$ depends on $\| \mathbb{X} \|_{\mathcal{C}^2} $.
Hence $\mathcal{K}^*_{\partial D}$ is a compact operator of Schatten $p$ class $S_p$ for $p > d-1$ with $d > 2$.  
\end{Theorem}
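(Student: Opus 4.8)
The plan is to establish \eqref{eq:s1} by a localization argument together with a second-order Taylor expansion of the Schwartz kernel of $\mathcal{K}^*_{\partial D}$ about the diagonal. The decisive point is that the factor $\langle x-y,\nu(x)\rangle$ in \eqref{operatorK} vanishes to second order as $y\to x$ (tangency of $\partial D$ to its tangent plane at $x$), and it is exactly this extra vanishing that drops the order of the operator from $0$ to $-1$ and produces the geometric symbol. First I would fix a finite atlas of $\partial D$ and a subordinate partition of unity; since the kernel $\varpi_d^{-1}\langle x-y,\nu(x)\rangle\,|x-y|^{-d}$ is real-analytic away from $\{x=y\}$, every piece of the operator supported off the diagonal is smoothing and does not affect the symbol, so it suffices to work in a single chart near a point $x$. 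After a rigid motion one may assume $x=0$, $T_0(\partial D)=\{v_d=0\}$, $\nu(0)=e_d$, and that $\partial D$ is the graph $v_d=f(v')$, $v'\in\mathbb{R}^{d-1}$, with $f(0)=0$, $\nabla f(0)=0$; taking the regular parametrization $\mathbb{X}$ to be geodesic normal coordinates at $x$ one has in addition $g_{ij}(0)=\delta_{ij}$ and $D^2 f(0)=\big(\mathcal{A}_{ij}(0)\big)$.

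Next, writing $y=(v',f(v'))$ and using only $\partial D\in\mathcal{C}^{2,\alpha}$, Taylor's theorem with Hölder remainder gives
\[
\langle x-y,\nu(x)\rangle=-f(v')=-\tfrac12\langle\mathcal{A}(0)v',v'\rangle+\mathcal{O}(|v'|^{2+\alpha}),
\]
together with $|x-y|^2=|v'|^2\big(1+\mathcal{O}(|v'|^2)\big)$ and $d\sigma(y)=\big(1+\mathcal{O}(|v'|^2)\big)\,dv'$. Consequently the kernel of $\mathcal{K}^*_{\partial D}$, pulled back to the chart and multiplied by the surface element, splits as $k_0(v')\,dv'+k_1(v')\,dv'$ with
\[
k_0(v')=-\frac{1}{2\varpi_d}\,\frac{\langle\mathcal{A}(0)v',v'\rangle}{|v'|^{d}},\qquad k_1(v')=\mathcal{O}\big(|v'|^{2+\alpha-d}\big).
\]
The leading term $k_0$ is homogeneous of degree $2-d$ in $v'\in\mathbb{R}^{d-1}$, hence locally integrable and tempered, and its fibrewise Fourier transform is the principal symbol. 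By scaling and rotation-covariance, $\widehat{k_0}(\xi)$ must be of the form $a(x)\,|\xi|^{-1}+b\,\langle\mathcal{A}(x)\xi,\xi\rangle\,|\xi|^{-3}$ with $a(x)$ linear in $\mathcal{A}(x)$, hence a multiple of $\text{tr}\,\mathcal{A}(x)$; computing the two scalar constants — most transparently via $\widehat{v'_iv'_j h}(\xi)=-\partial_{\xi_i}\partial_{\xi_j}\widehat h(\xi)$, the explicit Fourier transform of $|v'|^{-d}$ on $\mathbb{R}^{d-1}$, and $\partial_{\xi_i}\partial_{\xi_j}|\xi|=\delta_{ij}|\xi|^{-1}-\xi_i\xi_j|\xi|^{-3}$ — and using $\text{tr}\,\mathcal{A}(0)=\text{tr}_{g(0)}\mathcal{A}(0)=(d-1)\mathcal{H}(0)$ since $g(0)=\mathrm{Id}$ in geodesic normal coordinates, one obtains exactly the term $(d-1)\mathcal{H}(x)|\xi|^{-1}-\langle\mathcal{A}(x)\xi,\xi\rangle|\xi|^{-3}$ of \eqref{eq:s1}, all geometric quantities being evaluated at the base point $x$.

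The step I expect to be the main obstacle is the control of the remainder: one has to show that $k_1$, together with the sub-leading parts of $|x-y|^{-d}$ and of $d\sigma$, produces a symbol that is lower order with constant controlled by $\|\mathbb{X}\|_{\mathcal{C}^2}$, and — crucially — that this residual piece is still a genuine (if only finitely differentiable) symbol, so that the localization and composition steps are legitimate. One cannot simply quote the smooth pseudodifferential calculus here, because the symbol class is only as regular as $\mathbb{X}\in\mathcal{C}^{2,\alpha}$ permits and each estimate must be tracked through the limited-regularity Taylor remainders. The crude bound $k_1=\mathcal{O}(|v'|^{2+\alpha-d})$ already yields a symbol that is $\mathcal{O}(|\xi|^{-1-\alpha})$, hence negligible against $p_{\mathcal{K}^*_{\partial D},-1}$; upgrading this to the $\mathcal{O}(|\xi|^{-2})$ recorded in \eqref{eq:s1} requires exploiting the precise structure of the next correction — its homogeneity-$(3-d)$ part is odd in $v'$, so its Fourier transform is of order $-2$. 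This delicate bookkeeping was carried out for $d=3$ in \cite{weyl1,weyl2} and for general $d\ge2$ in \cite{ACL}, and I would follow those computations; reassembling the local expansions with the partition of unity then yields \eqref{eq:s1}.

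Finally, once $\mathcal{K}^*_{\partial D}$ is known to be a pseudodifferential operator of order $-1$ on the compact $(d-1)$-dimensional manifold $\partial D$, the Schatten statement follows from standard singular-value asymptotics for negative-order operators: writing $\mathcal{K}^*_{\partial D}=(1-\Delta_{\partial D})^{-1/2}B$ with $B:=(1-\Delta_{\partial D})^{1/2}\mathcal{K}^*_{\partial D}$ bounded on $L^2(\partial D,d\sigma)$ and using Weyl's law for the Laplace--Beltrami operator on a $(d-1)$-manifold, one gets $s_j(\mathcal{K}^*_{\partial D})\le\|B\|\,(1+\mu_j)^{-1/2}=\mathcal{O}\big(j^{-1/(d-1)}\big)$ as $j\to\infty$, whence $\sum_j s_j(\mathcal{K}^*_{\partial D})^p\le C\sum_j j^{-p/(d-1)}<\infty$ precisely when $p/(d-1)>1$, i.e. $p>d-1$; for $d>2$ this is the asserted Schatten exponent, completing the proof.
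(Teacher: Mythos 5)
The paper does not actually prove this theorem: it is explicitly restated ``for the sake of completeness'' with the proof deferred to \cite{weyl1,weyl2} (for $d=3$) and \cite{ACL} (general $d$), and no proof environment follows the statement. Your proposal is nevertheless a faithful reconstruction of the route taken in those references: localize with a partition of unity (the off-diagonal pieces being smoothing), pass to graph coordinates over the tangent plane (which agree with geodesic normal coordinates to second order, so $g_{ij}(0)=\delta_{ij}$ and $D^2f(0)=\mathcal{A}(0)$), observe that $\langle x-y,\nu(x)\rangle$ vanishes to second order on the diagonal, isolate the degree-$(2-d)$ homogeneous leading kernel $-\tfrac{1}{2\varpi_d}\langle\mathcal{A}(0)v',v'\rangle|v'|^{-d}$, and compute its Fourier transform using $\partial_{\xi_i}\partial_{\xi_j}|\xi|=\delta_{ij}|\xi|^{-1}-\xi_i\xi_j|\xi|^{-3}$ together with $\operatorname{tr}\mathcal{A}(0)=(d-1)\mathcal{H}(0)$. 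The Schatten assertion from the $(1-\Delta_{\partial D})^{-1/2}$ factorization and Weyl's law on the compact $(d-1)$-manifold $\partial D$ is likewise the standard argument and is correct.

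One genuine issue in your proposal deserves attention, and you have in fact half-noticed it yourself. Your strategy for upgrading the remainder from the crude $\mathcal{O}(|\xi|^{-1-\alpha})$ to the stated $\mathcal{O}(|\xi|^{-2})$ relies on ``the homogeneity-$(3-d)$ part'' of the next correction being odd in $v'$. But under the hypothesis $\partial D\in\mathcal{C}^{2,\alpha}$ the graph function $f$ has no cubic Taylor term at all: the Taylor expansion of $f$ stops at the quadratic term with a H\"older remainder $\mathcal{O}(|v'|^{2+\alpha})$, and that remainder carries no useful parity. So the odd-cubic cancellation you invoke is simply not available at this regularity; exploiting it would require $\partial D\in\mathcal{C}^{3}$ (or $\mathcal{C}^{3,\alpha}$). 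Under the stated $\mathcal{C}^{2,\alpha}$ hypothesis the honest remainder order is $\mathcal{O}(|\xi|^{-1-\alpha})$ with constant controlled by $\|\mathbb{X}\|_{\mathcal{C}^{2,\alpha}}$ (not $\|\mathbb{X}\|_{\mathcal{C}^2}$ alone, which cannot control any sub-principal behaviour). This does not affect the principal-symbol identification, the order-$(-1)$ conclusion, or the Schatten estimate, all of which use only the leading term; but your proof sketch, as written, proves a weaker remainder bound than the display \eqref{eq:s1} claims, and the mechanism you propose for closing that gap does not apply under the theorem's own regularity hypothesis.
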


\noindent A remark is that the above result holds also for $\mathcal{K}_{\partial D}$ instead of $\mathcal{K}^*_{\partial D}$ when we only look at the leading-order term. Here, $\mathcal{K}_{\partial D}$ signifies the $L^2(\partial D, d \sigma)$-adjoint of the NP operator $\mathcal{K}^*_{\partial D}$. 
We would also like to remark that if geodesic normal coordinate is not chosen, and for a general coordinate, we have instead
\beqnx
p_{\mathcal{K}^*_{\partial D}}(x,\xi) &=&  (d-1) \mathcal{H}(x) \,  |\xi|_{g(x)}^{-1} -  \langle \mathcal{A}(x)  \, g^{-1}(x) \, \xi, \, g^{-1}(x) \,\xi \rangle \, |\xi|_{g(x)}^{-3}
+  O(|\xi|_{g(x)}^{-2}) \,.
\eqnx

From the fact that the Dirichlet-to-Neumann map $\Lambda_0 : H^{1/2}(\partial D, d \sigma) \rightarrow H^{-1/2}(\partial D, d \sigma)$ of the Laplacian in the domain $D \subset \mathbb{R}^d$ satisfies the following \cite{uhlmann}:
\begin{equation}\label{eq:s11}
\begin{split}
p_{\Lambda_0}(x,\xi) = p_{\Lambda_0,1}(x,\xi) +  \mathcal{O}(1) =  |\xi|_{g(x)} +  \mathcal{O}(1) \,,
\end{split}
\end{equation}
together with the jump relation \eqref{jump_condition}, one can handily compute that:
\begin{equation}\label{eq:s2}
\begin{split}
p_{\mathcal{S}_{\partial D}}(x,\xi) = p_{\mathcal{S}_{\partial D},-1}(x,\xi) +  \mathcal{O}(|\xi|_{g(x)}^{-2}) =  \frac{1}{2} |\xi|_{g(x)}^{-1} +  \mathcal{O}(|\xi|_{g(x)}^{-2}) \,.
\end{split}
\end{equation}
We recall the following well-known Kelley symmetrization identity:
\begin{equation}\label{eq:s3}
\mathcal{S}_{\partial D}  \, \mathcal{K}^*_{\partial D} =   \mathcal{K}_{\partial D} \,  \mathcal{S}_{\partial D},
\end{equation}
which indicates that $\mathcal{K}_{\partial D}^*$ is symmetrizable on $H^{-1/2}(\partial D , d \sigma)$ (cf., e.g., \cite{putinar, ando}), i.e. $\mathcal{K}_{\partial D}^*$ is a self-adjoint operator on $L^2_{S_{\partial D}} (\partial D) := ( \overline{ \mathcal{C}^{\infty} (\partial D)}^{\| \cdot \|_{S_{\partial D}} } , \langle \cdot , \cdot \rangle_{S_{\partial D}} ) $, where, for any $f \in L^2_{S_{\partial D}} (\partial D) $,
\[
\| f \|_{S_{\partial D}}^2 := \langle f, f\rangle_{S_{\partial D}} :=  - \langle S_{\partial D} f , f \rangle_{H^{1/2}(\partial D , d \sigma), H^{-1/2}(\partial D , d \sigma)} \,,
\]
is a well-defined inner product for $d\geq 3$ and with a minor modification for $d=2$ (see \cite{ando, Amm1}). 
We remark that there is an equivalence between the two norms $ \| \cdot \|_{S_{\partial D}} $ and $ \| \cdot \|_{H^{-1/2}(\partial D, d \sigma)} $.

Using the symmetrization identity \eqref{eq:s3} (which gives us self-adjointness of the operator $( \mathcal{S}_{\partial D})^{\frac{1}{2}} \, \mathcal{K}^*_{\partial D} ( \mathcal{S}_{\partial D})^{-\frac{1}{2}} $ under the standard $L^2$ inner product) and by comparing the corresponding symbols, together with the fact that $\mathcal{S}_{\partial D}$ is self-adjoint, we have
{\small
\begin{equation}\label{eq:s4}
\begin{split}
\mathcal{K}^*_{\partial D} =& |D|^{-1} \left \{ (d-1) \mathcal{H}(x) \Delta_{\partial D} - \sum_{i,j,k,l = 1}^{d-1} \frac{1}{ \sqrt{|g(x)|} } \partial_i g^{ij}(x) \sqrt{|g(x)|} \mathcal{A}_{jk}(x) g^{kl}(x) \partial_{l} \right\} |D|^{-2} \text{ mod } \Phi \text{SO}^{-2},\medskip \\
\mathcal{S}_{\partial D} =& \frac{1}{2} |D|^{-1}  \text{ mod } \Phi \text{SO}^{-2}. 
\end{split}
\end{equation}
}In \eqref{eq:s4}, $ \Delta_{\partial D}$ is the surface Laplacian of $\partial D$, and $|D|^{-1} := \mathrm{Op}_{|\xi|_{g(x)}^{-1}} $ with $\mathrm{Op}_{a} = \mathcal{F}^{-1} \circ m_{a} \circ\mathcal{F}$ being the action given by the symbol without any large/small parameter, where $\mathcal{F}$ is the Fourier transform (defined via a partition of unity, and is unique modulus $\Phi \text{SO}^{m-1}$ if $a \in \tilde{\mathcal{S}}^m(T^*(\partial D))$) that belongs to the symbol class of order $m$, and $m_a$ is the action with multiplication by the symbol $a$.  We notice that the operator in the curly bracket in \eqref{eq:s4} is itself symmetric.
We therefore have
\beqnx
\mathcal{K}^*_{h,\partial D} := \frac{1}{h} |D|^{-\frac{1}{2}}   \mathcal{K}^*_{\partial D} |D|^{\frac{1}{2}}
\eqnx
being self adjoint up to $\text{mod } h \Phi \text{SO}^{-2}_h$.
Here and also in what follows, $ \Phi \text{SO}^{-m}_h$ is the pseudo-differential operator with action $\mathrm{Op}_{a,h} := \mathcal{F}^{-1}_h \circ m_{a} \circ\mathcal{F}_h$, i.e., with a small parameter $h$ (again uniquely defined modulus $h \Phi \text{SO}_h^{m-1}$ if $a \in \tilde{\mathcal{S}}^m(T^*(\partial D))$) belonging to the symbol class of order $m$. For clarity, the following notations and definitions are used in our study,
\[
\begin{split}
 \bigcup_{i} U_i = \partial D \,,\quad & F_i : \pi_i^{-1} (U_i) \rightarrow U_i \times \mathbb{R}^{d-1}\,,  \quad  \sum_i \psi_i = 1 \,, \quad \text{supp}(\psi_i) \subset U_i\, ; \\
\widetilde{S}^m (T^*(\partial D) )  :=&  \left \{ a: T^*(\partial D)  \backslash \partial D \times \{0\}  \rightarrow \mathbb{C} \, ; \, a = \sum_{i} \psi_i F_i^* a_i, a_i \in \widetilde{S}^m (U_i \times \mathbb{R}^{d-1} \backslash \{0\} ) \right \}; \\
\widetilde{S}^m (U_i \times \mathbb{R}^{d-1} )  := & \bigg\{ a:  U_i \times ( \mathbb{R}^{d-1} \backslash \{0\} )  \rightarrow \mathbb{C} \, ;\\
&\qquad a \in \mathcal{C}^{\infty} (  U_i \times ( \mathbb{R}^{d-1} \backslash \{0\} )  ) \, , \, | \partial_\xi^\alpha \partial_x^{\beta} a (x,\xi) | \leq C_{\alpha, \beta} ( |\xi| )^{ m - |\alpha| } \bigg\}. 
\end{split}
\]

Finally, we note that $\left(\frac{\lambda_i^2}{h},  |D|^{-\frac{1}{2}}  \phi_i\right)$ is an eigenpair of $ [ \mathcal{K}^*_{h,\partial D}  ]^2$ if and only if 
$\left(\lambda_i^2, \phi_i\right)$ is an eigenpair of $ \mathcal{K}^*_{\partial D} $ (cf. \eqref{eq:eigen1}).
Throughout the rest of the paper, we denote
\begin{equation}\label{eq:eg1}
 ( \lambda_i^2(h),  \phi_i(h) ) := \left(\frac{\lambda_i^2}{h},  |D|^{-\frac{1}{2}}  \phi_i\right).
 \end{equation}

\section{Generaalized Weyl's law and quantum ergodicity over the Neumann-Poincar\'e operator}\label{sect:3}

In this section, we recall the concept of quantum ergodicity following the pioneering works of Shnirelman \cite{erg1,erg11}, Zelditch \cite{Zel,erg3,erg32,erg33,erg34,erg35}, Colin de Verdi\`ere \cite{erg2} and Helffer-Martinez-Robert \cite{erg4}, (see also \cite{erg_a,erg_b,trace,Ego}). Although it is a classical theorem, we would still sketch the proofs to some of the materials for the sake of completeness.  Meanwhile, for our subsequent use, we would slightly generalize it via the ergodicity decomposition theorem.

\subsection{Hamiltonian flows of principle symbols}
We begin by considering the following Hamiltonian
\begin{equation}\label{eq:hf1}
\begin{split}
H: T^*(\partial D)  \rightarrow & \mathbb{R} \\ 
H(x, \xi) =& [ p_{\mathcal{K}^*_{\partial D},-1}(x,\xi)  ]^2 \geq 0 \,.
\end{split}
\end{equation}
Note that $T^*(\partial D) $ is endowed with the standard symplectic form $\omega := \sum_{i=1}^{d=1} dx_i \wedge d \xi_i = d \alpha$, where $\alpha := \sum_{i=1}^{d=1} x_i \, d \xi_i  $  is the canonical 1-form.  
Notice that $H$ is only smooth outside $\partial D \times \{0\} \hookrightarrow T^*(\partial D)$.   We now introduce an assumption for our study:

\vskip 2mm

\noindent \textbf{Assumption (A)}  We assume $\langle \mathcal{A}(x) \, g^{-1}(x) \, \omega \, ,\,  g^{-1}(x) \, \omega \rangle \neq (d-1) \mathcal{H}(x)$ for all $x \in \partial D$ and $\omega \in \{ \xi :  |\xi |_{g(x)}^2 = 1 \} \subset T_x^*(\partial D)$.

\vskip 2mm

\noindent As shall be further explored in Appendix \ref{Appendix_A}, this assumption is related to the regularity of the Hamiltonian flow generated by $H$ on the set $\overline{ \{H = 1\} }$.  In particular, it holds if and only if $\overline{ \{H = 1\} } \bigcap \left(  \partial D  \times \{0\} \right)  = \emptyset$.  When $d =3$, this condition implies the strict convexity of $D$.
In the rest of the paper and up till the Appendix, we shall always assume the validity of Assumption (A).   We speculate that this assumption is not necessary for the conclusions of our theorems to hold, and it might be relaxed via techniques introduced in the study of (exotic) non-smooth homogeneous symbol class, e.g. in \cite{homogeneous}. However, we choose to explore along that direction in a future study. 

It can be verified that Assumption (A) is equivalent to the condition that the Hamiltonian $H \neq 0$ everywhere.  
With such an observation and gazing at \eqref{eq:eigen1}, we immediately infer that $\phi $ in \eqref{eq:eigen1} actually sits in $H^{s} (\partial D, d \sigma)$ for all $s$, and hence by the Sobolev embedding, $\phi \in \mathcal{C}^{\infty} (\partial D)$.

Next, we introduce the following auxiliary function:
\beqnx
\rho: \mathbb{R}_+ := \{ r \in \mathbb{R} : r \geq 0 \} &\rightarrow& \mathbb{R}, \\
\rho(r) &=& 1-\exp(-r) \,,
\eqnx
which shall be used in our subsequent analysis.
We realize that $\rho(r)\geq 0$ and $\rho'(r) > 0$ for all $r\in\mathbb{R}_+$.   Moreover, one can verify that $\rho(1/r^2) \in \mathcal{C}^{\infty} (\mathbb{R})$, with $\partial_r^{k}|_{r = 0} \left[ \rho(1/r^2) \right] = 0$ for all $k \in \mathbb{N}$ and 
\[
| \partial_r^{k} \rho(1/r^2) | \leq C_k (1+ |r|^2)^{\frac{-2 - k}{2}} \,.
\]
Using this function, we define 
\begin{equation}\label{eq:hf1_a}
\begin{split}
\tilde{H}: T^*(\partial D)  \rightarrow & \mathbb{R}, \\ 
\tilde{H}(x, \xi) =& \rho( H(x, \xi)  ) \,.
\end{split}
\end{equation}
It can now be handily verified, under Assumption (A), that we have $\tilde{H} \in \mathcal{C}^{\infty} (T^*(\partial D))$, and in fact,  $\tilde{H} \in S^{-2} (T^*(\partial D))$, where $S^m (T^*(\partial D))$ denotes the smooth symbol class of order $m$ defined as
\beqnx
S^m (T^*(\partial D)) & := & \left \{ a: T^*(\partial D) \rightarrow \mathbb{C} \, ; \, a = \sum_{i} \phi_i F_i^* a_i,\ a_i \in S^m (U_i \times \mathbb{R}^{d-1} ) \right \}, \\
S^m (U_i \times \mathbb{R}^{d-1} ) & := & \bigg\{ a: U_i \times \mathbb{R}^{d-1} \rightarrow \mathbb{C} \, ; a \in \mathcal{C}^{\infty} (  U_i \times  \mathbb{R}^{d-1}   ) \\
& &\qquad | \partial_\xi^\alpha \partial_x^{\beta} a (x,\xi) | \leq C_{\alpha, \beta} (1+ |\xi|^2)^{\frac{m - |\alpha| }{2}} \bigg\} \,.
\eqnx

With the above notations, let us consider the following solution under a Hamiltonian flow:
\begin{equation}\label{eq:hf2}
\begin{cases}
\frac{\partial}{\partial t} a(t) &= -\frac{i}{h} \{ \tilde{H} , a (t) \}, \medskip\\
a(0) & = a_0 (x,\xi) \in S^m (T^*(\partial D)),
\end{cases}
\end{equation}
where
$ \{ \cdot , \cdot \}$ is the Poisson bracket defined by 
\beqnx
 \{ f , g\} := X_f \, g = - \omega(X_f, X_g),
\eqnx
with $X_f$ being the symplectic gradient vector field given by
\beqnx
\iota_{X_f} \, \omega =  d f \,.
\eqnx
It is noticed that, away from $\partial D \times \{0\}$, we have
\[
X_{\tilde H} = \rho'(H) X_{\tilde H}, 
\]
where $\rho'(H) > 0$, whereas $X_{\tilde H} = 0$ on $\partial D \times \{0\}$.
With this notion in hand, one readily sees $\frac{\partial}{\partial t} a = X_{\tilde H } a $, and moreover
$ a (t) = a_0 ( \gamma(t), p (t)) $ with
\beqnx
\begin{cases}
 \frac{\partial}{\partial t}  (\gamma(t), p(t) ) &= X_{\tilde H} (\gamma(t), p(t) ),\medskip  \\
(\gamma(0), p(0) ) & = (x, \xi ) \in T^* (\partial D).
\end{cases}
\eqnx
In what follows, we sometimes emphasize the dependence of $a$ on the initial value $(x,\xi)$ by writing
\beqnx
a_{x,\xi}(t) = a(t) \quad  \text{ with } \quad (\gamma(0), p(0) )  = (x, \xi ) \,.
\eqnx
Next we introduce the Heisenberg's picture and lift the above flow to the operator level via Egorov's theorem.  
Since this is a well-known theorem, we only provide a sketch of the proof for the sake of completeness.
\begin{Proposition} \cite{Hor1,Hor2,Ego}
Under Assumption (A), the following operator evolution equation
\begin{equation}\label{eq:ohf1}
\begin{cases}
\frac{\partial}{\partial t} A_h(t) = \frac{\mathrm{i}}{ h } \left[ \mathrm{Op}_{\tilde H,h} , A_h(t) \right],\medskip\\
A_h(0) = \mathrm{Op}_{a_0,h},
\end{cases}
\end{equation}
defines a unique Fourier integral operator (up to $h^{\infty} \, \Phi \mathrm{SO}_h^{-\infty} $) 
\beqnx
A_h(t) = e^{- \frac{\mathrm{i} t}{h}  \mathrm{Op}_{\tilde H,h}  } \, A_h(0) \, e^{ \frac{\mathrm{i} t}{h}  \mathrm{Op}_{\tilde H,h}  }  + O(h \, \Phi \mathrm{SO}_h^{\,m-1} )  
\eqnx
for $t < C \log (h)$.
Moreover,
\beqnx
A_h(t) =  \mathrm{Op}_{a (t) ,h} + \mathcal{O}(h \, \Phi \mathrm{SO}_h^{\,m-1} )  \,,
\eqnx
or that $ p_{A_h(t)}(x,\xi) = a_{x,\xi}(t)  + \mathcal{O}(|\xi|^{-1})\,.$
\end{Proposition}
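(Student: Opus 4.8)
The plan is to establish the statement in two stages: first prove existence and uniqueness of the solution $A_h(t)$ to the operator evolution equation \eqref{eq:ohf1}, then identify its principal symbol with $a(t)$ via an asymptotic expansion. For the first stage, I would use the fact that under Assumption (A) the Hamiltonian $H$ is nowhere zero, so $\tilde H = \rho(H) \in S^{-2}(T^*(\partial D))$ is a genuine smooth symbol and $\mathrm{Op}_{\tilde H, h}$ is bounded and self-adjoint up to $h\,\Phi\mathrm{SO}_h^{-3}$; hence $e^{-\frac{\mathrm{i}t}{h}\mathrm{Op}_{\tilde H,h}}$ is a well-defined one-parameter family of (nearly) unitary operators by the spectral theorem applied to the self-adjoint representative (the $\mathcal{S}_{\partial D}$-symmetrization discussed around \eqref{eq:s3}-\eqref{eq:s4} handles the self-adjointness issue). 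Conjugation then gives a solution $A_h(t) = e^{-\frac{\mathrm{i}t}{h}\mathrm{Op}_{\tilde H,h}}A_h(0)e^{\frac{\mathrm{i}t}{h}\mathrm{Op}_{\tilde H,h}}$, and uniqueness up to $h^\infty\,\Phi\mathrm{SO}_h^{-\infty}$ follows from Duhamel/Gronwall applied to the difference of two solutions, using the boundedness of $\mathrm{ad}_{\mathrm{Op}_{\tilde H,h}}$ on the relevant symbol classes.

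For the second stage, I would show that $B_h(t) := \mathrm{Op}_{a(t),h}$ is an approximate solution of \eqref{eq:ohf1}, i.e.\ it satisfies the evolution equation modulo $\mathcal{O}(h\,\Phi\mathrm{SO}_h^{m-1})$. The key computation is the symbol-calculus identity
\[
\frac{\mathrm{i}}{h}\bigl[\mathrm{Op}_{\tilde H,h},\mathrm{Op}_{a(t),h}\bigr] = \mathrm{Op}_{\{\tilde H,\,a(t)\},h} + \mathcal{O}(h\,\Phi\mathrm{SO}_h^{m-1}),
\]
which is the standard statement that the commutator of semiclassical pseudodifferential operators has principal symbol $\frac{1}{\mathrm{i}}$ times the Poisson bracket, the $1/h$ prefactor canceling the $h$ in the Moyal expansion; here one must check that the remainder is uniform because $\tilde H \in S^{-2}$ has decaying derivatives in $\xi$. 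Combined with $\frac{\partial}{\partial t}a(t) = \{\tilde H, a(t)\}$ — which is exactly \eqref{eq:hf2} read at the symbol level, recalling $\frac{\partial}{\partial t}a = X_{\tilde H}a = \{\tilde H, a\}$ — this shows $\frac{\partial}{\partial t}B_h(t) - \frac{\mathrm{i}}{h}[\mathrm{Op}_{\tilde H,h},B_h(t)] = \mathcal{O}(h\,\Phi\mathrm{SO}_h^{m-1})$, with $B_h(0) = \mathrm{Op}_{a_0,h} = A_h(0)$. Subtracting the equations for $A_h$ and $B_h$ and applying a Gronwall estimate on the $\Phi\mathrm{SO}_h^{m-1}$-norm of the difference then yields $A_h(t) - B_h(t) = \mathcal{O}(h\,\Phi\mathrm{SO}_h^{m-1})$, which is the asserted identity $A_h(t) = \mathrm{Op}_{a(t),h} + \mathcal{O}(h\,\Phi\mathrm{SO}_h^{m-1})$, hence also $p_{A_h(t)}(x,\xi) = a_{x,\xi}(t) + \mathcal{O}(|\xi|^{-1})$.

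The main obstacle is controlling the time range $t < C\log h$: the naive Gronwall bound accumulates a factor $e^{C|t|/h}$ unless one is careful, and the Egorov remainder is only well-behaved on the Ehrenfest time scale. I would handle this by iterating the symbol expansion to higher order and noting that each commutator gains a power of $h$ while the Hamiltonian flow (generated by the bounded, compactly-$\xi$-decaying field $X_{\tilde H}$, which vanishes near $\partial D\times\{0\}$) has derivatives growing at most exponentially in $t$; balancing $h^N$ against $e^{CNt/h}$-type growth forces the logarithmic cutoff $t < C\log(1/h)$, and on that range the composite remainder stays $\mathcal{O}(h\,\Phi\mathrm{SO}_h^{m-1})$. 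A secondary technical point worth flagging is that $\tilde H$ and $a(t)$ live in symbol classes on the manifold $\partial D$ defined via a partition of unity, so the commutator estimates must be patched together over the charts $U_i$ — this is routine but needs the cocycle-type bookkeeping already set up in Section~\ref{sect:2}.
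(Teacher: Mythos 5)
Your plan is essentially the standard Egorov argument and matches the spirit of the paper's sketch: build the approximate solution $\mathrm{Op}_{a(t),h}$ from the classical flow, use the commutator-equals-Poisson-bracket identity (with $\tilde H\in S^{-2}$ giving the needed uniformity), and compare with the exact conjugated operator $e^{-\frac{\mathrm{i}t}{h}\mathrm{Op}_{\tilde H,h}}A_h(0)e^{\frac{\mathrm{i}t}{h}\mathrm{Op}_{\tilde H,h}}$. The paper organizes the argument slightly differently: rather than a one-step Duhamel/Gronwall comparison at the principal level, it inductively constructs the \emph{full} asymptotic symbol of $A_h(t)$ (a parametrix expansion), bounds the cumulative remainder at each stage by Calder\'on--Vaillancourt, and then invokes Beals's theorem to identify the exact propagator conjugate as an FIO; the identification $A_h(t)=\mathrm{Op}_{a(t),h}+\mathcal{O}(h\,\Phi\mathrm{SO}_h^{m-1})$ is then obtained by comparing principal symbols and bounding the error operator (the ``Zygmund trick'' the paper cites plays the role your Gronwall step plays). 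Your single-step version is adequate for the $\mathcal{O}(h)$ conclusion stated, but the inductive construction is what one needs if the $h^\infty$ uniqueness claim is to be substantiated rather than asserted, so it is worth at least sketching one iteration of the parametrix construction rather than leaving it to Duhamel alone.

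Two smaller points. First, you lean on the spectral theorem for a ``self-adjoint representative'' of $\mathrm{Op}_{\tilde H,h}$; the paper instead gets the FIO structure of the propagator through Beals's theorem after the parametrix is in hand, which avoids having to modify $\mathrm{Op}_{\tilde H,h}$ itself. Either works, but if you go the spectral-theorem route you should say which self-adjoint operator you are exponentiating and verify it differs from $\mathrm{Op}_{\tilde H,h}$ only by an $\mathcal{O}(h)$ term that can be absorbed into the error. Second, in your Ehrenfest-time discussion the growth rate of derivatives along the flow is of the form $e^{CN|t|}$ (governed by the Lyapunov exponent of $X_{\tilde H}$), not $e^{CNt/h}$ as written; the $1/h$ there is a typo, and with the corrected exponent the balancing $h^N e^{CNt}\ll 1$ indeed gives $t<C'\log(1/h)$. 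This part of your plan is a genuine addition: the paper states the logarithmic time range but does not explain where it comes from.
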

\begin{proof} 
The existence of the solution to the equation \eqref{eq:ohf1} comes from first constructing the symbol in the principle level by noting that $[ \mathrm{Op}_{a} , \mathrm{Op}_{b} ] = \mathrm{Op}_{ \{a,b\}} + \mathcal{O}(h  \Phi\mathrm{SO}_h^{m+n-2} )$ if $a \in S^{m} ( T^*(\partial D))$ and $b \in S^{n} ( T^*(\partial D))$. Then one inductively constructs the full symbol, and bounds the error operator via the Calder\'on-Vaillancourt theorem repeatedly. By Beal's theorem, the operator is guaranteed as an FIO. 

The proof of both expressions of $A_h(t)$ comes from checking that the principle symbols coincide, and then using the Zygmund trick to bound the error operator.
\end{proof}

Let us consider $\tilde{H}(x,\xi)$ given as in \eqref{eq:hf1_a}, i.e. $ \tilde{H}(x,\xi)= \rho \left( [p_{\mathcal{K}^*_{\partial D,-1}}(x,\xi)  ]^2 \right) $.  Then we immediately have
\beqnx
 \mathrm{Op}_{H,h}  = \rho \left( [ \mathcal{K}^*_{h,\partial D} ]^2 \right) \text{ mod } (h \Phi\mathrm{SO}_h^{-3} ) \, ,
\eqnx
and hence the following corollary holds.

\begin{Corollary}
\label{congugation} Under Assumption (A), the symbol of
\beqn 
A_h(t) = e^{- \frac{\mathrm{i} t}{h}  \rho \left( [ \mathcal{K}^*_{h,\partial D} ]^2 \right)  }  A_h(0) e^{ \frac{\mathrm{i} t}{h} \rho \left( [ \mathcal{K}^*_{h,\partial D} ]^2 \right)}  + \mathcal{O}(h \Phi \mathrm{SO}_h^{-1} )  \,
\eqn
is given by
\beqn
 p_{A_h(t)}(x,\xi) = a_{x,\xi}(t)  + \mathcal{O}(|\xi|^{-1})\,.
\eqn
\end{Corollary}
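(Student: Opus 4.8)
The plan is to derive the statement directly from the preceding Proposition by substituting the particular choice of symbol $\tilde H(x,\xi) = \rho\bigl([p_{\mathcal{K}^*_{\partial D},-1}(x,\xi)]^2\bigr)$ into the operator evolution equation \eqref{eq:ohf1}, and then identifying $\mathrm{Op}_{\tilde H,h}$ with $\rho\bigl([\mathcal{K}^*_{h,\partial D}]^2\bigr)$ up to a lower-order error. First I would recall from Section \ref{sect:2} that the semiclassical operator $\mathcal{K}^*_{h,\partial D} = \tfrac1h |D|^{-1/2}\mathcal{K}^*_{\partial D}|D|^{1/2}$ has principal symbol $p_{\mathcal{K}^*_{\partial D},-1}(x,\xi)$ (scaled appropriately in the semiclassical variable), so that by the semiclassical functional calculus for the self-adjoint operator $\mathcal{K}^*_{h,\partial D}$ (self-adjoint modulo $h\,\Phi\mathrm{SO}_h^{-2}$, as established above), one has
\[
\rho\bigl([\mathcal{K}^*_{h,\partial D}]^2\bigr) = \mathrm{Op}_{\tilde H,h} \ \text{ mod }\ h\,\Phi\mathrm{SO}_h^{-3}.
\]
This is exactly the identity stated in the excerpt just before the Corollary; its justification rests on the smoothness and symbol estimates for $\rho(1/r^2)$ recorded earlier (so that $\tilde H \in S^{-2}$ under Assumption (A)), combined with the Helffer--Sj\"ostrand formula or the standard composition calculus giving $\rho(\mathrm{Op}_{b,h}) = \mathrm{Op}_{\rho(b),h} + \mathcal{O}(h\,\Phi\mathrm{SO}_h^{\cdot})$ for symbols in the appropriate class.

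Next I would feed this identification into the Proposition. Replacing $\mathrm{Op}_{\tilde H,h}$ by $\rho\bigl([\mathcal{K}^*_{h,\partial D}]^2\bigr)$ in the propagator $e^{-\mathrm{i}t\,\mathrm{Op}_{\tilde H,h}/h}$ introduces an error: since the two operators differ by an element of $h\,\Phi\mathrm{SO}_h^{-3}$, a Duhamel expansion for the difference of the two unitary groups shows that the conjugated operator changes only by $\mathcal{O}(h\,\Phi\mathrm{SO}_h^{-1})$ for $t < C\log(1/h)$, which is absorbed into the error term already present in the Proposition's conclusion. Hence $A_h(t)$ in \eqref{eq:ohf1} admits the representation
\[
A_h(t) = e^{-\frac{\mathrm{i}t}{h}\rho([\mathcal{K}^*_{h,\partial D}]^2)}\,A_h(0)\,e^{\frac{\mathrm{i}t}{h}\rho([\mathcal{K}^*_{h,\partial D}]^2)} + \mathcal{O}(h\,\Phi\mathrm{SO}_h^{-1}),
\]
and the Proposition's symbol formula $p_{A_h(t)}(x,\xi) = a_{x,\xi}(t) + \mathcal{O}(|\xi|^{-1})$ carries over verbatim, since the $\mathcal{O}(h\,\Phi\mathrm{SO}_h^{-1})$ correction contributes only to the lower-order symbol. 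This yields both displayed equations of the Corollary.

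The only genuinely delicate point is controlling the functional calculus error and the Duhamel propagation of errors uniformly over the Ehrenfest-type time window $t < C\log(1/h)$: one must check that the $h\,\Phi\mathrm{SO}_h^{-3}$ discrepancy between $\mathrm{Op}_{\tilde H,h}$ and $\rho([\mathcal{K}^*_{h,\partial D}]^2)$ does not amplify under conjugation faster than the accumulated $\mathcal{O}(h\,\Phi\mathrm{SO}_h^{-1})$ budget allows — this is handled exactly by the same Zygmund/Beals argument invoked in the proof of the Proposition, applied to the difference of the two flows. Everything else is bookkeeping: matching orders in the symbol class, noting that $\rho' > 0$ so the Hamiltonian vector fields $X_{\tilde H}$ and $X_H$ have the same integral curves up to time reparametrization (already recorded before the Proposition), and confirming that the characteristic set $\overline{\{H=1\}}$ stays away from $\partial D \times \{0\}$ under Assumption (A) so that all symbols involved are genuinely smooth where needed.
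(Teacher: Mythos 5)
Your proposal is correct and follows essentially the same route as the paper: the paper's own derivation consists of noting that $\mathrm{Op}_{\tilde H,h} = \rho\bigl([\mathcal{K}^*_{h,\partial D}]^2\bigr) \bmod h\,\Phi\mathrm{SO}_h^{-3}$ via the functional calculus and then invoking the preceding Proposition, which is exactly your chain of reasoning. You spell out the Duhamel/error-propagation bookkeeping that the paper leaves implicit, but the core argument is identical.
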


\subsection{Trace formula and generalized Weyl's law}    

We first state the Schwartz functional calculus as follows. 
\begin{Lemma} \cite{Hor1,Hor2}
Let $\mathcal{S}(\mathbb{R})$ denote the space of Schwartz functions on $\mathbb{R}$. Then for $f \in \mathcal{S}(\mathbb{R})$, $f ( \mathrm{Op}_{a,h} ) \in \Phi \mathrm{SO}_h^{-\infty}$ and 
\begin{equation}\label{eq:sfc1}
 f ( \mathrm{Op}_{a,h} ) = \mathrm{Op}_{f ( a )} + \mathcal{O}( h \Phi \mathrm{SO}_h^{-\infty} ).
\end{equation}
\end{Lemma}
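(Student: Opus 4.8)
The plan is to use the Helffer--Sjöstrand functional calculus together with a semiclassical parametrix construction for the resolvent of $\mathrm{Op}_{a,h}$. First I would fix an almost analytic extension $\tilde f \in \mathcal{C}^\infty_c(\mathbb{C})$ of $f$, that is $\tilde f|_{\mathbb{R}} = f$ and $|\bar\partial \tilde f(z)| \le C_N |\mathrm{Im}\, z|^N$ for every $N \in \mathbb{N}$; such an extension exists because $f \in \mathcal{S}(\mathbb{R})$ (one may even take it compactly supported modulo a rapidly negligible contribution, using the decay of $f$). The functional calculus then reads
\[
f(\mathrm{Op}_{a,h}) = -\frac{1}{\pi}\int_{\mathbb{C}} \bar\partial\tilde f(z)\,(z - \mathrm{Op}_{a,h})^{-1}\, dL(z),
\]
with $dL$ Lebesgue measure on $\mathbb{C}$, the operator being well defined for $\mathrm{Im}\, z \neq 0$ since $\mathrm{Op}_{a,h}$ is, up to $\mathcal{O}(h\,\Phi\mathrm{SO}_h^{\,m-1})$, self-adjoint with real-valued principal symbol $a$.

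Next I would construct, for $\mathrm{Im}\, z \neq 0$, a parametrix $B_h(z) = \mathrm{Op}_{b(z;\,\cdot\,,h),h}$ for $(z - \mathrm{Op}_{a,h})^{-1}$. The leading symbol is $b_0(z;x,\xi) = (z - a(x,\xi))^{-1}$, and the symbolic calculus (the composition formula $\mathrm{Op}_c\,\mathrm{Op}_d = \mathrm{Op}_{cd} + \mathcal{O}(h\,\Phi\mathrm{SO}_h)$ and its refinements, exactly as used in the proof of the Egorov proposition above) produces correcting symbols $b_j(z;x,\xi)$, $j \ge 1$, each a finite sum of terms of the form (polynomial in the $x,\xi$-derivatives of $a$) $\times (z-a)^{-k}$ with $k$ bounded in terms of $j$, such that $(z - \mathrm{Op}_{a,h})\,\mathrm{Op}_{\sum_{j<N} h^j b_j,h} = \mathrm{Id} + \mathcal{O}(h^N \Phi\mathrm{SO}_h^{-N})$. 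Asymptotically summing, $b(z;\,\cdot\,,h) \sim \sum_j h^j b_j$ is a genuine symbol and a Neumann-series argument gives $(z - \mathrm{Op}_{a,h})^{-1} = \mathrm{Op}_{b(z;\,\cdot\,,h),h} + \mathcal{O}(h^\infty \Phi\mathrm{SO}_h^{-\infty})$. The crucial bookkeeping is that every symbol seminorm of $b_j(z;\,\cdot\,,h)$ is bounded by $C_{j,\alpha,\beta}\,|\mathrm{Im}\, z|^{-M_{j,\alpha,\beta}}$ for explicit exponents $M$, uniformly for $z$ in the (essentially bounded) support of $\bar\partial\tilde f$, and the same for the remainder after the Calderón--Vaillancourt bound.

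Finally I would insert this expansion into the Helffer--Sjöstrand integral. Each integrand $\bar\partial\tilde f(z)\,b_j(z;x,\xi)$ is absolutely integrable, because the infinite-order vanishing of $\bar\partial\tilde f$ at the real axis beats the polynomial blow-up $|\mathrm{Im}\, z|^{-M}$; hence $f(\mathrm{Op}_{a,h}) = \mathrm{Op}_{\sigma(x,\xi,h),h} + \mathcal{O}(h^\infty \Phi\mathrm{SO}_h^{-\infty})$ with $\sigma \sim \sum_j h^j \sigma_j$ and $\sigma_j(x,\xi) = -\frac1\pi\int_{\mathbb{C}} \bar\partial\tilde f(z)\,b_j(z;x,\xi)\,dL(z)$. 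The leading symbol is identified by the scalar Helffer--Sjöstrand formula $\sigma_0(x,\xi) = -\frac1\pi\int_{\mathbb{C}} \bar\partial\tilde f(z)(z-a(x,\xi))^{-1}\,dL(z) = f(a(x,\xi))$, proved by Stokes' theorem after excising a small disc about $z = a(x,\xi)$; since every $b_j$ with $j\ge1$ carries a factor $h$, the remaining terms contribute only $\mathcal{O}(h\,\Phi\mathrm{SO}_h^{-\infty})$, and the smoothing of $f(a)$ itself is read off from $a \in \widetilde S^m$ with $m < 0$ (so $a \to 0$ at fibre infinity) together with the rapid decay of $f$. This yields \eqref{eq:sfc1}.

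The main obstacle is the uniform-in-$z$ symbol control of the resolvent parametrix near the real axis: one must track, step by step in the symbolic iteration and in the operator-norm estimate of the remainder, the precise powers of $|\mathrm{Im}\, z|^{-1}$ that accumulate, so that the final integral converges and lands in $\Phi\mathrm{SO}_h^{-\infty}$. Everything else is routine once this polynomial control is secured; as an alternative, when $\mathrm{Op}_{a,h}$ is self-adjoint one can bypass the resolvent entirely by writing $f(\mathrm{Op}_{a,h}) = \frac{1}{2\pi}\int_{\mathbb{R}} \hat f(t)\,e^{it\,\mathrm{Op}_{a,h}}\,dt$ and using the Egorov-type description of $e^{it\,\mathrm{Op}_{a,h}}$ from the previous subsection together with stationary phase, but the Helffer--Sjöstrand route is the more economical one here.
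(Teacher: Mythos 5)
Your proposal is correct and takes essentially the same route as the paper: both invoke the Helffer--Sj\"ostrand formula with an almost analytic (almost holomorphic) extension of $f$, the paper merely stating this in a one-line sketch while you fill in the standard resolvent-parametrix construction and the bookkeeping of $|\mathrm{Im}\,z|^{-1}$ powers that make the integral converge.
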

\begin{proof}
The theorem can be proved via an almost holomorphic extension of $f$ to $f^{\mathbb{C}}$ (e.g. by H\"ormander \cite{Hor1,Hor2}) and the Helffer-Sj\"ostrand formula
$
f(A) = \frac{1}{2 \pi \mathrm{i}} \int_{C} \partial_{\overline{z}} f^{\mathbb{C}} (z - A)^{-1} dz \wedge d \overline{z} \,.
$
\end{proof}

We proceed to state the following trace theorem, and again give only a brief sketch of the proof for the sake of completeness.
\begin{Proposition}\cite{Hor1,Hor2,trace,erg1,Zel,erg3,erg2}
Given $a \in S^m(T^*(\partial D))$, if $\mathrm{Op}_{a,h} $ is in the trace class and $f \in \mathcal{S}(\mathbb{R})$, then
\beqnx
(2 \pi h )^{ (d-1)} \mathrm{tr} ( f(  \mathrm{Op}_{a,h}  ) ) =   \int_{T^*(\partial D)} f( a ) \, d \sigma \otimes d \sigma^{-1} + \mathcal{O}(h) \, ,
\eqnx
where $d \sigma \otimes d \sigma^{-1}$ is the Liouville measure given by the top form $\omega^{d-1} / (d-1) !$.
\end{Proposition}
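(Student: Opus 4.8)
The plan is to establish the trace formula by reducing it, via the Schwartz functional calculus just proved, to a statement about the integral kernel of a pseudodifferential operator in the semiclassical calculus, and then to compute the diagonal part of that kernel to leading order. First I would invoke the preceding lemma to write $f(\mathrm{Op}_{a,h}) = \mathrm{Op}_{f(a),h} + \mathcal{O}(h\,\Phi\mathrm{SO}_h^{-\infty})$; since the error term is a semiclassical pseudodifferential operator of order $-\infty$ with an extra factor of $h$, its trace is $\mathcal{O}(h)$ (indeed $\mathcal{O}(h^\infty)$ after accounting for the $(2\pi h)^{d-1}$ normalization, but $\mathcal{O}(h)$ suffices), so it is enough to prove the formula with $f(a)$ in place of $f(a)$ as the symbol, i.e. for $\mathrm{Op}_{b,h}$ with $b = f(a) \in \mathcal{S}$ in the $\xi$ variable (because $f \in \mathcal{S}(\mathbb{R})$ and $a$ has at most polynomial growth, $f(a)$ and all its derivatives decay rapidly in $\xi$; this is what makes the operator trace class and the integral convergent).

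Next I would compute the trace of $\mathrm{Op}_{b,h}$ directly from its Schwartz kernel. Using a partition of unity $\sum_i \psi_i = 1$ subordinate to the charts $U_i$ and the trivializations $F_i$ already fixed in Section~\ref{sect:2}, it suffices to work in a single coordinate chart, where $\mathrm{Op}_{b,h}$ has kernel
\[
K_h(x,y) = \frac{1}{(2\pi h)^{d-1}} \int_{\mathbb{R}^{d-1}} e^{\frac{\mathrm{i}}{h}\langle x-y,\xi\rangle}\, b(x,\xi)\, d\xi .
\]
Taking the trace means integrating $K_h(x,x)$ against the surface measure $d\sigma(x)$ on $\partial D$; the phase vanishes on the diagonal, so
\[
\mathrm{tr}(\mathrm{Op}_{b,h}) = \frac{1}{(2\pi h)^{d-1}} \int \!\!\! \int b(x,\xi)\, d\xi\, d\sigma(x) + (\text{lower order}),
\]
and multiplying by $(2\pi h)^{d-1}$ gives exactly $\int_{T^*(\partial D)} f(a)\, d\sigma\otimes d\sigma^{-1}$ as the leading term, since $d\sigma \otimes d\sigma^{-1} = \omega^{d-1}/(d-1)!$ is precisely $dx\,d\xi$ in local symplectic (Darboux) coordinates. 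The $\mathcal{O}(h)$ remainder comes from the subprincipal contributions: expanding the kernel via stationary phase / the standard symbolic expansion of the composition and the curvature terms relating the coordinate measure to $d\sigma$, each correction carries a factor of $h$ relative to the leading term, and one controls the resulting symbols using that $b \in S^{-\infty}$ in $\xi$ so all the $\xi$-integrals converge uniformly.

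The main obstacle is the careful bookkeeping of the remainder uniformly in $h$: one must check that the off-diagonal behavior of $K_h$ does not contribute (a non-stationary phase / integration-by-parts argument in $\xi$ exploiting rapid decay of $b$ and its derivatives), that the partition-of-unity pieces patch together without generating spurious $\mathcal{O}(1)$ terms (the overlaps contribute only to lower order because the full symbol is defined modulo $h\,\Phi\mathrm{SO}_h^{m-1}$), and that the passage from the flat coordinate integral to the invariant integral against the Liouville measure introduces only an $\mathcal{O}(h)$ error — this last point is where the smoothness of $\partial D$ and the boundedness of the geometric quantities (the implied constants depending on $\|\mathbb{X}\|_{\mathcal{C}^2}$, as in Section~\ref{sect:2}) enter. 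Since all of this is standard semiclassical analysis (H\"ormander, and the trace arguments of Zelditch and Colin de Verdi\`ere), I would present it as a sketch, citing \cite{Hor1,Hor2,trace} for the kernel estimates and deferring the routine computation.
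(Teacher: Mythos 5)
Your proposal is correct and follows essentially the same route as the paper: invoke the preceding functional-calculus lemma to replace $f(\mathrm{Op}_{a,h})$ by $\mathrm{Op}_{f(a),h}$ up to an $\mathcal{O}(h)$ error in the rescaled trace, then compute the trace by integrating the Schwartz kernel on the diagonal in local coordinates, identifying $\int\!\!\int b(x,\xi)\,d\xi\,d\sigma(x)$ with the Liouville integral and relegating the remaining corrections (subprincipal terms, change of measure, quantization convention) to $\mathcal{O}(h)$. The paper passes through the Weyl quantization and then notes the discrepancy with the right quantization $\mathrm{Op}^R_{a,h}$ is higher order, whereas you work with the right quantization directly; this is a cosmetic difference. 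One small inaccuracy: your parenthetical claim that the error is $\mathcal{O}(h^\infty)$ after the $(2\pi h)^{d-1}$ normalization overstates what the stated lemma gives — the error $h\,\Phi\mathrm{SO}_h^{-\infty}$ contributes $\mathcal{O}(h)$ to the rescaled trace, which is what you correctly fall back on.
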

\begin{proof}
For notational convenience, we first consider the Weyl quantization $\mathrm{Op}^{w}_{a,h}$ instead.
We have from the Schwartz kernel theorem that
\[
 [\mathrm{Op}^{w}_{f(a),h} (\phi) ] (x) = \int_{\partial D} K_h (x,y) \phi(y) d \sigma(y)
\]
for some $ K_h (x,y) \in \mathcal{D}' (\partial D \times \partial D )$, which actually possesses the following explicit expression
\[
(2 \pi h )^{ (d-1)} K_h (x,y) =   \mathcal{F}_h \left[  a\left(\frac{ x+y}{2} , \cdot \right) \right] (x-y) + \mathcal{O}(h) \,,
\]
via the partition of unity and the local trivialisation (by an abuse of notation).  Then from the functional calculus we have
\beqnx
\text{tr} (  f (\mathrm{Op}^{w}_{a,h} )  ) = \mathrm{tr} (   \mathrm{Op}^{w}_{f(a),h}   )  = \int_{\partial D} K_h(x,x) d \sigma \,.
\eqnx
To conclude our theorem, we notice that the Weyl quantization $\text{Op}^{w}_{a,h}$ and left/right quantizations $\text{Op}^{L/R}_{a,h}$ differ only in the higher order term after an application of the operator $\exp\left(\pm i \frac{h}{2} \partial_x \partial_\xi\right)$, and our choice of quantization here is $\text{Op}_{a,h} := \text{Op}^{R}_{a,h}$.
\end{proof}
\noindent 
Combining functional calculus and trace theorem gives the following generalized Weyl's law, together with the fact that $ ( \lambda_i^2(h),  \phi_i(h) )$ is an eigenpair of $ [ \mathcal{K}^*_{h,\partial D}  ]^2$ if and only if $ ( \rho\left( \lambda_i^2(h) \right),  \phi_i(h) )$ is an eigenpair of $ \rho \left( [ \mathcal{K}^*_{h,\partial D}  ]^2 \right)$.

\begin{Proposition} \cite{Hor1,Hor2,trace,erg1,Zel,erg3,erg2}
Under Assumption (A), fixing $r \leq s$, for any $a \in S^{m}(T^*(\partial D))$, we have as $h\rightarrow+0$,
\beqn
(2 \pi h )^{ (d-1)}  \sum_{r \leq \lambda_i^2 (h) \leq s} c_i  \, \langle  \mathrm{Op}_{a,h}  \, \phi_i(h) , \phi_i(h) \rangle_{L^2(\partial D, d \sigma)}  =  \int_{ \{ r \leq H \leq  s\} } a  \, d \sigma \otimes d \sigma^{-1} + o_{r,s}(1),
\label{weyl}
\eqn
where $c_i := | \phi_i |_{H^{-\frac{1}{2}}(\partial \Omega, d \sigma)}^{-2} $ is the $H^{-\frac{1}{2}}$ semi-norm and the little-$o$ depends on $r,s$.
\end{Proposition}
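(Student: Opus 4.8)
\medskip
\noindent The plan is to derive \eqref{weyl} by combining the Schwartz functional calculus \eqref{eq:sfc1}, the trace formula, and a sandwiching of the indicator $\mathbf{1}_{[r,s]}$ of the spectral window between Schwartz functions; I assume $0<r\le s$, the case $r=0$ being degenerate. First I would pass to a genuinely self-adjoint model: by the Kelley symmetrization \eqref{eq:s3}, $[\mathcal{K}^*_{h,\partial D}]^2$ is conjugate, through $\mathcal{S}_{\partial D}^{1/2}$, to a self-adjoint operator $[\widetilde{\mathcal{K}}_h]^2$ on $L^2(\partial D,d\sigma)$ with the same eigenvalues $\lambda_i^2(h)$, eigenfunctions $\mathcal{S}_{\partial D}^{1/2}\phi_i$, and the same principal symbol, so that $\rho([\widetilde{\mathcal{K}}_h]^2)=\mathrm{Op}_{\tilde H,h}+\mathcal{O}(h\,\Phi\mathrm{SO}_h^{-3})$ with $\tilde H\in S^{-2}(T^*(\partial D))$ by Corollary~\ref{congugation} and \eqref{eq:hf1_a}. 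Using $\mathcal{S}_{\partial D}=\tfrac12|D|^{-1}$ modulo $\Phi\mathrm{SO}^{-2}$ from \eqref{eq:s4} and the equivalence of the norms $\|\cdot\|_{S_{\partial D}}$, $\|\cdot\|_{H^{-1/2}(\partial D,d\sigma)}$ and $\phi\mapsto\bigl\||D|^{-1/2}\phi\bigr\|_{L^2}$, one checks that the weights $c_i=|\phi_i|_{H^{-1/2}}^{-2}$ are exactly what converts the $S_{\partial D}$-orthonormal spectral expansion of $\widetilde{\mathcal{K}}_h$ into the $L^2$-pairing against $\mathrm{Op}_{a,h}$; hence, writing $E_{[r,s]}$ for the spectral projection of $[\widetilde{\mathcal{K}}_h]^2$ onto $[r,s]$, the left-hand side of \eqref{weyl} equals $(2\pi h)^{d-1}\,\mathrm{tr}\!\big(E_{[r,s]}\,\mathrm{Op}_{a,h}\big)+o_{r,s}(1)$.

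Next, for $g\in\mathcal{S}(\mathbb{R})$ supported in a compact subinterval of $(0,1)$, the functional calculus \eqref{eq:sfc1} gives $g\!\big(\rho([\widetilde{\mathcal{K}}_h]^2)\big)=\mathrm{Op}_{g(\tilde H),h}+\mathcal{O}(h\,\Phi\mathrm{SO}_h^{-\infty})$, and the symbol $g(\tilde H)(x,\xi)=g(\rho(H(x,\xi)))$ is compactly supported in $T^*(\partial D)$: indeed $H\to0$ as $|\xi|\to\infty$, while under Assumption (A) one has $|p_{\mathcal{K}^*_{\partial D},-1}(x,\xi)|\ge c|\xi|^{-1}$ for $|\xi|$ small, so $H\to+\infty$ and $\rho(H)\to1$ as $\xi\to0$, which forces $g(\rho(H))$ to vanish both for $|\xi|$ large and for $|\xi|$ small. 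Therefore $g\!\big(\rho([\widetilde{\mathcal{K}}_h]^2)\big)\mathrm{Op}_{a,h}=\mathrm{Op}_{g(\tilde H)a,h}+\mathcal{O}(h\,\Phi\mathrm{SO}_h^{-\infty})$ is trace class, and the trace formula yields
\[
(2\pi h)^{d-1}\,\mathrm{tr}\!\Big(g\!\big(\rho([\widetilde{\mathcal{K}}_h]^2)\big)\mathrm{Op}_{a,h}\Big)=\int_{T^*(\partial D)}g(\rho(H))\,a\;d\sigma\otimes d\sigma^{-1}+\mathcal{O}(h).
\]
Since $\rho$ is a strictly increasing bijection of $\mathbb{R}_+$ onto $[0,1)$, the bounds $g_\varepsilon^-\le\mathbf{1}_{[\rho(r),\rho(s)]}\le g_\varepsilon^+$ entail $g_\varepsilon^-(\rho(\lambda_i^2(h)))\le\mathbf{1}_{[r,s]}(\lambda_i^2(h))\le g_\varepsilon^+(\rho(\lambda_i^2(h)))$ for all $i$.

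Finally I would sandwich: take $g_\varepsilon^{\pm}\in\mathcal{S}(\mathbb{R})$ as above, supported in a fixed compact subinterval of $(0,1)$, equal to $1$ on $[\rho(r)+\varepsilon,\rho(s)-\varepsilon]$ and vanishing off $[\rho(r)-\varepsilon,\rho(s)+\varepsilon]$. The coefficients $\mathbf{1}_{[r,s]}(\lambda_i^2(h))-g_\varepsilon^-(\rho(\lambda_i^2(h)))\in[0,1]$ are supported on the two thin windows $W_\varepsilon$ around the level sets $\{H=r\}$ and $\{H=s\}$, so the corresponding trace difference is at most $\|\mathrm{Op}_{a,h}E_{W_\varepsilon}\|_{\mathrm{op}}\cdot\#\{i:\lambda_i^2(h)\in W_\varepsilon\}$, where $(2\pi h)^{d-1}\#\{i:\lambda_i^2(h)\in W_\varepsilon\}\to(\text{Liouville volume of }W_\varepsilon)$ by the trace formula applied with $a\equiv1$, and $\|\mathrm{Op}_{a,h}E_{W_\varepsilon}\|_{\mathrm{op}}=\mathcal{O}(1)$ because $W_\varepsilon$ is a compact subset of $T^*(\partial D)$ avoiding the zero section and infinity. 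Letting $h\to0$ and then $\varepsilon\to0$, the Liouville volume of $W_\varepsilon$ tends to $0$ (the level sets $\{H=r\}$, $\{H=s\}$ carrying no Liouville mass, which we may assume, e.g. at regular values of $H$), while $\int g_\varepsilon^{\pm}(\rho(H))\,a\to\int_{\{r\le H\le s\}}a\;d\sigma\otimes d\sigma^{-1}$; combining this with the identification of the first step proves \eqref{weyl}.

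\medskip
\noindent The main obstacle is the first step: because $\mathcal{K}^*_{h,\partial D}$ is self-adjoint only modulo $h\,\Phi\mathrm{SO}_h^{-2}$ and, after symmetrization, only with respect to a weighted inner product that differs from $L^2$ by a lower-order (not $h$-small) pseudodifferential weight, the normalized eigenvalue sum in \eqref{weyl} must be matched to a bona fide $L^2$-trace through the exact bookkeeping of the constants $c_i$; the secondary difficulty is the endpoint estimate, for which one needs the level sets $\{H=r\}$, $\{H=s\}$ to be Liouville-null.
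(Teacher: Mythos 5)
Your proof follows essentially the same route as the paper's: approximate the spectral window by Schwartz functions, apply the functional calculus to $\rho([\mathcal{K}^*_{h,\partial D}]^2)$, invoke the semiclassical trace formula, and pass to the limit. The paper compresses all of this into a few lines and, in particular, appears to take $\varepsilon\to 0$ at fixed $h$ even though the remainder in \eqref{eq:wyl1} is $\mathcal{O}_{r,s,\varepsilon}(h)$; your two-sided sandwich with $h\to 0$ taken first and then $\varepsilon\to 0$ is the careful way to close that loop, and the observation that the two boundary windows around $\{H=r\}$, $\{H=s\}$ are Liouville-null (which holds automatically here because under Assumption (A) one has $dH\neq 0$ on these level sets, by the same computation as Lemma~\ref{lemma_well}) is exactly what makes the sandwich converge. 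Your first step is also the right unpacking of the paper's terse closing remark ``$\|\phi_i(h)\|_{L^2}=|\phi_i|_{H^{-1/2}}$'': one should pass through a genuinely self-adjoint conjugate model, for which the eigenfunctions are honestly $L^2$-orthonormal, and then relate it back to the pairing against $|D|^{-1/2}\phi_i$. You correctly flag this translation as the delicate point—the conjugating operators $\mathcal{S}_{\partial D}^{1/2}$ and $\tfrac{1}{\sqrt2}|D|^{-1/2}$ agree only modulo lower-order (not $h$-small) $\Phi\mathrm{SO}$, and one must check that the discrepancy contributes only $o_{r,s}(1)$ after the $(2\pi h)^{d-1}$ normalization, using that the eigenfunctions in the window $r\le\lambda_i^2(h)\le s$ are microlocalized at $|\xi|\sim h^{-1/2}$. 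So: same key ideas, with two spots (limit order, $c_i$-to-trace bookkeeping) spelled out in more detail than the paper gives; if you finish the order-$(-3/2)$ remainder estimate in the first step, your write-up would actually be more self-contained than the paper's.
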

\begin{proof}
Take $ f_{\varepsilon} \left( \rho\left( [ \mathcal{K}^*_{h,\partial D} ]^2 \right) \right)$ where $f_{\varepsilon} \in \mathcal(S)$ approximating $\chi_{[\rho(r),\rho(s)]}$. Then $f_{\varepsilon} \left( \rho\left( [ \mathcal{K}^*_{h,\partial D} ]^2 \right) \right) \in \Phi \text{SO}_h^{-\infty}$ by the functional calculus with the trace formula 
\begin{equation}\label{eq:wyl1}
\begin{split}
& (2 \pi h )^{(d-1)}  \text{tr} \left( _{\varepsilon} \left( \rho\left( [ \mathcal{K}^*_{h,\partial D} ]^2 \right) \right) \, \text{Op}_{a,h}   \, _{\varepsilon} \left( \rho\left( [ \mathcal{K}^*_{h,\partial D} ]^2 \right) \right) \right)\\
 =&  \int_{T^*(\partial D)} a f_{\varepsilon}^2( \rho( H ) )  \, d \sigma \otimes d \sigma^{-1} + \mathcal{O}_{r,s,\varepsilon}(h) \,,
\end{split}
\end{equation}
where $\mathcal{O}$ depends on $r,s,\varepsilon$.
Passing $\varepsilon$ to $0$ in \eqref{eq:wyl1}, $ f_{\varepsilon} \left( \rho\left( [ \mathcal{K}^*_{h,\partial D} ]^2 \right) \right)$ converges to the spectral projection operator, and $f_\varepsilon^2 (\rho(H) )$ converges to $\chi_{ \{ \rho(r) \leq \rho(H) \leq  \rho(s) \} } = \chi_{ \{ r \leq H \leq  s\} }$, which readily yields \eqref{weyl}.  Finally, we notice that $
 \| \phi_i (h) \|_{L^{2}(\partial D , d \sigma)} =  \| \phi_i \|_{H^{-\frac{1}{2}}(\partial D, d \sigma)} $.
\end{proof}

We would like to point out that if taking $a =1$ in \eqref{weyl}, it leads us back to the well-known Weyl's law:
\begin{Corollary} \cite{Hor1,Hor2,trace,erg1,Zel,erg3,erg2} Under Assumption (A), we have
\beqn
 \sum_{r \leq \lambda_i^2 (h) \leq s}  1  =  (2 \pi h )^{ 1-d }  \int_{ \{ r \leq H \leq  s\} } \, d \sigma \otimes d \sigma^{-1} + o_{r,s}(h^{1-d}) \,.
\label{weyl2}
\eqn
\end{Corollary}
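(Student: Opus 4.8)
The plan is to obtain \eqref{weyl2} as the special case $a\equiv 1$ of the generalized Weyl's law \eqref{weyl}, after clearing the normalization factors that distinguish the two statements. First I would substitute $a=1$ into \eqref{weyl}; since $\mathrm{Op}_{1,h}=\mathrm{Id}$, the left-hand side collapses to $(2\pi h)^{d-1}\sum_{r\le\lambda_i^2(h)\le s}c_i\,\langle\phi_i(h),\phi_i(h)\rangle_{L^2(\partial D,d\sigma)}$, while the right-hand side becomes $\int_{\{r\le H\le s\}}d\sigma\otimes d\sigma^{-1}+o_{r,s}(1)$.

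Next I would invoke the two identifications recorded just before the Weyl's law. By \eqref{eq:eg1} and the discussion around \eqref{eq:s4}, $\phi_i(h)=|D|^{-1/2}\phi_i$, and the intertwining relation there gives $\|\phi_i(h)\|_{L^2(\partial D,d\sigma)}=\|\phi_i\|_{H^{-1/2}(\partial D,d\sigma)}$, which is noted explicitly at the end of the proof of Proposition~3.8. Since $c_i:=\|\phi_i\|_{H^{-1/2}(\partial D,d\sigma)}^{-2}$, we get the exact cancellation $c_i\,\langle\phi_i(h),\phi_i(h)\rangle_{L^2(\partial D,d\sigma)}=c_i\,\|\phi_i\|_{H^{-1/2}(\partial D,d\sigma)}^2=1$ for every $i$. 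Hence the left-hand side of \eqref{weyl} with $a=1$ equals $(2\pi h)^{d-1}\sum_{r\le\lambda_i^2(h)\le s}1$, i.e. $(2\pi h)^{d-1}$ times the eigenvalue counting function.

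Finally I would divide both sides by $(2\pi h)^{d-1}$: the counting sum equals $(2\pi h)^{1-d}\int_{\{r\le H\le s\}}d\sigma\otimes d\sigma^{-1}+(2\pi h)^{1-d}o_{r,s}(1)$, and absorbing the factor $(2\pi h)^{1-d}$ into the error term rewrites $(2\pi h)^{1-d}o_{r,s}(1)$ as $o_{r,s}(h^{1-d})$, which is exactly \eqref{weyl2}. It should also be remarked that Assumption~(A) guarantees, via the observation that $H\neq 0$ everywhere and that $\overline{\{H=1\}}$ stays away from $\partial D\times\{0\}$, that the sets $\{r\le H\le s\}$ have finite Liouville measure (so the right-hand side is well-defined and the number of eigenvalues in each band is finite), which is what makes the manipulation legitimate.

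The only mildly delicate point — and it is not really an obstacle, since it is already established in the excerpt — is the bookkeeping of the weights $c_i$ versus the norms under the conjugation $\phi_i\mapsto|D|^{-1/2}\phi_i$; one must be careful that the $L^2$ norm of the rescaled eigenfunction is measured against the surface measure $d\sigma$ and coincides with the $H^{-1/2}$ semi-norm of the original density, so that the product $c_i\langle\mathrm{Op}_{1,h}\phi_i(h),\phi_i(h)\rangle$ is identically $1$ rather than merely bounded. Once that identity is in hand, the corollary is immediate.
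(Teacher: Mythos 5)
Your proposal is correct and follows exactly the paper's route: the paper obtains \eqref{weyl2} precisely by setting $a=1$ in the generalized Weyl's law \eqref{weyl}, using that $\mathrm{Op}_{1,h}=\mathrm{Id}$ and the identity $\|\phi_i(h)\|_{L^2(\partial D,d\sigma)}=\|\phi_i\|_{H^{-1/2}(\partial D,d\sigma)}$ recorded at the end of the proof of the preceding proposition, so that each weighted diagonal matrix element $c_i\langle\mathrm{Op}_{1,h}\phi_i(h),\phi_i(h)\rangle$ is identically $1$. Your spelling out of the cancellation $c_i\|\phi_i(h)\|_{L^2}^2=1$ and the absorption of $(2\pi h)^{1-d}$ into the error term is exactly the bookkeeping the paper leaves implicit.
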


\subsection{Ergodic decomposition theorem and quantum ergodicity} 

Let us denote $\sigma_H $ as the Riemannian measure on $ \{ H = 1 \} \subset T^* (\partial D)$.
Since $X_{H} H = 0 $ and $\mathcal{L}_{X_{H}} \omega^{d-1} = 0 $, we have that
 $\sigma_{H} := \lim_{\varepsilon \rightarrow 0} \varepsilon^{ d-2} \, \chi_{\{ | H - 1 | < \varepsilon \} }   \, d \sigma \otimes d \sigma^{-1} $ is an invariant measure on  $ \{ H = 1 \} $.
We also notice that, on $ \{ H = 1 \} $, 
\[
X_{\tilde H} = \rho'(1) X_{H} = e^{-1} X_{H} \,.
\]
Let $M_{X_{H}} (  \{ H =1 \}  )$ be the set of $X_{H}$ invariant measures on  $\{ H = 1 \} $ and also $M_{X_{H},\text{erg}} (  \{ H = 1 \}  )$ be the set of ergodic measures with respect to the Hamiltonian flow generated by $X_H$ on $\{ H = 1 \} $.
Since $X_{\tilde H} = e^{-1} X_{H} $, we have $M_{X_{\tilde H}} (  \{ H =1 \}  ) = M_{X_{H}} (  \{ H =1 \}  )$ and  $M_{X_{\tilde H},\text{erg}} (  \{ H = 1 \}  ) = M_{X_{H},\text{erg}} (  \{ H = 1 \}  )$.  Therefore, we do not distinguish between them.
Now, since $\{ H = 1 \}$ has a countable base, the weak-* topology of $M_{X_{H}} (  \{ H = 1 \}  )$ is metrizable, and hence Choquet's theorem can be applied to obtain the following ergodic decomposition theorem.
\begin{Proposition} \cite{ergodic} \label{full_decomposition}
Given a probability measure $\eta \in M_{X_{H}} (  \{ H = 1 \}  )$, there exists a probability measure $\nu \in M ( M_{X_{H},\text{erg}} (  \{ H = 1 \}  ) )$ such that
\[
\eta = \int_{M_{X_{H},\text{erg}} (  \{ H = 1 \}  ) }  \mu \,  d \nu (\mu) \,.
\]
\end{Proposition}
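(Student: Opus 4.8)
The plan is to deduce the statement directly from Choquet's theorem once the appropriate compact convex setting has been identified, since this is a classical result (usually attributed to the ergodic decomposition of invariant measures, going back to Kryloff--Bogoliouboff and refined via Choquet theory). First I would recall that $M_{X_H}(\{H=1\})$, the set of Borel probability measures on the compact metric space $\{H=1\}$ that are invariant under the Hamiltonian flow $\Phi^t$ generated by $X_H$, is a convex subset of the dual space $\mathcal{C}(\{H=1\})^*$, and that it is compact in the weak-$*$ topology: compactness follows from the Banach--Alaoglu theorem (the set of probability measures is weak-$*$ compact) together with the fact that flow-invariance is a weak-$*$ closed condition, i.e., $\int \phi\circ\Phi^t\, d\eta = \int \phi\, d\eta$ for all $\phi\in\mathcal{C}(\{H=1\})$ and all $t$ is preserved under weak-$*$ limits. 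Since $\{H=1\}$ has a countable base (it is a compact subset of the second-countable manifold $T^*(\partial D)$, and is in fact a smooth submanifold by Assumption (A)), the space $\mathcal{C}(\{H=1\})$ is separable, so the weak-$*$ topology on bounded sets of its dual is metrizable; hence $M_{X_H}(\{H=1\})$ is a compact metrizable convex set.

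Next I would identify the extreme points of this convex set: a classical fact is that $\eta \in M_{X_H}(\{H=1\})$ is an extreme point if and only if it is ergodic with respect to the flow, i.e., every flow-invariant Borel set has $\eta$-measure $0$ or $1$. The forward direction is standard (if $\eta$ were ergodic but decomposable as $\eta = t\eta_1 + (1-t)\eta_2$ with $\eta_i$ invariant and distinct, one derives a contradiction using that $\eta_1, \eta_2 \ll \eta$ and a Radon--Nikodym/invariance argument produces a non-trivial invariant set), and the converse is similar. Thus the set $\mathrm{ext}\, M_{X_H}(\{H=1\}) = M_{X_H,\mathrm{erg}}(\{H=1\})$, and one checks it is a Borel (indeed $G_\delta$) subset of $M_{X_H}(\{H=1\})$. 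With these identifications in place, Choquet's theorem applies verbatim: for every point $\eta$ of the compact metrizable convex set $M_{X_H}(\{H=1\})$ there exists a probability measure $\nu$ supported on the extreme points such that $\eta$ is the barycenter of $\nu$, which is precisely the asserted formula $\eta = \int_{M_{X_H,\mathrm{erg}}(\{H=1\})} \mu \, d\nu(\mu)$, interpreted weakly, i.e., $\int f\, d\eta = \int \left(\int f\, d\mu\right) d\nu(\mu)$ for all $f \in \mathcal{C}(\{H=1\})$.

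The main obstacle — really the only non-bookkeeping point — is verifying that $\{H=1\}$ is a genuine compact metric space on which the flow is well-defined and continuous, so that the abstract Choquet machinery can be invoked. Here Assumption (A) is essential: as noted in the excerpt, it guarantees $\overline{\{H=1\}} \cap (\partial D \times \{0\}) = \emptyset$, so $\{H=1\}$ stays away from the locus where $H$ fails to be smooth; combined with the fact that $X_H$ is tangent to $\{H=1\}$ (since $X_H H = \{H,H\} = 0$) and non-vanishing there (by the equivalence with $H\neq 0$ everywhere, again from Assumption (A)), the flow $\Phi^t$ is a well-defined continuous (indeed smooth) $\mathbb{R}$-action preserving $\{H=1\}$. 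One should also confirm that $\{H=1\}$ is compact: it is a closed subset of $T^*(\partial D)$, and the estimate $H(x,\xi)$ behaves like $|\xi|^{-2}$ as $|\xi|\to\infty$ (from \eqref{eq:s1}, the principal symbol $p_{\mathcal{K}^*_{\partial D},-1}$ is $O(|\xi|^{-1})$) together with $H > 0$ bounded below on each fiber by Assumption (A) forces $|\xi|$ to lie in a compact interval on $\{H=1\}$, and $\partial D$ is itself compact. Once these geometric points are settled, the remainder is the standard Choquet argument and requires no further work.

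Finally I would note for completeness that one may, if desired, replace $X_H$ by $X_{\tilde H} = e^{-1}X_H$ on $\{H=1\}$ without changing anything, since the two vector fields are positive scalar multiples of each other there and hence have the same invariant and ergodic measures, as already observed in the text preceding the statement; this is why the proposition is phrased in terms of $X_H$ and the reduction to $\tilde H$ (which is the object with a genuine smooth symbol in $S^{-2}$) is harmless.
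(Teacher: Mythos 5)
Your proposal is correct and follows essentially the same route the paper indicates: the paper itself, in the sentence immediately preceding the proposition, notes that $\{H=1\}$ has a countable base so the weak-$*$ topology on $M_{X_H}(\{H=1\})$ is metrizable and invokes Choquet's theorem, citing Walters for the details. You have simply unpacked that standard argument (compact convex metrizable set of invariant measures, extreme points are the ergodic measures, Choquet representation) and verified the geometric preconditions under Assumption (A), which is exactly the content the paper delegates to the reference.
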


 Applying Proposition~\ref{full_decomposition} to $\sigma_{H} / \sigma_H (\{ H = 1 \}) $, we have a probability measure $\nu \in M ( M_{X_{H},\text{erg}} (  \{ H = 1 \}  ) )$ such that
\[
\sigma_{H} = \sigma_H (\{ H = 1 \}) \int_{M_{X_{H},\text{erg}} (  \{ H = 1 \}  ) }  \mu \,  d \nu (\mu) \,.
\]
Note by rescaling $\{H = E \} = E^{-1/2} \{H = 1\}$, and therefore $d \sigma \otimes d \sigma^{-1} = E^{1 - \frac{d}{2} } d E \otimes d \sigma_{H}$. For any $\mu \in  M_{X_{H},\text{erg}} (  \{ H = 1 \}  )  $, we let $\mu_E : = m_{ E^{-1/2} } \# \mu  \in M_{X_{H},\text{erg}} (  \{ H = E \}  )$ be the push-forward measure given by $m_{ E^{-1/2} } : T^*(\partial D) \rightarrow   T^*(\partial D) \, (x,\xi) \mapsto (x, E^{-1/2}  \xi) $, then
\begin{eqnarray}
 \sigma \otimes \sigma^{-1}   = \sigma_H (\{ H = 1 \}) \int_{ (0,\infty] \times M_{X_{H},\text{erg}} (  \{ H = 1 \}  ) }   \mu_E  \, E^{1 - \frac{d}{2} } \, ( d E  \otimes  d \nu ) \, ( E, \mu) \,.
\label{disintegration}
\end{eqnarray}
\begin{Remark}
\eqref{disintegration} is a disintegration of the measure $ \sigma \otimes \sigma^{-1} $ into a family of measures $ \{ \mu_E \} $ parametrized by $(E,\mu)$. The $ E^{1 - \frac{d}{2} } dE \otimes  d \nu$-measurable measure-valued function $(E,\mu) \mapsto \mu_E$ integrates to produce the original measure $ \sigma \otimes \sigma^{-1} $.
\end{Remark}
\noindent 
Next, we aim to derive a more general version of quantum ergodicity, following the original argument in, e.g. \cite{erg1,erg11,Zel,erg3,erg32,erg33,erg34,erg35,erg2,erg_a,erg_b,trace} as follows. 
To start with, we have the following application from Birkhoff-Khinchin \cite{birkhoff} and Von-Neumann's ergodic theorems \cite{vonneumann}.
\begin{Lemma} 
 \label{birkroff} 
Under Assumption (A), for any $r\leq s$ and all $a_0 \in \mathcal{S}^m(T^*(\partial D))$,
we have
\beqnx
\frac{1}{T}\int_0^T a_{x,\xi}(t) dt \rightarrow_{a.e. d \sigma \otimes d \sigma^{-1} \text{ and } L^2(\{ r \leq H \leq s\}, d \sigma \otimes d \sigma^{-1} ) } \bar{a}(x,\xi)\text{ as $T \rightarrow \infty$,}
\eqnx
for some $X_H$-invariant $\bar{a} \in L^2 (\{ r \leq H \leq s\}, d \sigma \otimes d \sigma^{-1} ) $, where a.e.-$ E^{1 - \frac{d}{2} } dE \otimes  d \nu$, we have
\beqnx
\bar{a} (x,\xi ) = \int_{\{H = E\} } a_0   \, d \mu_E  \quad   \text{ a.e. } d \mu_E .
\eqnx
\end{Lemma}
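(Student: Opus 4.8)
The plan is to deduce this statement from two classical ergodic theorems applied to the Hamiltonian flow $\Phi_t$ generated by $X_{\tilde H}$ (equivalently, a time-reparametrization of the flow of $X_H$, since on $\{H=E\}$ one has $X_{\tilde H} = \rho'(E) X_H$ with $\rho'(E)>0$). First I would observe that, because $a_0 \in S^m(T^*(\partial D))$ and the flow preserves the Liouville measure $d\sigma\otimes d\sigma^{-1}$ (this is $\mathcal{L}_{X_H}\omega^{d-1}=0$, already recorded before \eqref{disintegration}), the function $a_{x,\xi}(t) = a_0(\gamma(t),p(t))$ is, for fixed $t$, an $L^2$ function on the compact energy shell region $\{r\le H\le s\}$ — here Assumption (A) is what guarantees that this region stays away from $\partial D\times\{0\}$, so $a_0$ restricted to it is genuinely bounded and $L^2$. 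Then I would invoke the Birkhoff--Khinchin pointwise ergodic theorem \cite{birkhoff} to get almost-everywhere convergence of the time average $\frac1T\int_0^T a_{x,\xi}(t)\,dt$ to some flow-invariant limit $\bar a$, and von Neumann's mean ergodic theorem \cite{vonneumann} to get the convergence simultaneously in $L^2(\{r\le H\le s\}, d\sigma\otimes d\sigma^{-1})$. The limit $\bar a$ is characterized as the conditional expectation of $a_0$ onto the $\sigma$-algebra of $X_H$-invariant sets, hence is itself $X_H$-invariant.

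Next I would identify $\bar a$ fiberwise over the ergodic decomposition. Using the disintegration \eqref{disintegration}, the measure $d\sigma\otimes d\sigma^{-1}$ on $\{r\le H\le s\}$ decomposes, over the parameter space $(E,\mu)\in (r,s]\times M_{X_H,\mathrm{erg}}(\{H=1\})$ with the measure $E^{1-d/2}\,dE\otimes d\nu$, into the ergodic components $\mu_E$. For $\nu$-a.e. $\mu$ and a.e. $E$ the measure $\mu_E$ is ergodic for the flow on $\{H=E\}$, so Birkhoff's theorem applied on that component gives that $\frac1T\int_0^T a_{x,\xi}(t)\,dt \to \int_{\{H=E\}} a_0\, d\mu_E$ for $\mu_E$-a.e. $(x,\xi)$. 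Since the pointwise limit is essentially unique (a.e.), the two descriptions of the limit must agree: $\bar a(x,\xi) = \int_{\{H=E\}} a_0\, d\mu_E$ for $\mu_E$-a.e. $(x,\xi)$, for $(E^{1-d/2}dE\otimes d\nu)$-a.e. $(E,\mu)$. This is exactly the claimed fiberwise formula, and it also re-confirms $X_H$-invariance of $\bar a$ since each component integral is a constant on its energy shell (invariant under the flow on that shell).

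The one genuine subtlety — and the step I expect to be the main obstacle to write cleanly — is the measurability/Fubini bookkeeping needed to combine the abstract ergodic theorem on the whole region with its restriction to each ergodic component: one must know that the pointwise limit $\bar a$ obtained globally coincides, on $\mu_E$-almost every point, with the limit obtained componentwise, for $(E,\mu)$ in a full-measure set. This is where the fact that $\{H=1\}$ has a countable base (used just before Proposition~\ref{full_decomposition}) and the measurability of $(E,\mu)\mapsto\mu_E$ (the Remark after \eqref{disintegration}) enter: they make the map $(x,\xi)\mapsto \int_{\{H=E(x,\xi)\}} a_0\,d\mu_{E(x,\xi)}$ a bona fide measurable function, so that comparing it with $\bar a$ on a full-measure set is legitimate. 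I would also remark that the time-reparametrization between $X_H$ and $X_{\tilde H}$ does not affect any of this: a positive smooth reparametrization of a flow has the same invariant and ergodic measures (as already noted in the text, $M_{X_{\tilde H}}=M_{X_H}$ and likewise for the ergodic measures), and the Cesàro time averages converge to the same spatial averages. Everything else — the existence and smooth dependence of the flow $(\gamma(t),p(t))$ on $(x,\xi)$, and the preservation of Liouville measure — has been set up in the preceding subsections and can be quoted directly.
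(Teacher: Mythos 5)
Your proposal is correct and follows essentially the same route as the paper: apply Birkhoff and von Neumann on the whole region $\{r\le H\le s\}$ to produce a flow-invariant limit $\bar a$, then use the ergodic disintegration \eqref{disintegration} plus Birkhoff again on each ergodic component $\mu_E$ to identify $\bar a$ with $\int_{\{H=E\}} a_0\,d\mu_E$, and conclude by uniqueness of the a.e.\ limit. The only stylistic difference is that the paper handles the ``measurability/Fubini bookkeeping'' you flagged quite concretely, by introducing the two exceptional null sets $\mathcal{E}$ (where the global Birkhoff limit fails) and $\mathcal{E}_{\mu_E}$ (where the componentwise Birkhoff limit fails), showing $\sigma\otimes\sigma^{-1}(\mathcal{E})=0$ disintegrates to $\mu_E(\mathcal{E})=0$ for $(E^{1-d/2}dE\otimes d\nu)$-a.e.\ $(E,\mu)$, and then invoking $\mu_E(\mathcal{E}\cup\mathcal{E}_{\mu_E})=0$; writing it this way makes the full-measure comparison you wanted completely explicit and sidesteps the need for any separate measurability claim about $(x,\xi)\mapsto\int a_0\,d\mu_{E(x,\xi)}$.
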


\begin{Remark}
This lemma should be understood in the sense of disintegration \eqref{disintegration} of $ \sigma \otimes \sigma^{-1} $ via $(E,\mu) \mapsto \mu_E$, as well as under a choice of representative of an equivalence class in $L^2 (\{ r \leq H \leq s\}, d \sigma \otimes d \sigma^{-1} )$: there exists an $X_H$-invariant function $\bar{a}(x,\xi )$ as a representative in an $L^2$-equivalence class, such that, for a.e.-$ E^{1 - \frac{d}{2} } dE \otimes  d \nu$ over the parameter $(E,\mu)$, $\bar{a}(x,\xi )$ is a constant a.e. $\mu_E$ over $(x,\xi)$, and the constant is given by $ \int_{\{H = E\} } a_0   \, d \mu_E$.
\end{Remark}

\begin{proof}
By Birkhoff and Von-Neumann's ergodic theorems \cite{birkhoff,vonneumann} on $\chi_{ \{ r \leq H \leq s\}} \,  d \sigma \otimes d \sigma^{-1}  $, we have
\beqnx
\frac{1}{T}\int_0^T a_{x,\xi}(t) dt \rightarrow_{a.e. d \sigma \otimes d \sigma^{-1} \text{ and } L^2(\{ r \leq H \leq s\}, d \sigma \otimes d \sigma^{-1} ) } \bar{a}(x,\xi)\text{ as $T \rightarrow \infty$,}
\eqnx
for some  $\bar{a} \in L^2 (\{ r \leq H \leq s\}, d \sigma \otimes d \sigma^{-1} ) $ invariant under the Hamiltonian flow. 
Let 
\[ \mathcal{E} := \left \{ (x,\xi) \in \{ r \leq H \leq s \}  : \limsup_T \left| \frac{1}{T}\int_0^{T} a_{x,\xi}(t) dt - \bar{a}(x,\xi) \right | > 0   \right \} \,, \] then $ \sigma \otimes \sigma^{-1} ( \mathcal{E} ) = 0$.  Now by Lemma \ref{full_decomposition}, we have
\beqnx
 \sigma_H (\{ H = 1 \}) \int_{ [r,s] \times M_{X_{H},\text{erg}} (  \{ H = 1 \}  ) }   \mu_E( \mathcal{E} )  \, E^{1 - \frac{d}{2} } \, d E  \otimes  d \nu \, ( E, \mu) =  \sigma \otimes \sigma^{-1} ( \mathcal{E} ) = 0 \,,
\eqnx
and therefore, a.e. $ E^{1 - \frac{d}{2} } dE \otimes  d \nu$, we have $\mu_E (\mathcal{E} ) = 0 $.  Meanwhile, since $X_H$ is ergodic with respect to $\mu_E$, by Birkhoff's theorem \cite{birkhoff}, we obtain
\beqnx
\frac{1}{T}\int_0^T a_{x,\xi}(t) dt \rightarrow_{a.e. \mu_E} 
 \int_{\{H = E\} } a_0   \, d \mu_E  \quad  \text{ as $T \rightarrow \infty$.}
\eqnx
Again let 
\[ \mathcal{E}_{\mu_E} := \left \{ (x,\xi) \in \{ r \leq H \leq s \}  : \limsup_T \left | \frac{1}{T}\int_0^{T} a_{x,\xi}(t) dt -   \int_{ \{ H = E \} } a_0 \, d \mu_E \right | > 0   \right \} \,, \]
we have $\mu_E ( \mathcal{E}_{\mu_E}) = 0 $.  Therefore, a.e. $ E^{1 - \frac{d}{2} } dE \otimes  d \nu$, $\mu_E \left(\mathcal{E} \bigcup \mathcal{E}_{\mu_E} \right) = 0 $.  The lemma follows by the uniqueness of the limit.
\end{proof}

We can then show the following theorem by following the arguments in \cite{erg1,erg11,Zel,erg3,erg32,erg33,erg34,erg35,erg2,erg_a,erg_b,trace,Ego} with some generalizations.
\begin{Theorem}\label{thm:ergo1}
Under Assumption (A), fixing  $r \leq s$ and writing $c_i := | \phi_i |_{H^{-\frac{1}{2}}(\partial D, d \sigma)}^{-2} $ to denote the $H^{-\frac{1}{2}}$ semi-norm, we have the following (variance-like) estimate as $h\rightarrow+0$,
\begin{equation}\label{eq:est1}
\frac{1}{ \sum_{ r \leq  \lambda_i^2(h) \leq s} 1 }
 \sum_{ r \leq  \lambda_i^2(h) \leq s}  c_i^2 \left|   \langle  A_h \,  \phi_i(h) ,  \phi_i(h) \rangle_{L^2(\partial D, d \sigma)}  -  \langle \mathrm{Op}_{\bar{a}, h}   \,  \phi_i(h) ,  \phi_i(h) \rangle_{L^2(\partial D , d \sigma)}   \right|^2 \rightarrow 0.
 \end{equation}
\end{Theorem}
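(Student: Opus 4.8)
Write $A_h=A_h(0)=\mathrm{Op}_{a_0,h}$ with $a_0\in S^m(T^*(\partial D))$, let $\bar a$ be the $L^2$ time-average of $a_0$ supplied by Lemma~\ref{birkroff}, and abbreviate $N(h):=\#\{i:r\le\lambda_i^2(h)\le s\}$. The strategy is the classical Shnirelman-Zelditch-Colin de Verdi\`ere variance argument, adapted so that the ``phase-space average'' is the possibly non-constant function $\bar a$. Working with the self-adjoint realization of $\rho\!\left([\mathcal K^*_{h,\partial D}]^2\right)$ provided by the symmetrization \eqref{eq:s3}, each $\phi_i(h)$ is an exact eigenfunction of it, with eigenvalue $\rho(\lambda_i^2(h))$; hence $U_h(t):=\exp\!\big(\tfrac{\mathrm i t}{h}\rho([\mathcal K^*_{h,\partial D}]^2)\big)$ is unitary, $U_h(t)\phi_i(h)=e^{\mathrm i t\rho(\lambda_i^2(h))/h}\phi_i(h)$, and therefore the diagonal matrix elements are invariant under the Heisenberg time average: for every $T>0$ and every $i$,
\[
\langle A_h\,\phi_i(h),\phi_i(h)\rangle=\Big\langle \langle A_h\rangle_T\,\phi_i(h),\phi_i(h)\Big\rangle,\qquad \langle A_h\rangle_T:=\frac1T\int_0^T U_h(t)^{*}A_h\,U_h(t)\,dt .
\]
I also use that $\|\phi_i(h)\|_{L^2(\partial D,d\sigma)}^2=c_i^{-1}$ and, setting $V(r,s):=\int_{\{r\le H\le s\}}d\sigma\otimes d\sigma^{-1}>0$, that $(2\pi h)^{d-1}N(h)\to V(r,s)$ by \eqref{weyl2}.

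\textbf{Step 1: a comparison estimate.} First I would establish that for any $b,b'\in S^m(T^*(\partial D))$,
\[
\limsup_{h\to+0}\ \frac{1}{N(h)}\sum_{r\le\lambda_i^2(h)\le s}c_i^2\big|\langle \mathrm{Op}_{b,h}\phi_i(h),\phi_i(h)\rangle-\langle \mathrm{Op}_{b',h}\phi_i(h),\phi_i(h)\rangle\big|^2\le\frac{1}{V(r,s)}\int_{\{r\le H\le s\}}|b-b'|^2\,d\sigma\otimes d\sigma^{-1}.
\]
Indeed, with $c:=b-b'$, Cauchy-Schwarz applied to $\mathrm{Op}_{c,h}\phi_i(h)$ and $\phi_i(h)$, together with $\|\phi_i(h)\|_{L^2}^2=c_i^{-1}$, gives $c_i^2\big|\langle\mathrm{Op}_{c,h}\phi_i(h),\phi_i(h)\rangle\big|^2\le c_i\,\langle\mathrm{Op}_{c,h}^{*}\mathrm{Op}_{c,h}\,\phi_i(h),\phi_i(h)\rangle$; since $\mathrm{Op}_{c,h}^{*}\mathrm{Op}_{c,h}=\mathrm{Op}_{|c|^2,h}+\mathcal O(h\,\Phi\mathrm{SO}_h^{2m-1})$, the generalized Weyl's law \eqref{weyl} with $a=|c|^2$ and \eqref{weyl2} with $a=1$ give the bound after dividing by $N(h)$, the $\mathcal O(h)$ remainder contributing $\mathcal O(h)$ to the normalized trace. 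In particular the map $b\mapsto\big(c_i\langle\mathrm{Op}_{b,h}\phi_i(h),\phi_i(h)\rangle\big)_i$ is, in the averaged $\ell^2$ sense, $V(r,s)^{-1/2}$-Lipschitz in the $L^2(\{r\le H\le s\})$-norm of the symbol, which is exactly what lets $\mathrm{Op}_{\bar a,h}$ -- with the merely $L^2$ symbol $\bar a$ -- be interpreted as a limit of the smooth quantizations $\mathrm{Op}_{(a_0)_T,h}$ below.

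\textbf{Step 2: insert the time average and send $T\to\infty$.} Fix $T>0$. For $h$ small enough that $T\le C\log(1/h)$, Corollary~\ref{congugation} (Egorov's theorem) gives $\langle A_h\rangle_T=\mathrm{Op}_{(a_0)_T,h}+\mathcal O(h\,\Phi\mathrm{SO}_h^{\,m-1})$, where $(a_0)_T(x,\xi):=\tfrac1T\int_0^T a_{x,\xi}(t)\,dt\in S^m(T^*(\partial D))$ is the finite-time averaged symbol. Combining this with the invariance of the diagonal elements, one may replace $A_h$ by $\mathrm{Op}_{(a_0)_T,h}$ in \eqref{eq:est1} modulo an $\mathcal O(h)$ error that the triangle and Cauchy-Schwarz inequalities absorb; Step~1 with $b=(a_0)_T$, $b'=\bar a$ then yields
\[
\limsup_{h\to+0}\ \frac{1}{N(h)}\sum_{r\le\lambda_i^2(h)\le s}c_i^2\big|\langle A_h\phi_i(h),\phi_i(h)\rangle-\langle\mathrm{Op}_{\bar a,h}\phi_i(h),\phi_i(h)\rangle\big|^2\le\frac{1}{V(r,s)}\int_{\{r\le H\le s\}}\big|(a_0)_T-\bar a\big|^2\,d\sigma\otimes d\sigma^{-1}.
\]
Finally let $T\to\infty$: by Lemma~\ref{birkroff} (von Neumann's mean ergodic theorem applied to $\chi_{\{r\le H\le s\}}\,d\sigma\otimes d\sigma^{-1}$) one has $(a_0)_T\to\bar a$ in $L^2(\{r\le H\le s\},d\sigma\otimes d\sigma^{-1})$, so the right-hand side tends to $0$. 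Since the left-hand side is independent of $T$ and nonnegative, it equals $0$, which is \eqref{eq:est1}. This is the exact analogue of the classical reduction to the case $\bar b:=\overline{a_0-\bar a}=0$, which holds because $\bar a$ is $X_H$-invariant and time-averaging is linear.

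\textbf{Main difficulty.} The delicate point is the interchange of the two limits: the Egorov identity $\langle A_h\rangle_T=\mathrm{Op}_{(a_0)_T,h}+\mathcal O(h)$ is available only below the Ehrenfest-type threshold $T\le C\log(1/h)$ (this is the $t<C\log h$ restriction in Corollary~\ref{congugation}), so $T=T(h)$ cannot be sent to infinity too fast; the remedy is to take $h\to+0$ first at $T$ fixed and only then let $T\to\infty$, which forces the estimate to proceed through a $\limsup$ (promoted to the stated limit by nonnegativity). Conceptually, the sole departure from the classical theorem is that $\bar a$ need not be constant on $\{H=E\}$: it is the conditional expectation onto the $\sigma$-algebra of $X_H$-invariant sets, made explicit by the disintegration \eqref{disintegration}. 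The variance argument, however, uses nothing about $\bar a$ beyond the $L^2$-convergence $(a_0)_T\to\bar a$, which holds regardless of whether the Hamiltonian flow is ergodic with respect to $\sigma_H$; the only genuinely new bookkeeping point is giving meaning to $\mathrm{Op}_{\bar a,h}$ for $\bar a\in L^2$, handled by the Lipschitz bound in Step~1.
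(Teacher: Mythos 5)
Your proposal is correct and follows essentially the same route as the paper: reduce to the Heisenberg time average, invoke Egorov (Corollary~\ref{congugation}) to replace the averaged operator by $\mathrm{Op}_{(a_0)_T,h}$ modulo $\mathcal O_T(h)$, apply Cauchy--Schwarz to pass from squared diagonal matrix elements to a quadratic form of $\mathrm{Op}^*\mathrm{Op}$, sum with the generalized Weyl law \eqref{weyl}--\eqref{weyl2} to obtain the $L^2(\{r\le H\le s\})$ variance of $(a_0)_T-\bar a$, and finally let $T\to\infty$ using Lemma~\ref{birkroff} (von Neumann). The one genuinely cleaner bookkeeping step you make is using the exact identity $\langle A_h\phi_i(h),\phi_i(h)\rangle=\langle\langle A_h\rangle_T\phi_i(h),\phi_i(h)\rangle$ (from $U_h(t)\phi_i(h)=e^{it\rho(\lambda_i^2(h))/h}\phi_i(h)$ being exact) rather than the paper's $\mathcal O_t(h)$ version phrased through the evolved operator $A_h(t)$; this is an improvement in presentation but not in substance, and your Step~1 Lipschitz lemma is the same Cauchy--Schwarz plus Weyl estimate that the paper performs inline in \eqref{eq:sum1}--\eqref{eq:ee4}.
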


\begin{proof}
We lift the Birkhoff and Von-Neumann to the operator level, via the Hamiltonian flow of the principle symbol.  Consider $A_h(0) = A_h$. From the definition of $\phi_i(h)$, we have for each $i$
\begin{equation}\label{eq:est2}
\begin{split}
& \langle A_h(t) \phi_i(h) , \phi_i(h) \rangle_{L^2(\partial D, d \sigma)} \\
=& \langle A_h(0) e^{- \frac{\mathrm{i} t}{h} \rho \left( [ \mathcal{K}^*_{h,\partial D} ]^2 \right) }  \phi_i(h) , e^{- \frac{\mathrm{i} t}{h}  \rho \left( [ \mathcal{K}^*_{h,\partial D} ]^2 \right) }  \phi_i(h) \rangle_{L^2(\partial D, d \sigma)}   + \mathcal{O}_t (h) \\
=&  \langle A_h \,  \phi_i(h) ,  \phi_i(h) \rangle_{L^2(\partial D, d \sigma)}+ \mathcal{O}_t (h) ,
\end{split}
\end{equation}
where the second equality comes from Corollary \ref{congugation} and the definition of the NP eigenfunctions (cf. \eqref{eq:eg1}), and the asymptotic $\mathcal{O}$ depends on $t$.  Averaging both sides of \eqref{eq:est2} with respect to $T$, we have
\beqnx
 \left \langle   \left( \frac{1}{T}\int_0^T  A_h(t) dt \right) \phi_i(h) , \phi_i(h) \right \rangle_{L^2(\partial D, d \sigma)}  =  \langle A_h   \phi_i(h) ,  \phi_i(h) \rangle_{L^2(\partial D , d \sigma)} + \mathcal{O}_T (h),
\eqnx
where the asymptotic $\mathcal{O}$ depends on $T$.  Then, again by Corollary \ref{congugation}, it is handy to verify that
\beqnx
 \frac{1}{T}\int_0^T  A_h(t) dt -   \text{Op}_{\bar{a}, h} = \mathrm{Op}_{ \frac{1}{T}\int_0^T a(t) dt  - \bar{a} } + \mathcal{O}_T (h )
\eqnx
is a pseudo-differential operator.  Next from the Cauchy-Schwarz inequality, we have
\begin{equation}\label{eq:sum1}
\begin{split}
 &\left|  \frac{ \left \langle   \text{Op}_{\bar{a}, h}  \phi_i(h) , \phi_i(h) \right \rangle_{L^2(\partial D, d \sigma)} }{{\langle \phi_i(h) , \phi_i(h) \rangle_{L^2(\partial D, d \sigma)}} } - \frac{ \langle  A_h    \phi_i(h) ,  \phi_i(h) \rangle_{L^2(\partial D, d \sigma)} }{{\langle \phi_i(h) , \phi_i(h) \rangle_{L^2(\partial D, d \sigma)}} } \right|^2 \\
\leq & \frac{  \left \langle  \left(  \frac{1}{T}\int_0^T  A_h(t) dt -   \text{Op}_{\bar{a}, h} \right)^* \left( \frac{1}{T}\int_0^T  A_h(t) dt -   \text{Op}_{\bar{a}, h} \right)  \phi_i(h) ,  \phi_i(h) \right \rangle_{L^2(\partial D, d \sigma)} }{{\langle \phi_i(h) , \phi_i(h) \rangle_{L^2(\partial D, d \sigma)}} } + \mathcal{O}_T (h^2).   
\end{split}
\end{equation}
Therefore, summing up $i$ of \eqref{eq:sum1} and applying \eqref{weyl} and \eqref{weyl2}, we have
\begin{equation}\label{eq:ee4}
\begin{split}
& \frac{1}{ \sum_{ r \leq  \lambda_i^2(h) \leq s} 1 }
 \sum_{ r \leq  \lambda_i^2(h) \leq s}   c_i^2 \left|   \langle  A_h \,  \phi_i(h) ,  \phi_i(h) \rangle_{L^2(\partial D , d \sigma)}  -  \langle \text{Op}_{\bar{a}, h}   \,  \phi_i(h) ,  \phi_i(h) \rangle_{L^2(\partial D , d \sigma)}   \right|^2 \\
\leq&
\frac{\int_{ \{ r \leq H \leq  s\} } \left| \frac{1}{T}\int_0^T a_{(x,\xi)}(t) dt  - \bar{a} \right|^2 \, d \sigma \otimes d \sigma^{-1}}{\int_{ \{ r \leq H \leq  s\} } \, d \sigma \otimes d \sigma^{-1}} + o_{r,s,T}(1).
\end{split}
\end{equation}
Finally, \eqref{eq:est1} readily follows by noting that the first term at the right-hand side of \eqref{eq:ee4} goes to zero as $T$ goes to infinity.

The proof is complete. 
\end{proof}

With Theorem~\ref{thm:ergo1}, together with Chebeychev's trick and a diagonal argument, we have the following quantum ergodicity result \cite{erg1,erg11,Zel,erg3,erg32,erg33,erg34,erg35,erg2,erg_a,erg_b,trace} with some generalization.
\begin{Corollary}
Under Assumption (A), given $r,s$, there exists $S(h) \subset J(h) := \{i \in \mathbb{N} : r  \leq \lambda_i^2(h) \leq s   \}$ such that 
for all $a_0 \in S^{m}(T^*(\partial D))$, we have as $h\rightarrow +0$,
\beqn
\max_{i \in S(h)} c_i \left|   \left \langle ( A_h - \mathrm{Op}_{\bar{a}, h} )  \,  \phi_i(h) ,  \phi_i(h) \right \rangle_{L^2(\partial D , d \sigma)} \right| = o_{r,s}(1) \, \text{ and } \, \frac{\sum_{i \in S(h) } 1 }{ \sum_{i \in J(h) } 1 } = 1 + o_{r,s}(1) \,.
\label{ergodicity}
\eqn
\end{Corollary}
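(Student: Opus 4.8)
The plan is to extract the density-one set $S(h)$ from the variance-type estimate \eqref{eq:est1} by Chebyshev's inequality, and then to make $S(h)$ simultaneously good for \emph{every} symbol by a diagonal argument over a countable dense family — the standard route to quantum ergodicity in \cite{erg1,erg11,Zel,erg3,erg2,trace,Ego}, here with the mild generalization that the averaged symbol $\bar a$ (rather than a spatial average) appears.

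First, for a fixed $a_0 \in S^m(T^*(\partial D))$, abbreviate
\[
R_i(h) := c_i \left| \left\langle ( A_h - \mathrm{Op}_{\bar a, h} )\, \phi_i(h), \phi_i(h) \right\rangle_{L^2(\partial D, d\sigma)} \right| \geq 0, \qquad A_h = \mathrm{Op}_{a_0,h}.
\]
Theorem~\ref{thm:ergo1} states precisely that $\frac{1}{\#J(h)}\sum_{i\in J(h)} R_i(h)^2 \to 0$ as $h\to+0$, where $J(h) = \{ i : r \leq \lambda_i^2(h) \leq s \}$. Hence, by Chebyshev's inequality, for every $\varepsilon>0$,
\[
\frac{1}{\#J(h)}\, \# \{ i \in J(h) : R_i(h) > \varepsilon \} \;\leq\; \frac{1}{\varepsilon^2\, \#J(h)} \sum_{i\in J(h)} R_i(h)^2 \;\longrightarrow\; 0 \quad \text{as } h\to+0 .
\]
So for this single symbol and this single threshold one already has a density-one index set on which $R_i(h)\leq\varepsilon$; the point is to choose one set that works for all $a_0$ and all thresholds. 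To that end, fix a countable family $\{a_0^{(k)}\}_{k\in\mathbb N}$ that is dense in $S^m(T^*(\partial D))$ for the Fr\'echet topology given by the symbol seminorms. For each $k$ let $R_i^{(k)}(h)$ denote the quantity above built from $a_0^{(k)}$; by the Chebyshev bound applied to $a_0^{(1)},\dots,a_0^{(k)}$ at once, there is a sequence $h_k \downarrow 0$ (with $h_k < h_{k-1}$) such that for all $0<h<h_k$,
\[
\frac{1}{\#J(h)}\, \# \Bigl\{ i \in J(h) : \max_{1\leq j\leq k} R_i^{(j)}(h) > \tfrac1k \Bigr\} \;<\; \tfrac1k .
\]
Now define, for $h\in[h_{k+1},h_k)$,
\[
S(h) := \Bigl\{ i \in J(h) : \max_{1\leq j\leq k} R_i^{(j)}(h) \leq \tfrac1k \Bigr\} \subset J(h).
\]
By construction $\#S(h)/\#J(h) \geq 1 - 1/k \to 1$ as $h\to+0$, which is the second assertion in \eqref{ergodicity}. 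For the first, fix $a_0\in S^m$ and $\delta>0$; using $c_i\langle\phi_i(h),\phi_i(h)\rangle_{L^2} = c_i\|\phi_i\|_{H^{-1/2}}^2 = 1$ one has, for any bounded $B$, $c_i|\langle B\phi_i(h),\phi_i(h)\rangle| \leq \|B\|_{L^2\to L^2}$; so picking $a_0^{(j_0)}$ close enough to $a_0$ that $\|\mathrm{Op}_{a_0,h}-\mathrm{Op}_{a_0^{(j_0)},h}\|_{L^2\to L^2}<\delta$ and $\|\mathrm{Op}_{\bar a,h}-\mathrm{Op}_{\bar a^{(j_0)},h}\|_{L^2\to L^2}<\delta$ uniformly for small $h$ (Calder\'on--Vaillancourt for the first term, continuity of $a_0\mapsto\bar a$ from Lemma~\ref{birkroff} for the second) gives $|R_i(h)-R_i^{(j_0)}(h)|<2\delta$ for all $i$. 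For $h$ small enough that the active level $k$ satisfies $k\geq j_0$ and $1/k<\delta$, every $i\in S(h)$ then obeys $R_i(h)\leq R_i^{(j_0)}(h)+2\delta \leq 1/k+2\delta<3\delta$. Since $\delta$ was arbitrary, $\max_{i\in S(h)} R_i(h) = o_{r,s}(1)$, which is the first assertion.

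The main obstacle is not the Chebyshev step (routine) but the uniformity built into the diagonalization: one must verify that the operator-norm remainders $\|\mathrm{Op}_{a_0-a_0^{(j_0)},h}\|_{L^2\to L^2}$ and $\|\mathrm{Op}_{\bar a-\bar a^{(j_0)},h}\|_{L^2\to L^2}$ can be made small \emph{uniformly in the semiclassical parameter} $h\in(0,1]$, which is where the Calder\'on--Vaillancourt estimate and the continuous dependence $a_0\mapsto\bar a$ (through the ergodic averaging of Lemma~\ref{birkroff}, understood in the disintegration \eqref{disintegration}) are essential; and one must organize the construction so that a single family $S(h)$ carries both the density-one property and the $o(1)$ decay for every symbol. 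The averaging-time dependence of the errors $\mathcal O_T(h)$ in \eqref{eq:est2}--\eqref{eq:ee4} is already absorbed upstream in Theorem~\ref{thm:ergo1} (send $h\to0$ for each $T$, then $T\to\infty$), so it poses no further difficulty here.
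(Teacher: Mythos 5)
Your argument is the standard Chebyshev-plus-diagonalization extraction of a density-one family from the variance estimate, which is precisely the route the paper takes (it cites "Chebyshev's trick and a diagonal argument" without writing out the details, noting only afterward that $S(h)$ is independent of $a_0$). Your write-up is correct and in fact more explicit than the paper: you carry out the Chebyshev bound, the stagewise construction of $S(h)$ on $[h_{k+1},h_k)$, and the approximation step that makes $S(h)$ simultaneously good for all symbols, correctly flagging the uniform operator-norm control and the continuity of $a_0 \mapsto \bar a$ as the points requiring care.
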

\noindent A very important remark of the above corollary is that the set $S(h) $ is chosen independently of the choice of $a_0$.

\section{Locolization/concentration of plasmon resonances in electrostatics}

In this section, we are ready to present one of our main results on the localization and concentration of plasmon resonances in electrostatics. 

\subsection{Consequences of generalized Weyl's law and quantum ergodicity} 

 We first derive the following theorem to characterise the local behavior of the NP eigenfunctions and their relative magnitude. In what follows, we let $(\lambda_i, \phi_i)$, $i=1, 2, \ldots$, be the ordered eigenpairs to \eqref{eq:eigen1}. We denote $\sigma_{x,H}$ as the Riemannian measure on $ \{ H(x,\cdot) = 1 \} \subset T^*_x (\partial D)$. By the generalized Weyl's law in Section~\ref{sect:3}, we can first show the following key result in our study. 

\begin{Theorem}
\label{theorem1}
Given any $x \in \partial D$, we consider $ \{ \chi_{x,\delta} \}_{\delta > 0}$ being a family of smooth nonnegative bump functions compactly supported in $B_{\delta} (x)$ with $ \int_{\partial D} \chi_{p,\delta} \, d \sigma = 1$.
Under Assumption (A), fixing $r \leq s$, $\alpha\in\mathbb{R}$ and $p, q \in \partial D$, there exists a choice of $\delta(h)$ depending on $r,s,p,q$ and $\alpha$ such that, as $h\rightarrow +0$, we have $\delta(h) \rightarrow 0$ and
\beqn
\frac{ \sum_{r \leq \lambda_i^2 (h) \leq s} c_i  \int_{\partial D} \chi_{p,\delta (h) }(x) | | D |^{\alpha}   \, \phi_i (x) |^2 d \sigma(x) }{ \sum_{r \leq \lambda_i^2 (h) \leq s} c_i  \int_{\partial D} \chi_{q,\delta (h) }(x) | | D |^{\alpha}   \, \phi_i (x) |^2 d \sigma(x) } =  \frac{     \int_{ \{ H (p, \cdot) = 1 \} }  | \xi |_{g(p)}^{1+ 2 \alpha } d \sigma_{p,H}  }{    \int_{ \{ H (q, \cdot) = 1 \} }  | \xi |_{g(q)}^{1+ 2 \alpha } d \sigma_{q,H} } + o_{r,s,p,q,\alpha}(1),
\label{concentration1}
\eqn
where $c_i := | \phi_i |_{H^{-\frac{1}{2}}(\partial D , d \sigma)}^{-2} $ is the $H^{-\frac{1}{2}}$ semi-norm and the little-$o$ depends on $r,s,p,q$ and $\alpha$.  In particular, if $\alpha = - \frac{1}{2}$, the right-hand side of \eqref{concentration1} is the ratio between the volumes of the two varieties at the respective points.
\end{Theorem}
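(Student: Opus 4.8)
The plan is to derive \eqref{concentration1} from the generalized Weyl's law \eqref{weyl}, applied to a symbol localized near $p$ and another localized near $q$, and then to let the localization radius $\delta$ shrink with $h\to+0$ by a diagonal argument. For $x_0\in\{p,q\}$ put $a_{x_0,\delta}(x,\xi):=\chi_{x_0,\delta}(x)\,|\xi|_{g(x)}^{2\alpha+1}\in S^{2\alpha+1}(T^*(\partial D))$. Using $\phi_i=|D|^{1/2}\phi_i(h)$ from \eqref{eq:eg1} and the self-adjointness of $|D|^{\alpha}=\mathrm{Op}_{|\xi|_{g(x)}^{\alpha}}$, the first step is to check that
\[
\bigl\langle \mathrm{Op}_{a_{x_0,\delta},h}\,\phi_i(h),\phi_i(h)\bigr\rangle_{L^2(\partial D,d\sigma)}=h^{2\alpha+1}\Bigl(\int_{\partial D}\chi_{x_0,\delta}\,\bigl||D|^{\alpha}\phi_i\bigr|^2\,d\sigma+\bigl\langle \widetilde{R}_{x_0,\delta}\,\phi_i,\phi_i\bigr\rangle\Bigr),
\]
where $\widetilde{R}_{x_0,\delta}$ is a classical $\Phi\mathrm{SO}$ one order lower than $|D|^{\alpha}\chi_{x_0,\delta}|D|^{\alpha}$ (it is built from $\partial_x\chi_{x_0,\delta}$, hence of size $\delta^{-1}$, but it carries no small parameter). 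Summing against $c_i$ over $r\le\lambda_i^2(h)\le s$ and invoking \eqref{weyl} for $a_{x_0,\delta}$ --- and once more, after conjugation by $|D|^{\pm1/2}$, for the classically lower-order symbol of $\widetilde{R}_{x_0,\delta}$, whose Weyl sum is therefore $\mathcal{O}_{r,s,\delta}(h)$ --- gives, for each fixed $\delta$,
\[
(2\pi h)^{d-1}h^{2\alpha+1}\sum_{r\le\lambda_i^2(h)\le s}c_i\int_{\partial D}\chi_{x_0,\delta}\,\bigl||D|^{\alpha}\phi_i\bigr|^2\,d\sigma=\int_{\{r\le H\le s\}}\chi_{x_0,\delta}(x)\,|\xi|_{g(x)}^{2\alpha+1}\,d\sigma\otimes d\sigma^{-1}+o_{r,s,\delta}(1).
\]

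Dividing the $x_0=p$ identity by the $x_0=q$ one, the universal prefactor $(2\pi h)^{d-1}h^{2\alpha+1}$ cancels, so the left-hand ratio in \eqref{concentration1} converges, as $h\to+0$ with $\delta$ fixed, to $\bigl(\int_{\{r\le H\le s\}}\chi_{p,\delta}|\xi|_g^{2\alpha+1}\,d\sigma\otimes d\sigma^{-1}\bigr)\big/\bigl(\int_{\{r\le H\le s\}}\chi_{q,\delta}|\xi|_g^{2\alpha+1}\,d\sigma\otimes d\sigma^{-1}\bigr)$, the denominator being positive for small $\delta$ since $\{H(q,\cdot)=1\}$ is a nonempty compact hypersurface under Assumption (A). Next I would disintegrate the Liouville measure over the base $\partial D$, which rewrites the numerator as $\int_{\partial D}\chi_{p,\delta}(x)F(x)\,d\sigma(x)$ with $F(x):=\int_{\{r\le H(x,\cdot)\le s\}}|\xi|_{g(x)}^{2\alpha+1}\,d\xi$; Assumption (A) makes $H(x,\cdot)$ smooth, strictly positive and homogeneous of degree $-2$, hence the fibre shells vary continuously in $x$ and $F$ is continuous, and since $\int_{\partial D}\chi_{p,\delta}\,d\sigma=1$ and $\chi_{p,\delta}$ concentrates at $p$, the ratio tends to $F(p)/F(q)$ as $\delta\to0$. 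Using $\{H(x,\cdot)=E\}=E^{-1/2}\{H(x,\cdot)=1\}$ and the coarea formula (equivalently, the fibrewise version of the disintegration \eqref{disintegration}), one factors $F(x)=C(r,s,\alpha,d)\int_{\{H(x,\cdot)=1\}}|\xi|_{g(x)}^{1+2\alpha}\,d\sigma_{x,H}$ with a radial constant $C(r,s,\alpha,d)$ that is finite and independent of $x$, so it cancels and $F(p)/F(q)$ is exactly the right-hand side of \eqref{concentration1}; for $\alpha=-\tfrac12$ the weight $|\xi|^{1+2\alpha}$ is identically $1$ and the ratio is that of the volumes of the two characteristic varieties. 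A standard diagonal argument --- choose $\delta_n\downarrow0$ making the $\delta$-error below $1/n$, then $h_n\downarrow0$ making the $h$-limit attained to within $1/n$ for $h<h_n$, and set $\delta(h):=\delta_n$ on $(h_{n+1},h_n]$ --- then furnishes $\delta(h)\to0$ along which the full left-hand side of \eqref{concentration1} converges to the claimed limit.

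The step I expect to be the crux is the reduction in the first paragraph: establishing that the sub-principal and remainder contributions --- in particular the $\partial_x\chi_{x_0,\delta}$-terms, which are large ($\sim\delta^{-1}$) for small $\delta$ --- are $h$-subordinate to the principal-symbol term once they have been summed through \eqref{weyl}, so that they drop out in the $h\to+0$ limit taken before $\delta\to0$. This rests on tracking the semiclassical rescaling $\xi=h\eta$ through the symbol composition defining $|D|^{-1/2}\mathrm{Op}_{a_{x_0,\delta},h}|D|^{-1/2}$ (a classically lower-order operator produces a Weyl sum that is one power of $h$ smaller) together with a sufficiently quantitative version of how the $o_{r,s}(1)$ remainder in \eqref{weyl} depends on the symbol. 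Once this is secured, the remaining steps are routine measure theory and a homogeneity computation.
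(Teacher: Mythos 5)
Your proposal is correct and follows essentially the same route as the paper: choose the symbol $a_{x_0,\delta}(x,\xi)=\chi_{x_0,\delta}(x)\,|\xi|_{g(x)}^{1+2\alpha}$, peel off the semiclassical factor so that $\langle\mathrm{Op}_{a_{x_0,\delta},h}\phi_i(h),\phi_i(h)\rangle$ yields $h^{1+2\alpha}\int\chi_{x_0,\delta}\,||D|^\alpha\phi_i|^2$ plus an operator one classical order lower, invoke the generalized Weyl's law \eqref{weyl} on both the principal and remainder symbols, form the ratio so the prefactor cancels, and finally use the $(-2)$-homogeneity of $H$ to split the fibre integral into a radial constant times $\int_{\{H(y,\cdot)=1\}}|\xi|_{g(y)}^{1+2\alpha}\,d\sigma_{y,H}$. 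The only cosmetic difference is that you phrase the interchange of limits as ``$h\to 0$ at fixed $\delta$, then $\delta\to 0$, then diagonalize,'' whereas the paper directly prescribes $\delta(h)\to 0$ slowly enough to kill the $\delta$-dependent ($\sim h\,\delta^{-1}$) remainders; these are logically equivalent, and you correctly identify that the crux is precisely the $\delta$-dependence of the Weyl remainder, which both arguments handle by making $\delta$ shrink strictly slower than $h$.
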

\begin{proof}
Taking $p\in \partial D$, we consider $a(x,\xi) := \chi_{p,\delta}(x)  | \xi |^{1+ 2 \alpha}_{g(x)}  $ in \eqref{weyl}.
With this, together with the fact that $\mathrm{Op}_{a,h} = h^{1+ 2 \alpha}  | D |^{1/2+ \alpha} \mathrm{Op}_{\chi_{p,\delta}(x),h} | D |^{1/2+ \alpha} - h \mathrm{Op}_{\tilde{a}_{p,\delta},h} $ for some $\tilde{a}_{p,\delta} \in S^{2 \alpha}(T^*(\partial D)) $, we have
\begin{equation}\label{eq:nnn1}
\begin{split}
&  (2 \pi h )^{ (d  + 2 \alpha ) }  \sum_{r \leq \lambda_i^2 (h) \leq s} c_i  \int_{\partial D} \chi_{p,\delta}(x) | | D |^{\alpha}   \, \phi_i (x) |^2 d \sigma(x)  \\
= & \int_{ \{ r \leq H \leq  s\} } \chi_{p,\delta}(x)  | \xi |^{1+ 2 \alpha}_{g(x)}  \, d \sigma \otimes d \sigma^{-1} +  h \int_{ \{ r \leq H \leq  s\} } \tilde{a}_{p,\delta} \, d \sigma \otimes d \sigma^{-1} + o_{r,s,\alpha}(1) \\
\end{split}
\end{equation}
after applying \eqref{weyl} once more upon $\tilde{a}_{p,\delta}$.
With \eqref{eq:nnn1}, we have, after choosing another point $q \in \partial D$ and taking a quotient between the two, that
\beqnx
& & \frac{ \sum_{r \leq \lambda_i^2 (h) \leq s} c_i  \int_{\partial D} \chi_{p,\delta}(x) | | D |^{\alpha}   \, \phi_i (x) |^2 d \sigma (x) }{ \sum_{r \leq \lambda_i^2 (h) \leq s} c_i  \int_{\partial D} \chi_{q,\delta}(x) | | D |^{\alpha}   \, \phi_i (x) |^2 d \sigma (x) } \\
&=&  \frac{ \int_{ \{ r \leq H \leq  s\} } \chi_{p,\delta}(x)  | \xi |^{1+ 2 \alpha}_{g(x)}  \, d \sigma \otimes d \sigma^{-1} +  h \int_{ \{ r \leq H \leq  s\} } \tilde{a}_{p,\delta} \, d \sigma \otimes d \sigma^{-1} }{ \int_{ \{ r \leq H \leq  s\} } \chi_{q,\delta}(x)  | \xi |^{1+ 2 \alpha}_{g(x)}  \, d \sigma \otimes d \sigma^{-1} +  h \int_{ \{ r \leq H \leq  s\} } \tilde{a}_{q,\delta}\, d \sigma \otimes d \sigma^{-1}} + o_{r,s,\alpha}(1).  
\eqnx
Now, for any given $h$, we can make a choice of $\delta (h)$ depending on $r,s,p,q,\alpha$ such that as $h \rightarrow +0$, we have $\delta (h) \rightarrow 0$ (much slower than $h$) and 
\[
\left| h \int_{ \{ r \leq H \leq  s\} } \tilde{a}_{p,\delta (h)} \, d \sigma \otimes d \sigma^{-1}  \right| 
+  \left| h \int_{ \{ r \leq H \leq  s\} } \tilde{a}_{q,\delta (h)} \, d \sigma \otimes d \sigma^{-1} \right|  \rightarrow 0 \,.
\]
We also realize as $h \rightarrow +0$, with this choice of $\delta (h)$ that $\delta (h) \rightarrow 0$. One in fact has for $y = p,q$ that
\[
 \int_{ \{ r \leq H \leq  s\} } \chi_{y,\delta(h) }(x)  | \xi |^{1+ 2 \alpha}_{g(x)}  \, d \sigma \otimes d \sigma^{-1}  \rightarrow  \int_{ \{ r \leq H (y, \cdot) \leq  s\} } | \xi |^{1+ 2 \alpha}_{g(y)}  d \sigma^{-1} \,.
\]
Therefore, we have
\beqnx
\frac{ \sum_{r \leq \lambda_i^2 (h) \leq s} c_i  \int_{\partial D} \chi_{p,\delta(h) }(x) | | D |^{\alpha}   \, \phi_i (x) |^2 d \sigma(x) }{ \sum_{r \leq \lambda_i^2 (h) \leq s} c_i  \int_{\partial D} \chi_{q,\delta (h) }(x) | | D |^{\alpha}   \, \phi_i (x) |^2 d \sigma(x) } =  \frac{   \int_{ \{ r \leq H (p, \cdot) \leq  s\} } | \xi |^{1+ 2 \alpha}_{g(p)}  d \sigma^{-1}  }{   \int_{ \{ r \leq H (q, \cdot) \leq  s\} } | \xi |^{1+ 2 \alpha}_{g(q)}  d \sigma^{-1}  } + o_{r,s,p,q,\alpha}(1)  \,.
\eqnx
To conclude our theorem, we see that for all $y = p,q$,
\[
 \int_{ \{ r \leq H (y, \cdot) \leq  s\} } | \xi |^{1+ 2 \alpha}_{g(y)}  d \sigma^{-1} = \left(  \int_{r}^s E^{-\frac{1}{2} - \alpha - \frac{d}{2} } d E \right) \left(  \int_{ \{ H (y, \cdot) = 1 \} }  | \xi |_{g(y)}^{1+ 2 \alpha } d \sigma_{y,H} \right) \,.
\]

The proof is complete. 
\end{proof}

Theorem~\ref{theorem1} states that, given $p,q \in \partial D$, the relative magnitude between a $c_i$-weighted sum of a weighed average of $| |D|^{\alpha} \phi_i |^2$ over a small neighborhood of $p$ to that of $q$ asymptotically depends on the ratio between the weighted volume of $ \{ H (p, \cdot) =1 \} $ and that of $\{ H (q, \cdot) =1 \} $. This is critical for our subsequent analysis since it reduces our study to analyzing the aforementioned weighted volumes. 

\begin{Theorem}
\label{theorem2}
Under Assumption (A), there is a family of distributions $\{ \Phi_{\mu}  \}_{M_{X_{H},\text{erg}}} \in \mathcal{D}' ( \partial D \times \partial D)$ as the Schwartz kernels of $\mathcal{K} _{\mu}$ such that they form the following partition of the identity operator $Id$:
\begin{equation}\label{eq:pu1}
Id = \int_{M_{X_{H},\text{erg}}(\{ H = 1\})}  \mathcal{K}_{\mu} \,  d \nu \, (\mu ) \, ,
\end{equation}
in the weak operator topology satisfying that for any given $r, s$, there exists $S(h) \subset J(h) := \{i \in \mathbb{N} : r h \leq \lambda^2_i  \leq s h  \}$ such that for all $\varphi \in \mathcal{C}^\infty ( \partial D)$ and as $h\rightarrow +0$,
\begin{equation}\label{eq:pu2}
\max_{i \in S(h)}  \left|   \int_{ \partial D}  \varphi(x) \left( c_i \, | |D|^{-\frac{1}{2}} \phi_i (x) |^2 - \int_{M_{X_{H},\text{erg}}(\{ H = 1\})}  \mu (x)  g_i (\mu) d \nu(\mu)  \right) d \sigma(x)
 \right| = o_{r,s}(1)  \,.
\end{equation}
In \eqref{eq:pu1} and \eqref{eq:pu2},
\begin{equation}\label{eq:pu3}
\begin{split}
&\qquad g_{i} (\mu) :=  c_i \left \langle \mathcal{K}_{\mu}  |D|^{-\frac{1}{2}} \phi_i , |D|^{-\frac{1}{2}} \phi_i \right \rangle_{L^2(\partial D, d \sigma)} \,, \\
&  \int_{M_{X_{H},\text{erg}}(\{ H = 1\})}  g_i (\mu) d \nu(\mu) = 1 \, , \, \
\frac{\sum_{i \in S(h) } 1 }{ \sum_{i \in J(h) } 1 } = 1 + o_{r,s}(1) \,,
\end{split}
\end{equation}
and moreover,
\begin{equation}\label{eq:pu4}
\begin{split}
 &\qquad\quad \mu (p)  \geq 0 \,, \, \int_{\partial D} \mu (p)  d \mu(p) = 1\,, \, \\
  & \frac{ \int_{M_{X_{H},\text{erg}}(\{ H = 1\})}   \mu (p)  d \nu(\mu) }{ \int_{M_{X_{H},\text{erg}}(\{ H = 1\})}   \mu (q)  d \nu(\mu) }  =  \frac{ \int_{\{ H(p,\cdot)=1\}} d \sigma_{p,H} }{ \int_{\{ H(q,\cdot)=1\}} d \sigma_{q,H} } \text{ a.e. } (d \sigma \otimes d \sigma) (p,q) \,.
\end{split}
\end{equation}
\end{Theorem}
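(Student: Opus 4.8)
The plan is to build the family $\{\mathcal{K}_\mu\}$ directly from the ergodic decomposition of the Liouville measure on $\{H=1\}$ and then to identify $c_i|\,|D|^{-1/2}\phi_i(x)|^2$, tested against $\varphi$, with the quantum-ergodic average along the subsequence $S(h)$ furnished by the Corollary following Theorem~\ref{thm:ergo1}. First I would define, for each $\mu \in M_{X_H,\text{erg}}(\{H=1\})$ and each $\alpha$, the scalar $\bar{a}_\mu(x,\xi)$ given a.e. by $\int_{\{H=E\}} a_0\, d\mu_E$ via Lemma~\ref{birkroff}, and then let $\mathcal{K}_\mu := \mathrm{Op}_{\bar{a}_\mu,h}$ (more precisely, the operator whose symbol is the $X_H$-invariant function that is the $\mu_E$-average of the test symbol); the partition of identity \eqref{eq:pu1} then follows formally by integrating the ergodic-decomposition identity $\sigma_H/\sigma_H(\{H=1\}) = \int \mu\, d\nu(\mu)$ against the symbol of $Id$ and invoking the disintegration \eqref{disintegration}. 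The key point making this rigorous is that in the Corollary after Theorem~\ref{thm:ergo1} the density-one subsequence $S(h)$ is \emph{independent} of the test symbol $a_0$; a diagonal argument over a countable dense family of $\varphi$'s (in $\mathcal{C}^\infty(\partial D)$ with an appropriate topology) then yields a single $S(h)$ valid simultaneously, which is exactly what \eqref{eq:pu2} asserts.

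Next I would extract the two normalization identities in \eqref{eq:pu3}. The identity $\int g_i(\mu)\, d\nu(\mu) = 1$ comes from applying \eqref{eq:pu1} with test function $\varphi \equiv 1$ (equivalently taking $\mathcal{K}_\mu$ paired against $|D|^{-1/2}\phi_i$ on both sides) and using that $c_i = |\phi_i|_{H^{-1/2}}^{-2} = \|\,|D|^{-1/2}\phi_i\|_{L^2(\partial D,d\sigma)}^{-2}$ by \eqref{eq:eg1} and the remark that $\|\phi_i(h)\|_{L^2} = \|\phi_i\|_{H^{-1/2}}$, so $c_i\langle Id\, |D|^{-1/2}\phi_i, |D|^{-1/2}\phi_i\rangle = 1$. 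The density-one statement $\sum_{S(h)}1/\sum_{J(h)}1 = 1+o(1)$ is simply inherited verbatim from the Corollary after Theorem~\ref{thm:ergo1}, after matching the index set $J(h) = \{i : rh \le \lambda_i^2 \le sh\}$ with $\{i : r \le \lambda_i^2(h) \le s\}$ via \eqref{eq:eg1}.

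Then I would prove \eqref{eq:pu4}. The nonnegativity $\mu(x) \ge 0$ and $\int_{\partial D}\mu(p)\, d\mu(p) = 1$ should be read as properties of the projected measure: the diagonal value $\mu(x)$ of the Schwartz kernel, interpreted as the pushforward of $\mu$ under $T^*(\partial D) \ni (x,\xi) \mapsto x \in \partial D$, is a nonnegative measure of total mass one (since each $\mu$ is a probability measure on $\{H=1\}$). For the ratio identity, I would feed $a(x,\xi) = \chi_{p,\delta}(x)|\xi|_{g(x)}^{-1/2}$ (i.e. $\alpha = -1/2$) into Theorem~\ref{theorem1}: that theorem already shows the $c_i$-weighted neighborhood averages of $|\,|D|^{-1/2}\phi_i|^2$ at $p$ versus $q$ tend to $\int_{\{H(p,\cdot)=1\}} d\sigma_{p,H} \big/ \int_{\{H(q,\cdot)=1\}} d\sigma_{q,H}$. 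On the other hand, \eqref{eq:pu2} with $\varphi = \chi_{p,\delta(h)}$ and $\varphi = \chi_{q,\delta(h)}$ rewrites the same neighborhood averages (along $S(h)$, hence along a density-one subset — enough after averaging by Weyl's law \eqref{weyl2}) as $\int_{M_{X_H,\text{erg}}} \mu(p)\, g_i(\mu)\, d\nu(\mu)$ integrated over the bump; equating the two limits and letting $\delta(h) \to 0$ gives \eqref{eq:pu4} for a.e.\ $(p,q)$ (the ``a.e.'' is needed because the disintegration \eqref{disintegration} and the representative chosen in Lemma~\ref{birkroff} only pin down $\bar a$ up to null sets).

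The main obstacle I anticipate is the passage from the \emph{weak} quantum-ergodicity statement (an estimate on the subsequence $S(h)$, for a fixed $a_0$) to a \emph{kernel-valued} partition of identity that holds for all $\varphi$ simultaneously and that makes sense of the pointwise diagonal $\mu(x)$ and of $g_i(\mu)$ as a genuine $\nu$-integrable object. This requires (i) the uniformity of $S(h)$ in the test symbol — available because the Corollary's $S(h)$ is symbol-independent — combined with a separability/diagonal argument; and (ii) a Fubini-type justification that one may interchange the $\nu$-integral over ergodic measures with the operator pairing and with the $h\to 0$ limit, which rests on the measurability statement in the Remark after \eqref{disintegration} and on uniform bounds for $\mathrm{Op}_{\bar a_\mu,h}$ coming from Calder\'on--Vaillancourt applied to the $\mu_E$-averaged symbols (whose seminorms are controlled by those of the original symbol). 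Getting these two technical points to mesh cleanly, rather than the individual limits, is where the real work lies.
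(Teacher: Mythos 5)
Your broad strategy is right — define $\mathcal{K}_\mu$ from the ergodic decomposition of the Liouville measure, then use the symbol-independence of $S(h)$ in the quantum-ergodicity corollary — but the central construction of $\mathcal{K}_\mu$ as you state it does not work, and this is not a presentational gap but a genuine one.

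You propose $\mathcal{K}_\mu := \mathrm{Op}_{\bar{a}_\mu,h}$ with $\bar{a}_\mu(x,\xi) := \int_{\{H=E\}} a_0 \, d\mu_E$. This $\bar{a}_\mu$ is constant on each energy shell (a function of $H(x,\xi)$ alone), so the resulting operators cannot serve as a partition of the identity: integrating your $\mathcal{K}_\mu$ against $d\nu(\mu)$ reproduces $\mathrm{Op}_{\bar{a},h}$, not $Id$ (the two agree only in the trivial case $a_0\equiv 1$). Worse, $\mathcal{K}_\mu$ must be independent of any test symbol $a_0$, while your construction explicitly depends on it, and it must have enough $x$-localization for $\mu(p)$ (the diagonal of its kernel) to be meaningful — a symbol depending only on $H$ carries no such information. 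The paper instead takes $a_0 = \varphi(x)$ (depending only on the base variable), writes the Schwartz kernel of $\mathrm{Op}_{\bar\varphi,h}$ explicitly via \eqref{disintegration} as an $(E,\mu)$-integral of oscillatory integrals against $\mu_E$, and observes that for such $\varphi$ the $X_H$-invariant average factors as $\int_{\{H=1\}}\varphi\,d\mu$, a constant that pulls out of the inner integral. What is left behind after the factor $\int\varphi\,d\mu$ is removed is a $\varphi$-independent, $h$-independent kernel; \emph{that} is $\mathcal{K}_\mu$. One then recovers \eqref{eq:pu1} by taking $\varphi\equiv 1$, and \eqref{eq:pu2} by pairing against $f = |D|^{-1/2}\phi_i$ and invoking the corollary to Theorem~\ref{thm:ergo1}.

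The second missing ingredient is the disintegration of $d\mu(x,\xi)\,d\nu(\mu)$ over the base variable: the paper disintegrates this measure as $d\mu_p\,(d\nu\otimes d\sigma)(\mu,p)$ and sets $\mu(p) := \mu_p(\{H=1\})$. This makes $\mu(p)$ a bona fide $d\nu\otimes d\sigma$-measurable function with $\int_{\partial D}\mu(p)\,d\sigma(p) = 1$ a.e.\ $d\nu$, and gives the ratio identity in \eqref{eq:pu4} by a direct computation of $\int \mu(p)\,d\nu(\mu)$ from the ergodic decomposition of $\sigma_H$. Your alternative route to the ratio — matching Theorem~\ref{theorem1} against \eqref{eq:pu2} tested on bumps $\chi_{p,\delta(h)}$ — could in principle work after a Lebesgue-density argument and a correction for $J(h)\setminus S(h)$, but it presupposes \eqref{eq:pu2} and hence cannot substitute for constructing $\mathcal{K}_\mu$ and $\mu(p)$ in the first place. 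Note also that your reading of $\mu(p)$ as the pushforward density of $\mu$ is essentially right in spirit, but without the disintegration of the \emph{family} $\{\mu\}$ parametrized by $M_{X_H,\mathrm{erg}}$ one cannot justify that the density exists and is jointly measurable, which is what the ``a.e.\ $d\nu\otimes d\sigma$'' bookkeeping in \eqref{eq:pu4} requires.
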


\begin{proof}
Let $f, \varphi \in \mathcal{C}^{\infty}(\partial D)$ be given.  Let us consider $a(x,\xi) := \varphi(x) $. Then we have 
\[
 \int_{\{ H=E\} } \varphi d \mu_E  =  \int_{\{ H=1\} } \varphi d \mu .
 \]  
Take a partition of unity $\{\chi_i\}$ on $\{U_i\}$. With an abuse of notation via identification of points with the local trivialisation $\{F_i\}$, we have by Lemmas \ref{full_decomposition} and \ref{birkroff} that
\begin{equation}\label{eq:pp1}
\begin{split}
& [ \sigma_H (\{ H = 1 \}) ]^{-1} [ \text{Op}_{\bar{\varphi} , h} f ](y) \\
=& \sum_l [ \sigma_H (\{ H = 1 \}) ]^{-1} [ \text{Op}_{\bar{\varphi} \chi_l , h} f ](y) \\
=& \int_{ (0,\infty] \times M_{X_{H},\text{erg}} (  \{ H = 1 \}  ) } \sum_l \left(  \int_{\{H=E\}} \exp(x-y, \xi / h )  \bar{a}(x,\xi)  \chi_l (x) f (x)  d \mu_E \right) \\
&\hspace*{4cm}\times  E^{1 - \frac{d}{2} } \, (d E  \otimes  d \nu ) \, ( E, \mu ) \\
=&  \int_{ (0,\infty] \times M_{X_{H},\text{erg}} (  \{ H = 1 \}  ) } \sum_l \left( \int_{\{ H=1\} } \varphi d \mu \right ) \left( \int_{\{H=E\}} \exp(x-y, \xi / h )   \chi_l (x) f (x)  d \mu_E \right) \\
&\hspace*{4cm} \times  E^{1 - \frac{d}{2} } \, ( d E  \otimes  d \nu ) \, ( E, \mu ) .
\end{split}
\end{equation}
On the other hand, considering $Id = \mathrm{Op}_{1,h} = \text{Op}_{ \bar{1}, h}$ (which is independent of $h$), we can show that
\beqnx
&  & [ \sigma_H (\{ H = 1 \}) ]^{-1}   [ \text{Op}_{ 1, h} f ](y)  \\
&=&  \int \limits_{ (0,\infty] \times M_{X_{H},\text{erg}} (  \{ H = 1 \}  ) }  \sum_l  \left( \int_{\{H=E\}} \exp(x-y, \xi / h )   \chi_l (x) f (x)  d \mu_E \right)       \, E^{1 - \frac{d}{2} }   \, ( d E  \otimes  d \nu ) \, ( E, \mu )  \,.
\eqnx
If we define $ \mathcal{K}_{\mu}$ (which is again independent of $h$) to be such that
\beqnx
[ \sigma_H (\{ H = 1 \}) ]^{-1}   [ \mathcal{K}_{\mu} f ](y)  =  \int_{ (0,\infty]  }  \sum_l \left( \int_{\{H=E\}} \exp(x-y, \xi / h )   \chi_l (x) f (x)  d \mu_E \right)       \, E^{1 - \frac{d}{2} }   \, d E\, ( E )  \, ,
\eqnx
then we have by definition that
\[
Id = \int_{M_{X_{H},\text{erg}}(\{ H = 1\})}  \mathcal{K}_{\mu} \,  d \nu \, (\mu ) \, 
\]
in the weak operator topology. That is, 
\[
 \langle f , f  \rangle_{L^2(\partial D , d \sigma)} =  \int_{M_{X_{H},\text{erg}}(\{ H = 1\})}  \langle  \mathcal{K}_{\mu}  f , f  \rangle_{L^2(\partial D , d \sigma)}  \,  d \nu \, (\mu ) \,,
\]
and 
\beqnx
 \langle \text{Op}_{  \overline{ \varphi }  , h} f , f  \rangle_{L^2(\partial D , d \sigma)}  =   \int_{M_{X_{H},\text{erg}}(\{ H = 1\})}  
 \int_{\{ H=1\} }  \varphi \, \langle  \mathcal{K}_{\mu} f , f \rangle_{L^2(\partial D , d \sigma)}   \, d \mu \,  d \nu \, (\mu ) \,.
\eqnx
Recall that $d \sigma_{H}(x,\xi) / \sigma_H (\{ H = 1 \}) =  d \mu(x,\xi) \,  d \nu (\mu) $ is a probability measure. We now apply the disintegration theorem to the measure $d \mu (x,\xi) \, d \nu (\mu) $ and obtain a disintegration $d \mu_p (x,\xi) \, d\nu(\mu) \otimes d \sigma(p) $, where the measure-valued map $ ( \mu , p ) \mapsto \mu_p$ is a $d\nu \otimes d \sigma $ measurable function together with $\mu_p \left( \{ H = 1 \} \backslash (  \{ H(p,\cdot) = 1 \} \bigcap \text{spt}(\mu) ) \right) = 0 $ a.e. $d\nu \otimes d \sigma$.  Therefore, we obtain
\beqnx
 \langle \text{Op}_{  \overline{ \varphi }  , h} f , f  \rangle_{L^2(\partial D , d \sigma)} &= &  \int_{\partial D}  \int_{M_{X_{H},\text{erg}}(\{ H = 1\})}  
 \int_{\{ H=1\} }  \varphi \, \langle  \mathcal{K}_{\mu} f , f \rangle_{L^2(\partial D , d \sigma)}   \, d \mu_p \,  (d\nu \otimes d \sigma) \, (\mu ,p ) \,.
\eqnx
We next observe that
\[
\int_{\{ H=1\} }  \varphi \, d \mu_p  =  \int_{\{ H(p,\cdot)=1\} }  \varphi \, d \mu_p = \varphi(p) \, \mu_p( \{ H=1\} ) .
\]
If we denote
\[
\mu(p) := \mu_p( \{ H=1\} ) \geq 0 \,,
\]
then a.e. $d \nu (\mu)$, the function $\mu(p) \in L^1(\partial \Omega, d \sigma)$. As a result of the disintegration, we have a.e. $d \nu (\mu)$,
\[
\int_{\partial \Omega} \mu(p) d \sigma(p) = \mu(\{H = 1\}) = 1 \,.
\]
Furthermore, we have
\beqnx
 \langle \text{Op}_{  \overline{ \varphi }  , h} f , f  \rangle_{L^2(\partial D , d \sigma)} &= &  \int_{\partial D}  \int_{M_{X_{H},\text{erg}}(\{ H = 1\})}  
 \varphi(x) \, \mu(x) \, \langle  \mathcal{K}_{\mu} f , f \rangle_{L^2(\partial D , d \sigma)} \,  (d\nu \otimes d \sigma) \, (\mu ,x ) \,.
\eqnx

Now, we choose $f = \phi_i (h) = |D|^{-\frac{1}{2}} \phi_i$ and apply \eqref{ergodicity} to obtain the conclusion of our theorem. It is noted that the choice of $S(h)$ is independent of $\varphi \in \mathcal{C}^{\infty} (\partial D)$.
The ratio in the last line of the theorem comes from the fact that a.e. $ d \sigma(p)$ we have by definition
\[
 \int_{M_{X_{H},\text{erg}}(\{ H = 1\})}   \mu (p)  d \nu(\mu) =  \int_{M_{X_{H},\text{erg}}(\{ H = 1\})}   \mu_p( \{ H(p,\cdot)=1\} ) d \nu(\mu) =  \frac{ \int_{\{ H(p,\cdot)=1\}} d \sigma_{p,H}  }{ \sigma_H(\{H =1\})} \,.
\]

The proof is complete. 
\end{proof}

Theorem~\ref{theorem2} indicates that most of the function $ c_i ||D|^{-\frac{1}{2}} \phi_i |^2$ weakly converges to a $g_{i} (\mu) \, d \nu(\mu)$-weighted average of  $ \mu(p) $,
where the ratio between a $d \nu(\mu)$-weighted average of $ \mu(p) $ and that of $ \mu(q) $ 
depends on the ratio between the volume of $ \{ H (p, \cdot) =1 \} $ and that of $\{ H (q, \cdot) =1 \} $.

For the sake of completeness, we also give the original version of the quantum ergodicity:
\begin{Corollary}
\label{corollary3}
Under Assumption (A), if the Hamiltonian flow given by $X_H$ is furthermore ergodic with respect to $\sigma_{H}$ on $ \{ H = 1 \}$, then given $r,s$ , there exists $S(h) \subset J(h) := \{i \in \mathbb{N} : r h \leq \lambda^2_i  \leq s h  \}$, such that for all $\varphi \in \mathcal{C}^{\infty} (\partial D)$ and as $h\rightarrow +0$,
\begin{equation}
\begin{split}
& \max_{i \in S(h)} \left|   \int_{ \partial D }  \varphi(x) \left( c_i | |D|^{-\frac{1}{2}} \phi_i (x) |^2 - \frac{ \int_{\{ H(x,\cdot)=1\}} d \sigma_{x,H}  }{ \sigma_H(\{H =1\})}  \right) d \sigma(x)
 \right| = o_{r,s}(1),\\
&\hspace*{3cm}\frac{\sum_{i \in S(h) } 1 }{ \sum_{i \in J(h) } 1 } = 1 + o_{r,s}(1) \,.
\end{split}
\end{equation}
\end{Corollary}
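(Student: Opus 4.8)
The plan is to obtain Corollary~\ref{corollary3} as the degenerate case of Theorem~\ref{theorem2}: when the Hamiltonian flow of $X_H$ is ergodic with respect to the Riemannian measure $\sigma_H$ on $\{H=1\}$, the ergodic decomposition of $\sigma_H/\sigma_H(\{H=1\})$ supplied by Proposition~\ref{full_decomposition} collapses to a single point, the operators $\{\mathcal{K}_\mu\}$ and the scalars $\{g_i(\mu)\}$ appearing in Theorem~\ref{theorem2} trivialize, and what is left is precisely the two displayed estimates of the corollary.

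The one genuinely new input is a soft fact from ergodic theory. First I would recall that the set $M_{X_H}(\{H=1\})$ of $X_H$-invariant probability measures is a Choquet simplex whose extreme points are exactly the ergodic measures; hence the barycentric (ergodic-decomposition) measure $\nu$ produced by Proposition~\ref{full_decomposition} is unique, and when applied to an \emph{extreme} point it is the Dirac mass at that point. Therefore, under the ergodicity hypothesis, writing $\mu_0 := \sigma_H/\sigma_H(\{H=1\})$, one has $\nu = \delta_{\mu_0}$ in $M(M_{X_H,\text{erg}}(\{H=1\}))$.

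Next I would substitute $\nu = \delta_{\mu_0}$ into the conclusions of Theorem~\ref{theorem2}. From \eqref{eq:pu1}, $Id = \int \mathcal{K}_\mu\, d\nu(\mu) = \mathcal{K}_{\mu_0}$ in the weak operator topology, so $\mathcal{K}_{\mu_0} = Id$; then the definition of $g_i$ in \eqref{eq:pu3} gives
\[
g_i(\mu_0) = c_i \langle |D|^{-\frac12}\phi_i, |D|^{-\frac12}\phi_i \rangle_{L^2(\partial D, d\sigma)} = c_i \|\phi_i\|_{H^{-1/2}(\partial D, d\sigma)}^2 = 1,
\]
using $\||D|^{-1/2}\phi_i\|_{L^2} = \|\phi_i\|_{H^{-1/2}}$ (as in the Weyl-law proof) and $c_i = \|\phi_i\|_{H^{-1/2}}^{-2}$, consistently with $\int g_i\, d\nu = 1$. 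The disintegration of $\mu_0$ over $\partial D$ carried out at the end of the proof of Theorem~\ref{theorem2} gives, a.e.\ $d\sigma(p)$,
\[
\int_{M_{X_H,\text{erg}}(\{H=1\})} \mu(p)\, d\nu(\mu) = \mu_0(p) = (\mu_0)_p(\{H(p,\cdot)=1\}) = \frac{\int_{\{H(p,\cdot)=1\}} d\sigma_{p,H}}{\sigma_H(\{H=1\})}.
\]
Plugging $\int \mu(x) g_i(\mu)\, d\nu(\mu) = \mu_0(x)\cdot 1$ into \eqref{eq:pu2}, and retaining the same density-one set $S(h)\subset J(h)$ (which Theorem~\ref{theorem2} selects independently of $\varphi$), yields exactly the two estimates of Corollary~\ref{corollary3}.

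Equivalently, the computation can be run directly through the quantum ergodicity corollary following Theorem~\ref{thm:ergo1}, taking $a_0(x,\xi)=\varphi(x)$ and $A_h=\mathrm{Op}_{\varphi,h}$: ergodicity of $\mu_0$ forces the time-average $\bar a$ of Lemma~\ref{birkroff} to agree a.e.\ with the spatial average of $\varphi$ against $\mu_0$, which after disintegrating $\sigma_H$ over $\partial D$ is the energy-independent constant $\big(\int_{\partial D}\varphi(y)\int_{\{H(y,\cdot)=1\}} d\sigma_{y,H}\, d\sigma(y)\big)\big/\sigma_H(\{H=1\})$; hence $\mathrm{Op}_{\bar a,h}=\bar a\cdot Id$ modulo $\mathcal{O}(h\,\Phi\mathrm{SO}_h^{-\infty})$, so \eqref{ergodicity} reads $c_i\langle \mathrm{Op}_{\varphi,h}\phi_i(h),\phi_i(h)\rangle = \bar a + o(1)$ on $S(h)$, while $\langle \mathrm{Op}_{\varphi,h}\phi_i(h),\phi_i(h)\rangle = \int_{\partial D}\varphi\,||D|^{-1/2}\phi_i|^2\, d\sigma + \mathcal{O}(h)$. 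The only genuine obstacle (and it is essentially routine) is the first step: pinning down that $X_H$-ergodicity of $\sigma_H$ makes $\nu$ a point mass. Everything else is substitution into Theorem~\ref{theorem2} together with re-use of its disintegration identity, and one should merely double-check that the normalization $\int_{\partial D}\mu_0(p)\, d\sigma(p)=1$ from \eqref{eq:pu4} is consistent with the co-area conventions, so that $\sigma_H(\{H=1\}) = \int_{\partial D}\int_{\{H(p,\cdot)=1\}} d\sigma_{p,H}\, d\sigma(p)$, which is what makes the right-hand side of the corollary a bona fide probability density tested against $\varphi$.
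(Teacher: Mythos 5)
Your proof is correct and follows essentially the same route as the paper's proof of Corollary~\ref{corollary3}: under the ergodicity hypothesis one specializes $\nu$ in Theorem~\ref{theorem2} to the Dirac mass at the normalized measure $\sigma_H/\sigma_H(\{H=1\})$, whence $\int \mu(x)\,d\nu(\mu)$ reduces to $\int_{\{H(x,\cdot)=1\}} d\sigma_{x,H}\big/\sigma_H(\{H=1\})$. The extra observations you supply (Choquet uniqueness forcing $\nu$ to be a point mass, the check $\mathcal{K}_{\mu_0}=Id$ so $g_i(\mu_0)=1$, and the alternative derivation directly from \eqref{ergodicity} with $\bar a$ constant) are consistent elaborations of the same argument rather than a different route.
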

\begin{proof}
The conclusion follows by noting that if $X_H$ is ergodic with respect to $\sigma_H$, then $\sigma_H \in M_{X_{H},\text{erg}} (  \{ H = 1 \}  )$ and we can take $ \nu = \delta_{\sigma_H} $ which is the Dirac measure of $\sigma_H \in M_{X_{H},\text{erg}} (  \{ H = 1 \}  )$. In this case, we obtain
\[
\begin{split}
& \int_{M_{X_{H},\text{erg}}(\{ H = 1\})}   \mu (x)  d \nu(\mu)  =  \int_{M_{X_{H},\text{erg}}(\{ H = 1\})}   \mu (x)  d \delta_{\sigma_H} (\mu)\\
 =&  \sigma_H (x) = \int_{\{ H(x,\cdot)=1\}} d \sigma_{x,H}  \big/ \int_{\{ H=1\}} d \sigma_H .
 \end{split}
\]
\end{proof}
By Corollary~\ref{corollary3}, we see that if $X_H$ is ergodic with respect to $\sigma_H$, most of the function $ c_i ||D|^{-\frac{1}{2}} \phi_i |^2$ weakly converges to the volume of the characteristic variety $ \{ H (x, \cdot) =1 \} $ up to a constant.
It is interesting to remark that we expect that the above argument can be extended to the comparison between $ c_i ||D|^{\alpha} \phi_i |^2$, and we choose to investigate along that direction in a future study.

\subsection{Localization/concentration of plasmon resonance at high-curvature points}

From Theorems~\ref{theorem1} and \ref{theorem2} in the previous subsection, it is clear that the relative magnitude of the NP eigenfunction $\phi_i $ at a point $x$  depends on the (weighted) volume of the characteristic variety $\{H(x,\cdot) =1\}$.   Therefore, in order to understand the localization of plasmon resonance, it is essential to obtain a better description of this volume.  It turns out that this volume heavily depends on the magnitude of the second fundamental forms $A(x)$ at the point $x$.   As we will see in this subsection, in general, the higher the magnitude of the second fundamental forms $A(x)$ is, the larger the volume of the characteristic variety becomes. In particular, in a relatively simple case when the second fundamental forms at two points are constant multiple of each other, we have the following volume comparison.
\begin{Lemma}
\label{theorem3}
Let $p,q \in \partial D$ be such that $A(p) = \beta A(q) $ for some $\beta > 0$ and $g(p) = g(q)$. Then $|\{ H(p,\cdot) = 1 \} | = \beta^{d-2} |\{ H(q,\cdot) = 1\} | $.    We also have $$ \int_{ \{ H (p, \cdot) =1 \} }  | \xi |_{g(p)}^{1+ 2 \alpha }  d \sigma_{p,H} = \beta^{d- 1+ 2 \alpha } \int_{ \{ H (q, \cdot) =1 \} }  | \xi |_{g(q)}^{1+ 2 \alpha }   d \sigma_{q,H} . $$
\end{Lemma}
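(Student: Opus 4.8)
The plan is to exploit the explicit scaling structure of the Hamiltonian $H$ under the hypotheses $A(p)=\beta A(q)$ and $g(p)=g(q)$. Recall from \eqref{eq:hf1} and \eqref{eq:s1} that, in the geodesic normal coordinates at a point $x$,
\[
H(x,\xi) = \left[ (d-1)\mathcal{H}(x)\,|\xi|^{-1} - \langle \mathcal{A}(x)\,\xi,\xi\rangle\,|\xi|^{-3} \right]^2 .
\]
Since $g(p)=g(q)$ we may use the same coordinate chart, and since $(d-1)\mathcal{H}(x) = \operatorname{tr}_{g}(\mathcal{A}(x))$, the assumption $\mathcal{A}(p)=\beta\mathcal{A}(q)$ forces $\mathcal{H}(p)=\beta\mathcal{H}(q)$ as well. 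Hence $p_{\mathcal{K}^*,-1}(p,\xi) = \beta\, p_{\mathcal{K}^*,-1}(q,\beta^{-1}\xi)$ by inspecting how each term transforms under $\xi \mapsto \beta^{-1}\xi$: the mean-curvature term scales as $(d-1)\mathcal{H}(p)|\xi|^{-1} = \beta(d-1)\mathcal{H}(q)|\xi|^{-1}$, and the second-fundamental-form term as $\langle \mathcal{A}(p)\xi,\xi\rangle|\xi|^{-3} = \beta \langle \mathcal{A}(q)\xi,\xi\rangle|\xi|^{-3}$; so after the substitution one reads off $H(p,\xi) = \beta^2\, H(q,\beta^{-1}\xi)$.

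First I would record the consequence of this identity for the level sets: the map $\Psi_\beta\colon \xi \mapsto \beta^{-1}\xi$ on $T_p^*(\partial D)\cong T_q^*(\partial D)$ carries $\{H(p,\cdot)=1\}$ onto $\{H(q,\cdot)=\beta^{-2}\}$. Combined with the rescaling relation $\{H=E\} = E^{-1/2}\{H=1\}$ already used before \eqref{disintegration} (applied fibrewise at the point $q$), we get that $\{H(p,\cdot)=1\}$ is the image of $\{H(q,\cdot)=1\}$ under the linear map $\xi \mapsto \beta^{-1}\cdot\beta\,\xi$... more carefully: $\{H(q,\cdot)=\beta^{-2}\} = \beta\{H(q,\cdot)=1\}$, so $\{H(p,\cdot)=1\} = \Psi_\beta^{-1}\big(\beta\{H(q,\cdot)=1\}\big) = \beta^2\{H(q,\cdot)=1\}$. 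Wait — I would double-check the direction: $\Psi_\beta$ sends $\{H(p,\cdot)=1\}$ to $\{H(q,\cdot)=\beta^{-2}\}$ means $\{H(p,\cdot)=1\} = \Psi_\beta^{-1}(\{H(q,\cdot)=\beta^{-2}\}) = \beta\cdot\beta^{-2}\{H(q,\cdot)=1\} = \beta^{-1}\{H(q,\cdot)=1\}$. The precise constant is exactly what needs to be pinned down in the writeup; I will fix it by checking a concrete point (e.g. tracking $|\xi|$ along a fixed direction), since the homogeneity degrees are what carry the argument.

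Second, I would transfer this homothety statement to the measures. Since $\sigma_{p,H}$ is the Riemannian (hypersurface) measure on the codimension-one variety $\{H(p,\cdot)=1\}\subset T_p^*(\partial D)$ induced from the flat metric $g(p)=g(q)$, and the two varieties differ by a dilation by a factor $c(\beta)$ (with $c(\beta)$ a power of $\beta$ determined above), the $(d-2)$-dimensional measures satisfy $|\{H(p,\cdot)=1\}| = c(\beta)^{d-2}\,|\{H(q,\cdot)=1\}|$, giving the first claim with $c(\beta)=\beta$. For the second claim, I would push the integral $\int_{\{H(p,\cdot)=1\}} |\xi|_{g(p)}^{1+2\alpha}\,d\sigma_{p,H}$ through the same dilation: the integrand picks up a factor $c(\beta)^{1+2\alpha}$ from $|\xi|^{1+2\alpha}$ and the surface measure picks up $c(\beta)^{d-2}$, for a total of $c(\beta)^{d-1+2\alpha} = \beta^{d-1+2\alpha}$, as stated.

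The main obstacle — really the only subtle point — is bookkeeping the exponent of $\beta$ correctly through the chain of rescalings (the $\xi\mapsto\beta^{-1}\xi$ from the symbol identity versus the $E^{-1/2}$ fibre-rescaling), and making sure the Riemannian surface measure $\sigma_{x,H}$ transforms as a genuine $(d-2)$-dimensional Hausdorff-type measure under the linear dilation rather than as, say, the Leray/Gelfand measure $dE\otimes d\sigma_H$ that appeared in \eqref{disintegration} (these differ by a Jacobian factor involving $|\nabla H|$, which itself scales, so one must be consistent). I would handle this by working directly with the coarea/Riemannian-measure definition of $\sigma_{x,H}$ and verifying the scaling on that definition, rather than going through the disintegration formula. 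Everything else is a direct substitution using $g(p)=g(q)$ and the homogeneity of the principal symbol.
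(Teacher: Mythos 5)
Your overall approach (use homogeneity of $H$ to identify the two characteristic varieties up to a dilation, then scale the surface measure) is exactly the paper's; but the derivation of the dilation factor contains a computational error that you notice, try to repair, and in fact never resolve.

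The symbol identity $p_{\mathcal{K}^*,-1}(p,\xi) = \beta\, p_{\mathcal{K}^*,-1}(q,\beta^{-1}\xi)$ is wrong: it double-counts the scaling. From $\mathcal{A}(p)=\beta\mathcal{A}(q)$, $g(p)=g(q)$, and $(d-1)\mathcal{H}=\operatorname{tr}_g\mathcal{A}$, one reads off directly (no rescaling of $\xi$) that $p_{\mathcal{K}^*,-1}(p,\xi)=\beta\,p_{\mathcal{K}^*,-1}(q,\xi)$, hence $H(p,\xi)=\beta^2 H(q,\xi)$. Only \emph{then} does $-2$-homogeneity of $H$ enter, to rewrite this as $H(p,\xi)=H(q,\beta^{-1}\xi)$. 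Your version $H(p,\xi)=\beta^2 H(q,\beta^{-1}\xi)$ applies both the $\beta^2$ prefactor \emph{and} the $\xi\mapsto\beta^{-1}\xi$ rescaling, but by homogeneity those are the same thing, so your identity equals $\beta^4 H(q,\xi)$, which is off by $\beta^2$. This propagates: your first computation gives $\{H(p,\cdot)=1\}=\beta^2\{H(q,\cdot)=1\}$, your second-guess gives $\beta^{-1}$ (and that second computation also misstates $\{H(q,\cdot)=\beta^{-2}\}$ as $\beta^{-2}\{H(q,\cdot)=1\}$ rather than the correct $\beta\{H(q,\cdot)=1\}$). The correct dilation is $\{H(p,\cdot)=1\}=\beta\{H(q,\cdot)=1\}$, which you ultimately assert as ``$c(\beta)=\beta$'' but never derive; you are working backward from the statement rather than pinning down the constant.

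Once the dilation factor $\beta$ is in hand, your final step — that a $(d-2)$-dimensional surface measure picks up $\beta^{d-2}$ and the extra weight $|\xi|^{1+2\alpha}$ contributes $\beta^{1+2\alpha}$ — is correct and is what the paper uses. Your caution about distinguishing the intrinsic Riemannian hypersurface measure from a Leray-type measure $d\sigma\otimes d\sigma^{-1}/|\nabla H|$ is also well placed (they scale differently under dilation since $|\nabla H|$ itself rescales), and your resolution of working directly with the Riemannian definition is the right one. But the proof as written does not actually establish the lemma: the one genuine content of the statement is the exponent of $\beta$, and your argument produces two mutually inconsistent wrong values for it before deferring. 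You should replace your symbol identity with $H(p,\xi)=\beta^2 H(q,\xi)=H(q,\beta^{-1}\xi)$ and deduce $\{H(p,\cdot)=1\}=\beta\{H(q,\cdot)=1\}$ from it; the rest then goes through.
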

\begin{proof}
From $-2$ homogeneity of $H$, we have $H(p,\xi  ) = H(q,\xi / \beta )$, and therefore 
$\{ H(p,\xi)  = 1 \}  = \beta \{ H(q,\xi)\} = 1 \} $, which readily yields the conclusion of the theorem. 
\end{proof}

A better understanding of the localization can be achieved by a more delicate volume comparison of the characteristic variety at different points with the help of Theorems \ref{theorem1} and \ref{theorem2} and Corollary \ref{corollary3}.  However, it is less easy to give a more explicit comparison of the volumes between $\{H(p,\cdot) =1\}$ and $\{H(q,\cdot) =1\}$ by their respective second fundamental forms $A(p)$ and $A(q)$. The following lemma provides a detour to control how the (weighted) volume of $\{H(p,\cdot) =1\}$ depends on the principal curvatures $\{ \kappa_i(p) \}_{i=1}^{d-2}$.
\begin{Lemma}
\label{theorem_volume}
Let $F : \mathbb{R}^{d-2} \rightarrow \mathbb{R}$ be given as
\begin{equation}\label{eq:d1}
F_{\alpha} \left( \{ \kappa_i \}_{i=1}^{1-2} \right)  := \int_{\mathbb{S}^{d-2}}   \left| \sum_{i=1}^{d-1} \widetilde{\kappa_i } \omega_i^2 \right|^{d-1 + 2 \alpha} \sqrt{ \sum_{i=1}^{d-1} \widetilde{\kappa_i }^2 \omega_i^2 }\ d \omega,
\end{equation}
where 
\begin{equation}\label{eq:d2}
 \widetilde{\kappa_i } := \sum_{j=1}^{d-1} \kappa_j - \kappa_i. 
\end{equation}
Then we have the following inequality:
\begin{equation}\label{eq:d3}
F_{\alpha} \left( \{ \kappa_i(p) \}_{i=1}^{1-2} \right)  \leq \int_{ \{ H(p,\cdot) = 1 \} } | \xi |_{g(p)}^{1+2\alpha} \, d \sigma_{p,H} \leq 2 F_{\alpha} \left( \{ \kappa_i(p) \}_{i=1}^{d-2} \right).
\end{equation}
\end{Lemma}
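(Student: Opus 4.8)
The plan is to reduce the middle term of \eqref{eq:d3} to an explicit integral over the unit sphere $\mathbb{S}^{d-2}$ and then to squeeze the resulting radical by a single application of the Cauchy--Schwarz inequality, which is precisely what produces the constant $2$.

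First I would fix geodesic normal coordinates at $p$, so that the metric is Euclidean there and $|\xi|_{g(p)}=|\xi|$, and rotate them so that $\mathcal{A}(p)$ is diagonal, with diagonal entries the principal curvatures $\kappa_1,\dots,\kappa_{d-1}$ of $\partial D$ at $p$. Since $(d-1)\mathcal{H}(p)=\operatorname{tr}_{g(p)}\mathcal{A}(p)=\sum_j\kappa_j$, the symbol expansion \eqref{eq:s1} gives in these coordinates $p_{\mathcal{K}^*_{\partial D},-1}(p,\xi)=|\xi|^{-3}\sum_{i=1}^{d-1}\widetilde{\kappa_i}\,\xi_i^{2}$ with $\widetilde{\kappa_i}$ as in \eqref{eq:d2}, hence
\[
H(p,\xi)=|\xi|^{-6}\Bigl(\sum_{i=1}^{d-1}\widetilde{\kappa_i}\,\xi_i^{2}\Bigr)^{2},
\]
which is homogeneous of degree $-2$ in $\xi$. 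Writing $\xi=r\omega$ with $r>0$ and $\omega\in\mathbb{S}^{d-2}$ and setting $P(\omega):=\sum_{i=1}^{d-1}\widetilde{\kappa_i}\,\omega_i^{2}$, the equation $H(p,\xi)=1$ reads $r=|P(\omega)|$. Since Assumption (A) is exactly the statement that $P$ has no zero on $\mathbb{S}^{d-2}$, and $\mathbb{S}^{d-2}$ is connected (we are in the regime $d\ge 3$, the only one in which $H\not\equiv 0$), $P$ keeps a fixed sign, $R:=|P|$ is smooth and strictly positive, and $\{H(p,\cdot)=1\}$ is the smooth star-shaped hypersurface $\Sigma_p:=\{\,R(\omega)\,\omega:\omega\in\mathbb{S}^{d-2}\,\}\subset T^{*}_p(\partial D)\cong\mathbb{R}^{d-1}$, parametrised diffeomorphically by $\mathbb{S}^{d-2}$.

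Next I would compute the Riemannian surface measure $\sigma_{p,H}$ on $\Sigma_p$ from this parametrisation. For a radial graph $\omega\mapsto R(\omega)\omega$ in $\mathbb{R}^{d-1}$ the first fundamental form equals $R^{2}h+dR\otimes dR$, with $h$ the round metric on $\mathbb{S}^{d-2}$, so the matrix determinant lemma gives $d\sigma_{p,H}=R^{\,d-3}\sqrt{R^{2}+|\nabla_{\mathbb{S}^{d-2}}R|^{2}}\;d\omega$. Regarding $P$ as the restriction to $\mathbb{S}^{d-2}$ of the quadratic form $\xi\mapsto\sum_i\widetilde{\kappa_i}\xi_i^{2}$, its spherical gradient is $\nabla_{\mathbb{S}}P(\omega)=2\bigl(\widetilde{\kappa_i}\omega_i\bigr)_{i=1}^{d-1}-2P(\omega)\,\omega$, whence
\[
|\nabla_{\mathbb{S}}R|^{2}=|\nabla_{\mathbb{S}}P|^{2}=4\Bigl(\sum_i\widetilde{\kappa_i}^{2}\omega_i^{2}-P(\omega)^{2}\Bigr),\qquad R^{2}+|\nabla_{\mathbb{S}}R|^{2}=4\sum_i\widetilde{\kappa_i}^{2}\omega_i^{2}-3P(\omega)^{2}.
\]
Using $|\xi|_{g(p)}=R(\omega)=|P(\omega)|$ on $\Sigma_p$, this yields the exact identity
\[
\int_{\{H(p,\cdot)=1\}}|\xi|_{g(p)}^{1+2\alpha}\,d\sigma_{p,H}=\int_{\mathbb{S}^{d-2}}|P(\omega)|^{\,d-2+2\alpha}\,\sqrt{4\sum_i\widetilde{\kappa_i}^{2}\omega_i^{2}-3P(\omega)^{2}}\;d\omega,
\]
which I would cross-check against the scaling $\mathcal{A}(p)=\beta\mathcal{A}(q)$ of Lemma~\ref{theorem3} (both sides then scale by $\beta^{\,d-1+2\alpha}$, which forces the exponent of $|P|$ in $F_\alpha$ to be $d-2+2\alpha$). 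Finally, on $\mathbb{S}^{d-2}$ one has $\sum_i\omega_i^{2}=1$, so Cauchy--Schwarz gives $P(\omega)^{2}=\bigl(\sum_i(\widetilde{\kappa_i}\omega_i)\,\omega_i\bigr)^{2}\le\sum_i\widetilde{\kappa_i}^{2}\omega_i^{2}$, and therefore pointwise on $\mathbb{S}^{d-2}$
\[
\sum_i\widetilde{\kappa_i}^{2}\omega_i^{2}\ \le\ 4\sum_i\widetilde{\kappa_i}^{2}\omega_i^{2}-3P(\omega)^{2}\ \le\ 4\sum_i\widetilde{\kappa_i}^{2}\omega_i^{2}.
\]
Taking square roots, multiplying by $|P(\omega)|^{\,d-2+2\alpha}\ge 0$ and integrating over $\mathbb{S}^{d-2}$ then gives $F_{\alpha}(\{\kappa_i(p)\})\le\int_{\{H(p,\cdot)=1\}}|\xi|_{g(p)}^{1+2\alpha}\,d\sigma_{p,H}\le 2F_{\alpha}(\{\kappa_i(p)\})$, which is \eqref{eq:d3}.

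The Cauchy--Schwarz squeeze is routine and is what manufactures the universal constant $2$; the step I expect to require the most care is the surface-measure computation. One must check that Assumption (A) really renders $R=|P|$ smooth, so that $\{H(p,\cdot)=1\}$ is a bona fide smooth star-shaped hypersurface with no singular locus where $P$ vanishes; that the Jacobian factor $R^{\,d-3}\sqrt{R^{2}+|\nabla_{\mathbb{S}}R|^{2}}$ and the normalisation of $\sigma_{p,H}$ agree with the conventions used elsewhere in the paper (consistency with the homogeneity bookkeeping \eqref{disintegration} and with Lemma~\ref{theorem3}); and that the $(d-2)$-dimensional hypersurface $\{H(p,\cdot)=1\}$ is correctly viewed inside the $(d-1)$-dimensional fibre $T^{*}_p(\partial D)$, with the passage to geodesic normal coordinates neither distorting $|\xi|_{g(p)}$ nor altering the spectrum of $\mathcal{A}(p)$ that defines the principal curvatures.
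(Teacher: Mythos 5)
Your argument is correct and follows essentially the same route as the paper's proof: diagonalise $\mathcal{A}(p)$ in geodesic normal coordinates, parametrise the star-shaped level set $\{H(p,\cdot)=1\}\subset T^*_p(\partial D)\cong\mathbb{R}^{d-1}$ over $\mathbb{S}^{d-2}$ via $\xi=R(\omega)\omega$ with $R=|P|$ and $P(\omega)=\sum_i\widetilde{\kappa_i}\omega_i^2$, compute the weighted surface measure explicitly, and squeeze the resulting radical by a single Cauchy--Schwarz. You arrive at the same identity the paper intends, but where the paper's proof computes the surface Jacobian by treating $\omega\mapsto r(\omega)\omega$ as an ambient diffeomorphism (Sherman--Morrison plus the cofactor factor $|L^{-T}\omega|$), you read the measure off the first fundamental form $R^2h+dR\otimes dR$ of a radial graph; both are valid, and yours is arguably more transparent.

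There is, however, one substantive discrepancy, and here you are right and the printed lemma is off by one. Carrying out either computation carefully (for the Jacobian route: $\det L=3r^{d-1}$, not $3r^d$; $|L^{-T}\omega|=\sqrt{4Q-3P^2}/(3P^2)$ with $Q:=\sum_i\widetilde{\kappa_i}^2\omega_i^2$) gives
\[
|\xi|_{g(p)}^{1+2\alpha}\,d\sigma_{p,H}=|P(\omega)|^{\,d-2+2\alpha}\sqrt{4Q-3P^2}\;d\omega,
\]
whereas the paper records the exponent $d-1+2\alpha$ and an integrand $\sqrt{4Q^2-3P}$ which is dimensionally inconsistent. Your scaling sanity-check against Lemma~\ref{theorem3} is decisive: under $\mathcal{A}(p)=\beta\mathcal{A}(q)$ the middle term of \eqref{eq:d3} scales as $\beta^{\,d-1+2\alpha}$, while $F_\alpha$ as printed scales as $\beta^{\,d+2\alpha}$, so \eqref{eq:d3} cannot hold for all $\beta$; with exponent $d-2+2\alpha$ both sides scale identically and the Cauchy--Schwarz squeeze $Q\le 4Q-3P^2\le 4Q$ yields precisely the factor-of-two bound. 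So your proof is sound, and in addition it locates and corrects an off-by-one error in the statement of the lemma.
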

\begin{proof}
We first simplify the expression of $H(p,\xi) = 0$ by fixing a point $p$ and choosing a geodesic normal coordinate with the principal curvatures along the directions $\xi_i$.  In this case
$$H(p,\xi) = \left(   \sum_{i=1}^{d-1} \widetilde{\kappa_i (p) } \, \xi_i^2 \right)^2 \bigg/  \left(   \sum_{i=1}^{d-1} \xi_i^2 \right)^3 \,. $$
Let us parametrize the surface $\{ H(p,\cdot) = 1 \}$ by $\omega \in \mathbb{S}^{d-2}$ with $\xi (\omega) := r(\omega) \, \omega$, which is legitimate due to the $-2$ homogeneity of $H$ with respect to $\xi$.  With this, we readily see that on $H = 1$ one has 
$$
r(\omega) =  \sum_{i=1}^{d-1} \widetilde{\kappa_i (p) } \, \omega_i^2  \,.
$$
Hence by virtue of the Sherman-Morrison formula one has
\begin{equation}\label{eq:d4}
\begin{split}
L_{ij} :=& \frac{ \partial \xi }{\partial \omega}_{ij} = r(\omega) \delta_{ij} + 2 \widetilde{\kappa_{j}(p)} \omega_j \omega_i  \, ,\\
 ( L^{-T} )_{ij} =& \frac{1}{r(\omega)} \delta_{ij} -\frac{2}{3 r(\omega)} \widetilde{\kappa_{i}(p)} \omega_i \omega_j  \,,\, \det(L) = 3 \left(r(\omega)\right)^{d} \,,
\end{split}
\end{equation}
with which, via a change of variables, one can further derive that
\[
| \xi |_{g(p)}^{1+2\alpha}  d \sigma_{p,H} = | r (\omega)|^{d -1+2\alpha } \sqrt{4 \left( \sum_{i=1}^{d-1} \widetilde{\kappa_i (p) }^2 \, \omega_i^2 \right)^2 - 3 \left( \sum_{i=1}^{d-1} \widetilde{\kappa_i (p) } \, \omega_i^2 \right) \left( \sum_{i=1}^{d-1} \omega_i^2 \right)  }  d \omega \,.
\]
Finally by the Cauchy-Schwarz inequality, we therefore have
\[
\begin{split}
 & \left| \sum_{i=1}^{d-1} \widetilde{\kappa_i(p) } \omega_i^2 \right|^{d-1 + 2 \alpha} \sqrt{ \sum_{i=1}^{d-1} \widetilde{\kappa_i (p)}^2 \omega_i^2 } d \omega\\
  \leq & | \xi |_{g(p)}^{1+2\alpha}  d \sigma_{p,H}  \leq 2    \left| \sum_{i=1}^{d-1} \widetilde{\kappa_i (p)} \omega_i^2 \right|^{d-1 + 2 \alpha} \sqrt{ \sum_{i=1}^{d-1} \widetilde{\kappa_i  (p)}^2 \omega_i^2 }  d \omega  \,,
  \end{split}
\]
which readily completes the proof.
\end{proof}

Lemma~\ref{theorem_volume} supplies us with a strong tool to obtain the comparison between the ratio of the magnitude of the eigenfunctions via the magnitudes of the principal curvatures at the respective points.  For instance, if it happens that $\min_i |\widetilde{\kappa_i (p)}| \gg \max_i |\widetilde{\kappa_i (q)}| $, then it is clear that the weighted volume of  $\{H(p,\cdot) =1\}$ is much bigger than that at $q$.

\vskip 3mm

\begin{Remark}
As we will explore in Appendix \ref{Appendix_A}, when $d = 3$, $\overline{ \{H = 1\} } \bigcap \left(  \partial D  \times \{0\} \right)  = \emptyset$ if and only if $A(p) > c_0 I$ for all $x \in \partial D$.    In this strictly convex case with $d=3$, we therefore have 
\begin{equation}\label{eq:d5}
\min_{i=1,2} \kappa_i^{3+2\alpha} (p)  \leq F_{\alpha} \left( \{ \kappa_i(p) \}_{i=1}^{1-2} \right) \leq \max_{i=1,2} \kappa_i^{3+2\alpha} (p),
\end{equation}
and hence
\begin{equation}\label{eq:d6}
\min_{i=1,2} \kappa_i^{3+2\alpha} (p)  \leq \int_{ \{ H(p,\cdot) = 1 \} } | \xi |_{g(p)}^{1+2\alpha} \, d \sigma_{p,H} \leq 2 \max_{i=1,2} \kappa_i^{3+2\alpha} (p) \,.
\end{equation}
This fully captures the geometric behavior that the NP eigenfunctions localize at the point in a high curvature when $d = 3$ and when the flow on $ \{H = 1\} $ is non-singular.
Further remarks and brief discussions upon certain geometric properties of the Hamiltonian flow is postponed to Appendix \ref{Appendix_A}.
\end{Remark}

Finally, we discuss the implication of the localization/concentration result of the NP eigenfunctions to the surface plasmon resonances. According to our discussion in Section~\ref{sect:1.2}, an SPR field $u$ is the superposition of the plasmon resonant modes of the form 
\begin{equation}\label{eq:sss1}
u=\sum_{i}\alpha_i \mathcal{S}_{\partial D}[\phi_i],
\end{equation}
 where $\alpha_i\in\mathbb{C}$ represents a Fourier coefficient and each $\phi_i$ is an NP eigenfunction, namely $\mathcal{K}_{\partial D}[\phi_i]=\lambda_i\phi_i$ with $\lambda_i\in\mathbb{R}$ being an NP eigenvalue. As it is known in the literature, a main feature of the SPR field is that it exhibits a highly oscillatory behavior (due to the resonance) and the resonant oscillation is mainly confined in a vicinity of the boundary $\partial D$. For a boundary point $p\in\partial D$, one handily computes from \eqref{jump_condition} that
 \begin{equation}\label{eq:sss2}
 \frac{\partial}{\partial\nu}\left(\mathcal{S}_{\partial D}[\phi_i] \right)^{\pm}(p)=(\pm \frac 1 2 I +\mathcal{K}^*_{\partial D})[\phi_i](p)=(\pm\frac 1 2+\lambda_i)\phi_i(p). 
 \end{equation}
 Generically, \eqref{eq:sss2} indicates that if $|\phi_i(p)|$ is large, then $|\nabla \mathcal{S}_{\partial D}[\phi_i]|$ is also large in a neighbourhood of $p$.  
 Hence, by the localization/concentration of the NP eigenfunction $\phi_i$ established above for a high-curvature point $p\in\partial D$, it is unobjectionable to see from \eqref{eq:sss2} that the resonant energy of the plasmon resonant mode $\mathcal{S}_{\partial D}[\phi_i]$ also localizes/concentrates near the point $p$, in the sense that the resonant oscillation near $p$ is more significant than that near the other boundary point with a relatively smaller magnitude of curvature. According to our earlier analysis following Theorem~\ref{theorem1}, this is particularly the case for the high-mode-number plasmon resonant mode, namely $\mathcal{S}_{\partial D}[\phi_i]$ with $i\in\mathbb{N}$ sufficiently large, which corresponds to that $\lambda_i$ is close to the accumulating point $0$. Consequently, one can readily conclude similar localization/concentration results for the SPR field $u$ in \eqref{eq:sss1}.   
 
%
%
%

\section{Localization/concentration of plasmon resonances for quasi-static wave scattering}\label{sect:5}

In this section, we consider the scalar wave scattering governed by the Helmholtz system in the quasi-static regime and extend all of the electrostatic results to this quasi-static case. Let $\varepsilon_0, \mu_0, \varepsilon_1 , \mu_1$ be real constants and in particular, assume that $\varepsilon_0$ and $\mu_0$ are positive. Let $D$ be given as that in Section~\ref{sect:1}, and set
\[
\mu_{D}  = \mu_1 \chi(D) +  \mu_0 \chi(\mathbb{R}^d \backslash \overline{D}),\quad 
\varepsilon_{D}  = \varepsilon_1 \chi(D) +  \varepsilon_0 \chi(\mathbb{R}^d \backslash \overline{D}). 
\]
$(\varepsilon_1, \mu_1)$ and $(\varepsilon_0, \mu_0)$, respectively, signify the dielectric parameters of the plasmonic particle $D$ and the background space $\mathbb{R}^d\backslash\overline{D}$. Let $\omega\in\mathbb{R}_+$ denote a frequency of the wave. We further set $k_0  := \omega \sqrt{ \varepsilon_0 \mu_0} $ and $k_1  := \omega \sqrt{ \varepsilon_1 \mu_1} $, where we would take the branch of the square root with non-negative imaginary part (in the case that $\varepsilon_1\mu_1$ is negative). Let $u_0$ be an entire solution to $(\Delta+k_0^2) u_0=0$ in $\mathbb{R}^d$. Consider the following Helmholtz scattering problem for $u\in H_{loc}^1(\mathbb{R}^d)$ satisfying
\beqn
    \begin{cases}
        \nabla \cdot (\frac{1}{\mu_{D} } \nabla u) + \omega^2 \varepsilon_{D} u = 0 &\text{ in }\; \mathbb{R}^d, \\[1.5mm]
         (\frac{\partial}{\partial |x| } - \mathrm{i} k_0 ) (u - u_0) = o(|x|^{ - \frac{d-1}{2}}) &\text{ as }\; |x| \rightarrow \infty,
    \end{cases}
    \label{transmissionk}
\eqn
where the last limit is known as the Sommerfeld radiation condition that characterises the outgoing nature of the scattered field $u-u_0$. The Helmholtz system \eqref{transmissionk} can be used to describe the transverse electromagnetic scattering in two dimensions, and the acoustic wave scattering in three dimensions. Nevertheless, we unify the study for any dimension $d\geq 2$. Moreover, we are mainly concerned with the quasi-static case, namely $\omega \ll 1$, or equivalently $k_0\ll 1$.

Similar to the electrostatic case, we next introduce the integral formulation of \eqref{transmissionk}. To that end, we first introduce the associated layer potential operators as follows. Let 
\beqn
    \Gamma_k (x-y) := C_{d} ( k|x-y| )^{- \frac{d-2}{2}} H^{(1)}_{\frac{d-2}{2}}(k|x-y|) ,
    \label{fundamental2}
\eqn
be the outgoing fundamental solution to the differential operator $\Delta+k^2$, where $C_d$ is some dimensional constant and  $H^{(1)}_{\frac{d-2}{2}}$ is the Hankel function of the first kind and order $(d-2)/2$.

We introduce the following single and double-layer potentials associated with a given wavenumber $k\in\mathbb{R}_+$, 
\beqn
    \mathcal{S}^k_{\partial D} [\phi] (x) &:=&  \int_{\partial D} \Gamma_k(x-y) \phi(y) d \sigma(y) ,\ \ x\in\mathbb{R}^d,\\
  \mathcal{D}^k_{\partial D} [\phi] (x) &:=& \int_{\partial D} \frac{\partial }{\partial \nu_y } \Gamma_k(x-y) \phi(y) d \sigma(y) ,\ \ x\in\mathbb{R}^d \setminus \partial D.
\eqn
The single-layer potential $\mathcal{S}^k_{\partial D}$ satisfies the following jump relation on $\partial D$ (cf. \cite{book, kellog}):
\beqn
    \f{\p}{\p \nu} \left(  \mathcal{S}^k_{\partial D} [\phi] \right)^{\pm} = (\pm \f{1}{2} Id + {\mathcal{K}^k_{\partial D}}^* )[\phi]\,,
    \label{jump_condition2}
\eqn
where the superscripts $\pm$ indicate the traces from outside and inside of $D$, respectively, and
${\mathcal{K}^k_{\partial D}}^*: L^2(\partial D) \rightarrow L^2(\partial D)$ is the Neumann-Poincar\'e (NP) operator of wavenumber $k$ defined by
\beqn
    {\mathcal{K}^k_{\partial D}}^* [\phi] (x) := \int_{\partial D} \partial_{\nu_x} \Gamma_k(x-y) \phi(y) d \sigma(y) \, .
    \label{operatorK2}
\eqn
With this, $u\in H_{loc}^1(\mathbb{R}^d)$ in \eqref{transmissionk} can be given by
\begin{equation}\label{eq:ss}
u = 
\begin{cases}
u_0 + \mathcal{S}^{k_0}_{\partial D} [\psi] & \text{ on } \mathbb{R}^d \backslash \overline{D}, \\
\mathcal{S}^{k_1}_{\partial D} [\phi]  & \text{ on }D,
\end{cases}
\end{equation}
where $(\phi,\psi) \in L^2 (\partial D) \times L^2 (\partial D)$ is formally given by (provided that $k_1^2$ is not a Dirichlet eigenvalue of the Laplacian in $D$)
\beqnx
\begin{cases}
\mathcal{S}^{k_1}_{\partial D} [\phi]  - \mathcal{S}^{k_0}_{\partial D} [\psi]  = u_0,\medskip \\
\frac{1}{\mu_1}( - \f{1}{2} Id + {\mathcal{K}^{k_1}_{\partial D}}^* )[\phi] - \frac{1}{\mu_0}( \f{1}{2} Id + {\mathcal{K}^{k0}_{\partial D}}^* ) [\psi] =  \frac{1}{\mu_0}   \f{\p u_0}{\p \nu}   \,,
\end{cases}
\eqnx
or that
\begin{equation}\label{eq:ss1}
\begin{split}
 &\left\{   \f{1}{2}  \left( \frac{1}{\mu_0} Id + \frac{1}{\mu_1} \, \left( \mathcal{S}^{k_1}_{\partial D} \right)^{-1}  \mathcal{S}^{k_0}_{\partial D} \right)
+
\frac{1}{\mu_0} {\mathcal{K}^{k0}_{\partial D}}^* 
- \frac{1}{\mu_1} {\mathcal{K}^{k_1}_{\partial D}}^*   \left( \mathcal{S}^{k_1}_{\partial D} \right)^{-1}  \mathcal{S}^{k_0}_{\partial D}
 \right\} [\psi] \\
=&    \frac{1}{\mu_1}( - \f{1}{2} Id + {\mathcal{K}^{k_1}_{\partial D}}^* ) \circ \left(\mathcal{S}^{k_1}_{\partial D} \right)^{-1} \left[ u_0 \right]  -  \frac{1}{\mu_0}  \f{\p u_0}{\p \nu}   \\
=&   \left( \frac{1}{\mu_1} -  \frac{1}{\mu_0}  \right) \f{\p u_0}{\p \nu}  .
\end{split}
\end{equation}
Similar to our treatment in \cite{ACL} and using \eqref{eq:ss1}, we can now formally write 
\begin{equation}\label{eq:rep1}
\begin{split}
u  - u_0 = & \left( \frac{1}{\mu_1} -  \frac{1}{\mu_0}  \right) \mathcal{S}^{k_0}_{\partial D} \circ \bigg\{   \f{1}{2}  \left( \frac{1}{\mu_0} Id + \frac{1}{\mu_1} \, \left( \mathcal{S}^{k_1}_{\partial D} \right)^{-1}  \mathcal{S}^{k_0}_{\partial D} \right)\\
&+
\frac{1}{\mu_0} {\mathcal{K}^{k0}_{\partial D}}^* 
- \frac{1}{\mu_1} {\mathcal{K}^{k_1}_{\partial D}}^*   \left( \mathcal{S}^{k_1}_{\partial D} \right)^{-1}  \mathcal{S}^{k_0}_{\partial D}
 \bigg\}^{-1}  \left[ \f{\p u_0}{\p \nu}   \right ],
 \end{split}
\end{equation}
when the inverses in the equation do exist. As in \cite{ACL}, we notice that
\beqn
    \mathcal{S}^k_{\partial D} =  \mathcal{S}_{\partial D}  + \omega^2 \, \mathcal{S}^k_{\partial D,-3} \,, \quad 
    {\mathcal{K}^k_{\partial D}}^*  =  {\mathcal{K}_{\partial D}}^* +  \omega^2  \,  \mathcal{K}^k_{\partial D,-3}  \,
\ \text{ and }\ 
   \Lambda_{k_0}  = \Lambda_{0}  +  \omega^2  \,  \Lambda_{k_0,-1}  \,, 
\label{perturbation}
\eqn
where $ \mathcal{K}^k_{\partial D,-3} , \mathcal{S}^k_{\partial D,-3} , \Lambda_{k_0,-1} $ are uniformly bounded w.r.t. $\omega$ and are of order  $-3$, $-3$ and $-1$, respectively.  
With this, one quickly observes that the following lemma holds (cf. \cite{ACL}):
\begin{Lemma} It holds that
\begin{equation}\label{eq:rep1b}
\begin{split}
u  - u_0 =  \mathcal{S}^{k_0}_{\partial D} \circ \left(    \left\{   \lambda(\mu_0^{-1}, \mu_1^{-1}) Id
- {\mathcal{K}_{\partial D}}^*   
 \right\}^{-1}   \circ \Lambda_{k_0} (u_0) +  \omega^2 \, R_{\mu_0,\mu_1,\eps_0,\eps_1,\omega,\partial D, -1}   ( u_0 )  \right),
 \end{split}
\end{equation}
where $R_{\mu_0,\mu_1,\eps_0,\eps_1,\omega,\partial D, -1} $ is uniformly bounded with respect to $\omega \ll  1$ and is of order $-1$.
\end{Lemma}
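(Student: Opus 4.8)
The plan is to derive \eqref{eq:rep1b} from the formal representation \eqref{eq:rep1} by a resolvent (Neumann-series) perturbation of the boundary operator that there sits between $\mathcal{S}^{k_0}_{\partial D}$ and $\partial u_0/\partial\nu$, using the quasi-static expansions \eqref{perturbation}, and then by collecting all remainders into one operator whose order is tracked through the symbolic calculus. First I would rewrite the source: since $u_0$ solves $(\Delta+k_0^2)u_0=0$ in $\mathbb{R}^d$, hence in $D$, its Cauchy data on $\partial D$ obey $\partial u_0/\partial\nu|_{\partial D}=\Lambda_{k_0}(u_0|_{\partial D})$ through the interior Dirichlet-to-Neumann map of $\Delta+k_0^2$, so the right-hand side of \eqref{eq:rep1} becomes $c\,\mathcal{S}^{k_0}_{\partial D}\circ T_\omega^{-1}\circ\Lambda_{k_0}(u_0)$, with $c:=\tfrac1{\mu_1}-\tfrac1{\mu_0}$ and $T_\omega$ the operator in the curly braces of \eqref{eq:rep1}. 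For $\omega\ll1$ all the inverses appearing in \eqref{eq:rep1} exist: $k_1^2$ is then not a Dirichlet eigenvalue of the Laplacian in $D$, and $\mathcal{S}^{k_1}_{\partial D}=\mathcal{S}_{\partial D}+O(\omega^2)$ is a small perturbation of the invertible $\mathcal{S}_{\partial D}$ (classical for $d\ge3$, with the usual modification for $d=2$).

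Next I would expand $T_\omega$ in $\omega$. Writing $\mathcal{S}^{k_j}_{\partial D}=\mathcal{S}_{\partial D}\bigl(Id+\omega^2\mathcal{S}_{\partial D}^{-1}\mathcal{S}^{k_j}_{\partial D,-3}\bigr)$ from \eqref{perturbation} and summing a Neumann series gives $(\mathcal{S}^{k_1}_{\partial D})^{-1}\mathcal{S}^{k_0}_{\partial D}=Id+\omega^2 B_\omega$, where $B_\omega=\mathcal{S}_{\partial D}^{-1}\bigl(\mathcal{S}^{k_0}_{\partial D,-3}-\mathcal{S}^{k_1}_{\partial D,-3}\bigr)+O(\omega^2)$ is uniformly bounded for $\omega\ll1$ and, being an order-$(+1)$ operator composed with an order-$(-3)$ one, is of order $-2$. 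Inserting this together with ${\mathcal{K}^{k_j}_{\partial D}}^*={\mathcal{K}_{\partial D}}^*+\omega^2\mathcal{K}^{k_j}_{\partial D,-3}$ into $T_\omega$ and isolating the $\omega$-independent part (the electrostatic limit) yields
\[
T_\omega=c\,\bigl(\lambda(\mu_0^{-1},\mu_1^{-1})\,Id-{\mathcal{K}_{\partial D}}^*\bigr)+\omega^2 E_\omega ,
\]
where $c\neq0$ (reflecting $\mu_0\neq\mu_1$) and the argument of $\lambda$ are fixed by $\lambda(a,b)=\tfrac{a+b}{2(a-b)}$ in the normalization of \eqref{eq:eigen1}, and $E_\omega$ — assembled from $B_\omega$, $\mathcal{K}^{k_j}_{\partial D,-3}$ and their compositions with ${\mathcal{K}_{\partial D}}^*$, with dominant term $\tfrac1{2\mu_1}B_\omega$ — is uniformly bounded for $\omega\ll1$ and of order $-2$.

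Then I would invert and reassemble. Because $\lambda(\mu_0^{-1},\mu_1^{-1})$ is not an eigenvalue of ${\mathcal{K}_{\partial D}}^*$ — the very hypothesis that makes the inverses in \eqref{eq:rep1} meaningful — the operator $\lambda\,Id-{\mathcal{K}_{\partial D}}^*$ is elliptic of order $0$ with bounded inverse of order $0$; so there is $\omega_0>0$ such that for $0<\omega<\omega_0$ the term $\omega^2 E_\omega$ is small in operator norm and a Neumann series gives
\[
c\,T_\omega^{-1}=\bigl(\lambda(\mu_0^{-1},\mu_1^{-1})\,Id-{\mathcal{K}_{\partial D}}^*\bigr)^{-1}+\omega^2 R'_\omega ,
\]
with $R'_\omega=-c^{-1}\bigl(\lambda\,Id-{\mathcal{K}_{\partial D}}^*\bigr)^{-1}E_\omega\bigl(\lambda\,Id-{\mathcal{K}_{\partial D}}^*\bigr)^{-1}+O(\omega^2)$ uniformly bounded for $\omega<\omega_0$ and of order $-2$ (two order-$0$ resolvents times an order-$(-2)$ operator). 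Substituting this back, factoring $\mathcal{S}^{k_0}_{\partial D}$ out, and using that $\Lambda_{k_0}=\Lambda_0+\omega^2\Lambda_{k_0,-1}$ is of order $1$ and uniformly bounded for $\omega\ll1$ by \eqref{perturbation}, one lands on \eqref{eq:rep1b} with $R_{\mu_0,\mu_1,\eps_0,\eps_1,\omega,\partial D,-1}:=R'_\omega\,\Lambda_{k_0}$, which has order $(-2)+1=-1$ and is uniformly bounded for $\omega<\omega_0$. (If instead one starts from the exact equation \eqref{eq:ss1}, the passage from its second to its last line produces one further source error $\tfrac1{\mu_1}(\Lambda_{k_1}-\Lambda_{k_0})=O(\omega^2)$ of order $-1$, which composes with the order-$0$ operator $T_\omega^{-1}$ to stay of order $-1$ and is absorbed into $R$ likewise.)

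The principal difficulty is not a single estimate but the uniform bookkeeping: at every step the errors must be checked to be genuinely $O(\omega^2)$ \emph{as operators of the stated pseudodifferential orders}, using the uniform-in-$\omega$ bounds on $\mathcal{S}^{k_j}_{\partial D,-3}$, $\mathcal{K}^{k_j}_{\partial D,-3}$ and $\Lambda_{k_0,-1}$ from \eqref{perturbation} and the fact that composing with the order-$(+1)$ operators $\mathcal{S}_{\partial D}^{-1}$ and $\Lambda_{k_0}$ shifts the order by exactly $+1$. The only genuinely analytic ingredient is the uniform invertibility of $T_\omega$ for $\omega\ll1$, which reduces to the non-resonance of $\lambda(\mu_0^{-1},\mu_1^{-1})$ together with the uniform operator bound on $E_\omega$.
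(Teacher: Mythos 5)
Your proposal is correct and is precisely the perturbation-in-$\omega^2$ argument the paper has in mind: the paper states the lemma without an explicit proof, pointing only to the derivation \eqref{eq:ss1}, the quasi-static expansions \eqref{perturbation}, and the reference \cite{ACL}, and your route — expand $(\mathcal{S}^{k_1}_{\partial D})^{-1}\mathcal{S}^{k_0}_{\partial D}$ and the wavenumber-$k$ NP operators, isolate the electrostatic operator in $T_\omega$, invert by a Neumann series using non-resonance of $\lambda(\mu_0^{-1},\mu_1^{-1})$, and track pseudodifferential orders through the composition with $\Lambda_{k_0}$ — is exactly that argument made explicit. One minor discrepancy worth flagging: your bookkeeping gives the $\omega^2$-correction $E_\omega$ to $T_\omega$ (and thus the paper's $\mathcal{E}_{\cdots,-3}$ in \eqref{eq:ss3}) as order $-2$, dominated by the $\tfrac{1}{2\mu_1}B_\omega$ contribution from $(\mathcal{S}^{k_1}_{\partial D})^{-1}\mathcal{S}^{k_0}_{\partial D}$, whereas \eqref{eq:ss3} asserts order $-3$; this has no bearing on the lemma, since — as you note — the Dirichlet-to-Neumann source replacement $\Lambda_{k_1}\mapsto\Lambda_{k_0}$ already contributes an order-$(-1)$ term, which sets the order of $R$ either way.
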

\noindent

Similar to the static case discussed in Section~\ref{sect:1.2}, for given $\mu_0, \eps_0$ and $\omega\ll 1$, if the following operator equation 
\beqn
\bigg\{   \f{1}{2}  \left( \frac{1}{\mu_0} Id + \frac{1}{\mu_1} \, \left( \mathcal{S}^{k_1}_{\partial D} \right)^{-1}  \mathcal{S}^{k_0}_{\partial D} \right) +
\frac{1}{\mu_0} {\mathcal{K}^{k0}_{\partial D}}^* 
- \frac{1}{\mu_1} {\mathcal{K}^{k_1}_{\partial D}}^*   \left( \mathcal{S}^{k_1}_{\partial D} \right)^{-1}  \mathcal{S}^{k_0}_{\partial D}  \bigg\}  \phi  = 0
\label{resonance}
\eqn
has a non-trivial solution $\phi \in H^{-1/2} ( \partial D, d \sigma)$, then $(\eps_1, \mu_1)$ is said to be a pair of plasmonic eigenvalue and $\phi$ is called a perturbed NP eigenfunction. In this case, the plasmon resonant field in $\mathbb{R}^d\backslash\overline{D}$ is given by $\mathcal{S}_{\partial D}^{k_0}[\phi]$. Next, we consider the geometric properties of the perturbed NP eigenfunctions as well as the associated layer-potentials described above. We quickly realize from \eqref{perturbation} that \eqref{resonance} reads:
\begin{equation}\label{eq:ss3}
\bigg\{  \lambda(\mu_0^{-1}, \mu_1^{-1}) Id - {\mathcal{K}^*_{\partial D}}+  \omega^2 \mathcal{E}_{\mu_0,\mu_1,\eps_0,\eps_1,\omega,\partial D, -3}  \bigg\}  \phi  = 0,
\end{equation}
where $ \mathcal{E}_{\mu_0,\mu_1,\eps_0,\eps_1,\omega,\partial D, -3} $ is uniformly bounded with respect to $\omega \ll  1$ and is of order $-3$. Furthermore, since $ \mathcal{K}_{\partial D}^* +  \omega^2  \mathcal{E}_{\mu_0,\mu_1,\eps_0,\eps_1,\omega,\partial D, -3}$ is compact but not-self adjoint, we have a finite dimensional generalized eigenspace whenever the eigenvalue is non zero \cite{spectral}, (we are unsure of what happens when $\lambda = 0$, i.e. if the kernal of the operator is finite dimensional and if there is a quasi-nilpotent subspace). 
We may therefore consider the following generalized plasmon resonance: find $\phi \in H^{-1/2} ( \partial D, d \sigma)$ such that for some $m \in \mathbb{N} $,
\beqn
\bigg\{   \f{1}{2}  \left( \frac{1}{\mu_0} Id + \frac{1}{\mu_1} \, \left( \mathcal{S}^{k_1}_{\partial D} \right)^{-1}  \mathcal{S}^{k_0}_{\partial D} \right) +
\frac{1}{\mu_0} {\mathcal{K}^{k0}_{\partial D}}^* 
- \frac{1}{\mu_1} {\mathcal{K}^{k_1}_{\partial D}}^*   \left( \mathcal{S}^{k_1}_{\partial D} \right)^{-1}  \mathcal{S}^{k_0}_{\partial D}  \bigg\}^m  \phi  = 0.
\label{generalized_resonance}
\eqn
 It is noted from our earlier discussion that if $( \mu_0,\mu_1,\eps_0,\eps_1,\omega )$ fulfils that \eqref{generalized_resonance} has a solution, then $m$ is finite.

The following lemma characterizes the plasmon resonance when $ \omega \ll  1$.

\begin{Lemma}
\label{close}
Under Assumption (A) and $ \omega \ll  1$, a solution $(  ( \mu_0,\mu_1,\eps_0,\eps_1,\omega ) , m, \phi_{\mu_0,\mu_1,\eps_0,\eps_1,\omega,m} ) $ satisfying the generalized plasmon resonance equation \eqref{generalized_resonance} with a unit $L^2$-norm possesses the following property for all $s \in \mathbb{R}$:
\beqnx
\begin{cases}
\| |D|^{s} \phi_{\mu_0,\mu_1,\eps_0,\eps_1,\omega, m}  - |D|^{s} \phi_i \|_{\mathcal{C}^{0}(\partial D)} &= \mathcal{O}_{i,s}(\omega^2),\medskip\\
 \lambda(\mu_0^{-1}, \mu_1^{-1}) - \lambda_i &= \mathcal{O}_{i}(\omega^2),\\
\end{cases}
\eqnx
for some eigenpair $( \lambda_i , \phi_i )$ of the Neumann-Poincar\'e operator $ {\mathcal{K}^*_{\partial D}}$ with zero wavenumber, and $m \leq m_i$, where $\|\phi\|_{L^2(D)}=1$, and $m$ and $m_i$ signify the algebraic multiplicities of $\lambda$ and $\lambda_i$, respectively. Here the constant in $\mathcal{O}_{i,s}$ depends on both $i$ and $s$, and that in $\mathcal{O}_i$ depends only on $i$.
\end{Lemma}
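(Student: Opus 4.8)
The plan is to treat the operator appearing in the generalized resonance equation \eqref{generalized_resonance} as an $\omega^2$-perturbation of the zero-wavenumber operator $\lambda(\mu_0^{-1},\mu_1^{-1})Id - \mathcal{K}^*_{\partial D}$ and to apply analytic (Kato-type) perturbation theory for the discrete spectrum of compact operators. The first step is to make the reduction \eqref{eq:ss3} precise: rewrite \eqref{generalized_resonance} as $\big(\lambda(\mu_0^{-1},\mu_1^{-1})Id - \mathcal{K}^*_{\partial D} + \omega^2 \mathcal{E}_{-3}(\omega)\big)^m\phi = 0$, where $\mathcal{E}_{-3}(\omega)$ is a family of order $-3$ operators uniformly bounded for $\omega\ll 1$, as recorded just after \eqref{eq:ss3}. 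Since $\mathcal{K}^*_{\partial D}$ is compact with eigenvalues accumulating only at $0$, the nonzero spectrum is discrete; for $\omega$ small, the perturbed operator $\mathcal{K}^*_{\partial D} - \omega^2\mathcal{E}_{-3}(\omega)$ stays in a small operator-norm neighborhood of $\mathcal{K}^*_{\partial D}$, so its spectrum near any fixed nonzero eigenvalue $\lambda_i$ lies inside a small circle $\Gamma_i$ enclosing only $\lambda_i$, and the Riesz spectral projection
\[
P_i(\omega) := \frac{1}{2\pi\mathrm{i}}\oint_{\Gamma_i}\big(z - \mathcal{K}^*_{\partial D} + \omega^2\mathcal{E}_{-3}(\omega)\big)^{-1}\,dz
\]
is well defined, depends continuously (indeed analytically if $\mathcal{E}_{-3}$ is analytic in $\omega^2$) on $\omega$, and satisfies $\|P_i(\omega) - P_i(0)\| = \mathcal{O}_i(\omega^2)$ by the second resolvent identity applied under the contour integral. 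In particular $\mathrm{rank}\,P_i(\omega) = \mathrm{rank}\,P_i(0) = m_i$, the algebraic multiplicity of $\lambda_i$, which forces $m\le m_i$ for any generalized eigenvalue that the perturbed operator can have in $\Gamma_i$; and it pins the perturbed eigenvalue $\lambda(\mu_0^{-1},\mu_1^{-1})$ to be within $\mathcal{O}_i(\omega^2)$ of $\lambda_i$, giving the second estimate in the statement.

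The second step is to extract the eigenfunction estimate in the $\mathcal{C}^0$-norm. From $\|P_i(\omega) - P_i(0)\| = \mathcal{O}_i(\omega^2)$ (as operators on $L^2(\partial D,d\sigma)$, or equivalently on $L^2_{S_{\partial D}}$ using the norm equivalence recorded after \eqref{eq:s3}), the generalized eigenvector $\phi_{\mu_0,\mu_1,\eps_0,\eps_1,\omega,m}$ lies in the range of $P_i(\omega)$; normalizing it to unit $L^2(D)$-norm and projecting, $\phi_i := P_i(0)\phi_{\mu_0,\mu_1,\eps_0,\eps_1,\omega,m}/\|\cdot\|$ is a unit generalized eigenvector of $\mathcal{K}^*_{\partial D}$ with $\|\phi_{\mu_0,\mu_1,\eps_0,\eps_1,\omega,m} - \phi_i\|_{L^2} = \mathcal{O}_i(\omega^2)$. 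To upgrade from $L^2$ to $\mathcal{C}^0$ — and to handle the factor $|D|^s$ for arbitrary $s\in\mathbb{R}$ — I would use elliptic regularity of the eigenvalue problem: under Assumption (A), Theorem (the symbol computation) gives that $\mathcal{K}^*_{\partial D}$ is a nonnegative-definite (after symmetrization) elliptic $\Psi$DO of order $-1$ whose principal symbol never vanishes on $\{\lambda_i \ne 0\}$, so the resolvent $(z-\mathcal{K}^*_{\partial D})^{-1}$ is continuous from $H^t$ to $H^{t}$ on a fixed compact set of $z$, while the eigenvalue equation bootstraps $\phi$ into $\mathcal{C}^\infty(\partial D)$ (this is precisely the argument given in the excerpt just after Assumption (A)). Applying the same bootstrap to the perturbed operator — whose correction $\omega^2\mathcal{E}_{-3}(\omega)$ is smoothing of order $-3$ — shows the difference $\phi_{\mu_0,\mu_1,\eps_0,\eps_1,\omega,m} - \phi_i$ is controlled in every $H^t$-norm by its $L^2$-norm times a constant depending on $i$ and $t$, uniformly in $\omega\ll 1$; composing with $|D|^s = \mathrm{Op}_{|\xi|_{g}^{-1}}^{\,s}$ (a fixed $\Psi$DO of order $-s$) and invoking Sobolev embedding $H^{t}(\partial D)\hookrightarrow \mathcal{C}^0(\partial D)$ for $t$ large yields $\||D|^s\phi_{\mu_0,\mu_1,\eps_0,\eps_1,\omega,m} - |D|^s\phi_i\|_{\mathcal{C}^0(\partial D)} = \mathcal{O}_{i,s}(\omega^2)$.

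The third, and I expect main, obstacle is the non-self-adjointness of the perturbed operator $\mathcal{K}^*_{\partial D} - \omega^2\mathcal{E}_{-3}(\omega)$, which is why the statement is phrased in terms of generalized eigenfunctions and algebraic multiplicities. The clean resolvent/projection estimates above do hold for any compact operator, but one must be careful that (i) the finite-dimensional space $\mathrm{Ran}\,P_i(\omega)$ may genuinely contain a nilpotent part, so the "eigenvector" must be interpreted in the generalized (root-vector) sense and the bound $m\le m_i$ is the right conclusion rather than equality of multiplicities; and (ii) the unperturbed operator $\mathcal{K}^*_{\partial D}$ itself is only self-adjoint after symmetrization by $\mathcal{S}_{\partial D}^{1/2}$, so to keep the principal-symbol/ellipticity bootstrap clean I would carry out the regularity step for the conjugated operator $(\mathcal{S}_{\partial D})^{1/2}\mathcal{K}^*_{\partial D}(\mathcal{S}_{\partial D})^{-1/2}$ as in \eqref{eq:s4}, then conjugate back using that $(\mathcal{S}_{\partial D})^{\pm 1/2}$ are elliptic $\Psi$DOs of order $\mp 1/2$ and the norm equivalence between $\|\cdot\|_{S_{\partial D}}$ and $\|\cdot\|_{H^{-1/2}}$. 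A secondary technical point is making "$\mathcal{O}_{i,s}(\omega^2)$" uniform: one must check that all the constants entering the resolvent bound on $\Gamma_i$, the number of loops needed in the Neumann series for $(z-\mathcal{K}^*_{\partial D}+\omega^2\mathcal{E}_{-3})^{-1}$, and the Sobolev/elliptic constants depend only on $i$, $s$ and on fixed data ($\partial D\in\mathcal{C}^{2,\alpha}$, Assumption (A), the dimensional bound on $\mathcal{E}_{-3}$), but not on the particular $(\mu_0,\mu_1,\eps_0,\eps_1,\omega)$ beyond $\omega\ll 1$; this follows because once $\lambda(\mu_0^{-1},\mu_1^{-1})$ is forced into $\Gamma_i$, the contour and all estimates are fixed.
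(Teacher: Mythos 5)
Your proposal is correct in substance but takes a genuinely different route to the spectral part of the argument. The paper obtains the $H^{-1/2}$ closeness of the eigenpairs by invoking Osborn's theorem for a collectively compact family $\{\mathcal{K}^*_{\partial D}+\omega^2\mathcal{E}_{-3}(\omega)\}_{\omega\ge 0}$ (citing Osborn 1975 together with the equivalence of $\|\cdot\|_{H^{-1/2}}$ and $\|\cdot\|_{L^2_{S_{\partial D}}}$), whereas you use classical Kato-style perturbation theory: the Riesz projection $P_i(\omega)$ on a contour $\Gamma_i$ around $\lambda_i$, the second resolvent identity to get $\|P_i(\omega)-P_i(0)\|=\mathcal{O}_i(\omega^2)$, rank preservation for $m\le m_i$, and projection $P_i(0)\phi$ as the candidate eigenvector. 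Both approaches deliver the same $\mathcal{O}_i(\omega^2)$ estimates on eigenvalues and eigenvectors at the $L^2$ / $H^{-1/2}$ level; Osborn's theorem packages this in a ready-made form tuned to approximation of compact operators, while your Riesz-projection route is self-contained and perhaps more transparent about where the constants come from (a fixed contour, a fixed number of resolvent iterations). You also correctly note that since $\mathcal{K}^*_{\partial D}$ is similar to a self-adjoint operator, $\mathrm{Ran}\,P_i(0)$ is a genuine eigenspace (no nilpotent part), so $P_i(0)\phi$ really is an eigenvector rather than merely a root vector.

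Where your sketch is thinner than the paper's proof is the bootstrap from $L^2$ to $\mathcal{C}^0$ in the presence of a nontrivial Jordan block. You write ``applying the same bootstrap to the perturbed operator shows the difference is controlled in every $H^t$-norm,'' but this step is not automatic when $m>1$: the function $\phi_{\mu_0,\mu_1,\eps_0,\eps_1,\omega,m}$ is then only a generalized eigenvector, so the equation it satisfies involves the next element down the Jordan chain. The paper handles this explicitly by constructing the whole chain $\widetilde{\phi_{i,1}},\dots,\widetilde{\phi_{i,m}}$ for the perturbed operator (with rescaled off-diagonal coefficients $\varepsilon_{j,j-1}=\mathcal{O}_i(\omega^2)$), pairing each $\widetilde{\phi_{i,j}}$ with an unperturbed eigenfunction $\phi_{i,j}$ via a second application of Osborn's theorem, subtracting the two systems, and bootstrapping inductively from the bottom of the chain upward, gaining one order of regularity per step via $\mathcal{K}^*_{\partial D}+\omega^2\mathcal{E}: H^s\to H^{s+1}$. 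You flag the nilpotent part as an issue but leave it as a remark rather than a worked step; to make the $\mathcal{C}^0$ estimate rigorous along your route you would need essentially the same inductive chain argument. With that filled in, the two proofs converge.
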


\begin{proof}
Since the family
$
\{ \mathcal{K}_{\partial D}^* +  \omega^2  \mathcal{E}_{\mu_0,\mu_1,\eps_0,\eps_1,\omega,\partial D, -3} \}_{\omega \geq 0 }
$
is collectively compact, it readily follows from Osborn's Theorem \cite{osborn} and the equivalence of $\| \cdot \|_{H^{-1/2} (\partial D, d \sigma)} $ and $  \| \cdot \|_{L^2_{S_{\partial D}}(\partial D)}$ that a solution $(  ( \mu_0,\mu_1,\eps_0,\eps_1,\omega ) , m, \phi_{\mu_0,\mu_1,\eps_0,\eps_1,\omega,m} ) $ satisfying \eqref{generalized_resonance} also satisfies:
\beqnx
\begin{cases}
\| \phi_{\mu_0,\mu_1,\eps_0,\eps_1,\omega, m}  - \phi_i \|_{H^{-1/2} (\partial D, d \sigma)} &= \mathcal{O}_{i}(\omega^2),\medskip\\
 \lambda(\mu_0^{-1}, \mu_1^{-1}) - \lambda_i &= \mathcal{O}_{i}(\omega^2),\\
\end{cases}
\eqnx
for some eigenpair $( \lambda_i , \phi_i )$ of the Neumann-Poincar\'e operator $ {\mathcal{K}^*_{\partial D}}$.

It remains to obtain the $\| |D|^{s} ( \cdot )  \|_{\mathcal{C}^{0}(\partial D)}$ bounds instead of the $H^{-1/2} (\partial D, d \sigma)$ bounds. For this purpose, let us look into the generalized eigenspace $ E_{ \lambda(\mu_0^{-1}, \mu_1^{-1}) } $ of $ \mathcal{K}_{\partial D}^* +  \omega^2  \mathcal{E}_{\mu_0,\mu_1,\eps_0,\eps_1,\omega,\partial D, -3} $ and pick $\widetilde{\phi_{i,j}} \in E_{ \lambda(\mu_0^{-1}, \mu_1^{-1}) } $ with a unit $H^{-1/2}$-norm satisfying $\widetilde{\phi_{i,m}} =  \phi_{\mu_0,\mu_1,\eps_0,\eps_1,\omega, m} $. Then there exists $\{ \varepsilon_{j,j-1} \}_{j=2}^m $ with $| \varepsilon_{j,j-1}| = O_i (\omega^2) $ such that 
\beqnx
\begin{cases}
 \left( \mathcal{K}_{\partial D}^* +  \omega^2  \mathcal{E}_{\mu_0,\mu_1,\eps_0,\eps_1,\omega,\partial D, -3} - \lambda(\mu_0^{-1}, \mu_1^{-1})  \right)   \widetilde{\phi_{i,j}}   = \varepsilon_{j,j-1}  \widetilde{\phi_{i,j-1}} \text{ for } j = 2,...,m \,,\\
\left( \mathcal{K}_{\partial D}^* +  \omega^2  \mathcal{E}_{\mu_0,\mu_1,\eps_0,\eps_1,\omega,\partial D, -3} - \lambda(\mu_0^{-1}, \mu_1^{-1})  \right)   \widetilde{\phi_{i,1}}   = 0 \,,
\end{cases}
\eqnx
which can always be done by rescaling the basis giving the Jordan block representation with a scaling factor of $1/\varepsilon_{j,j-1} $.
Then Osborn's Theorem and the equivalence of norms yield
\beqnx
\| \widetilde{\phi_{i,j}}   - \phi_{i,j} \|_{H^{-1/2} (\partial D, d \sigma)} &= \mathcal{O}_{i}(\omega^2)
\eqnx
for some $\phi_{i,j} \in \mathcal{C}^{\infty} (\partial D) $ sitting in the eigenspace of $\mathcal{K}_{\partial D}^*$. 
Taking the difference between the system in the generalized eigenspace and the original eigenvalue equations:
\beqnx
\begin{cases}
 & \left( \mathcal{K}_{\partial D}^* +  \omega^2  \mathcal{E}_{\mu_0,\mu_1,\eps_0,\eps_1,\omega,\partial D, -3}  \right)   ( \phi_{i,j} - \widetilde{\phi_{i,j}}   )  - \omega^2  \mathcal{E}_{\mu_0,\mu_1,\eps_0,\eps_1,\omega,\partial D, -3} \phi_{i,j} \\ 
& = \lambda(\mu_0^{-1}, \mu_1^{-1})   ( \phi_{i,j} - \widetilde{\phi_{i,j}}   )  + ( \lambda_i  -\lambda(\mu_0^{-1}, \mu_1^{-1}) ) \phi_{i,j} - \varepsilon_{j,j-1}  \widetilde{\phi_{i,j-1}}   \text{ for } j = 2,...,m \,,\medskip\\
 & \left( \mathcal{K}_{\partial D}^* +  \omega^2  \mathcal{E}_{\mu_0,\mu_1,\eps_0,\eps_1,\omega,\partial D, -3}  \right)   ( \phi_{i,1} - \widetilde{\phi_{i,1}}   )  - \omega^2  \mathcal{E}_{\mu_0,\mu_1,\eps_0,\eps_1,\omega,\partial D, -3} \phi_{i,1} \\ 
& = \lambda(\mu_0^{-1}, \mu_1^{-1})   ( \phi_{i,1} - \widetilde{\phi_{i,1}}   )  + ( \lambda_i  -\lambda(\mu_0^{-1}, \mu_1^{-1}) ) \phi_{i,1} .
\end{cases}
\eqnx
Now, under Assumption (A), $ \mathcal{K}_{\partial D}^* +  \omega^2  \mathcal{E}_{\mu_0,\mu_1,\eps_0,\eps_1,\omega,\partial D, -3} : H^s (\partial D) \rightarrow H^{s+1} (\partial D) $, and therefore we have from the above system that 
\beqnx
\|   \widetilde{\phi_{i,j}}  -  \phi_{i,j}  \|_{H^{1/2} (\partial D, d \sigma) }  = \mathcal{O}_i( \omega^2),
\eqnx
for all $j = 1,...,m$ with a different constant.  One now gazes at the above system.  Together with a bootstrapping argument and the fact that Assumption (A) gives $\phi_{i,j} \in \mathcal{C}^{\infty} (\partial D)$, we arrive at, for all $l \in \mathbb{R} $,
\beqnx
\|   \widetilde{\phi_{i,j}}  -  \phi_{i,j}  \|_{H^l (\partial D, d \sigma) }  = \mathcal{O}_{i,l}( \omega^2) \,.
\eqnx
Our conclusion follows after applying the Sobelov embedding theorem to bound the $\| |D|^{s} (\cdot )  \|_{\mathcal{C}^{0}(\partial D)}$ semi-norm by the $H^{s+l} (\partial D, d\sigma)$ norm for large enough $l$ .

The proof is complete. 
\end{proof}

We aim to know whether the generalized plasmon resonance (cf. \eqref{generalized_resonance}) always exists when $ \omega \ll  1$.  The following lemma addresses this issue.

\begin{Lemma}
\label{existence}
Given any non-zero $\lambda_i \in \sigma( \mathcal{K}_{\partial D}^* )$, the spectrum of $ \mathcal{K}_{\partial D}^* $, for any $ (\tilde{\mu}_0, \tilde{\mu}_1) \in  D_i := \{ (\mu_0, \mu_1) \in \mathbb{C}^2\backslash \{(0,0)\} \,: \, \lambda(\tilde{\mu}_0^{-1}, \tilde{\mu}_1^{-1} ) = \lambda_i \,, \, \mu_0 - \mu_1 \neq 0 \,  \} $ (which is non-empty), there exists $ 0 < \omega_i \ll  1$ such that for all $\omega < \omega_i$, the set 
\beqnx
 &&\bigg\{ ( \mu_0,\mu_1,\eps_0,\eps_1 ) \in  \mathbb{C}^2 \backslash\{\mu_0 - \mu_1 = 0 \} \times ( \mathbb{C} \backslash \mathbb{R}^+ )^2 ; \\
&& \text{ there exists } m \in \mathbb{N}, \phi \in H^{-1/2} (\partial D, \sigma)  \text{ such that } (  ( \mu_0,\mu_1,\eps_0,\eps_1,\omega ) , \phi, m) \text { satisfies } \eqref{generalized_resonance} \bigg\}
\eqnx
forms a complex co-dimension $1$ surface in a neighborhood of $(\tilde{\mu}_0, \tilde{\mu}_1)$.
\end{Lemma}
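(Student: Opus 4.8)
The plan is to reduce the generalized resonance condition \eqref{generalized_resonance} to the vanishing of a single holomorphic function via a Riesz projection, and then to recognise the zero set of that function near $(\tilde\mu_0,\tilde\mu_1)$ as a Weierstrass hypersurface. I would begin with two elementary observations. Writing $\lambda(\mu_0^{-1},\mu_1^{-1})=\frac{\mu_0+\mu_1}{2(\mu_1-\mu_0)}$, the equation $\lambda(\mu_0^{-1},\mu_1^{-1})=\lambda_i$ is equivalent to $\mu_0/\mu_1=\frac{2\lambda_i-1}{2\lambda_i+1}$, a nonzero constant different from $1$ because $\lambda_i\in(-1/2,1/2)\setminus\{0\}$; thus $D_i$ is a punctured complex line, in particular non-empty. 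The same computation gives $\nabla_{(\mu_0,\mu_1)}\lambda(\mu_0^{-1},\mu_1^{-1})=\big(\mu_1,-\mu_0\big)/(\mu_1-\mu_0)^2$, which does not vanish on $\mathbb{C}^2\setminus\{(0,0)\}$, so $(\mu_0,\mu_1)\mapsto\lambda(\mu_0^{-1},\mu_1^{-1})$ is a holomorphic submersion onto $\mathbb{C}$ near $(\tilde\mu_0,\tilde\mu_1)$.

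By \eqref{eq:ss3}, for $\omega\ll1$ the generalized resonance operator is $T=\lambda(\mu_0^{-1},\mu_1^{-1})\,Id-\mathcal{B}$, where $\mathcal{B}=\mathcal{B}_{\mu_0,\mu_1,\eps_0,\eps_1,\omega}$ is compact on $H^{-1/2}(\partial D,d\sigma)$, satisfies $\mathcal{B}|_{\omega=0}=\mathcal{K}^*_{\partial D}$ and $\mathcal{B}=\mathcal{K}^*_{\partial D}+\mathcal{O}(\omega^2)$ uniformly on compact parameter sets, and depends holomorphically on $(\mu_0,\mu_1,\eps_0,\eps_1)$ through $k_0=\omega\sqrt{\eps_0\mu_0}$, $k_1=\omega\sqrt{\eps_1\mu_1}$ and the analyticity in $k$ of the fundamental solutions $\Gamma_k$ (cf.\ \eqref{perturbation} and \cite{ACL,book}). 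Since $\lambda_i\ne0$ and $\lambda(\mu_0^{-1},\mu_1^{-1})$ remains near $\lambda_i$ on a neighbourhood of $(\tilde\mu_0,\tilde\mu_1)$, the operator $T$ equals a nonzero scalar times $Id$ minus a compact operator, hence is Fredholm of index $0$; consequently $T^m\phi=0$ has a nontrivial solution for some $m\in\mathbb{N}$ if and only if $\ker T\ne\{0\}$ (an injective $T$ is bijective, and then so is each $T^m$), i.e.\ if and only if $\lambda(\mu_0^{-1},\mu_1^{-1})\in\sigma(\mathcal{B})$.

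Next I would localise the spectrum. As $\lambda_i$ is an isolated eigenvalue of $\mathcal{K}^*_{\partial D}$ of finite algebraic multiplicity $m_i$, fix a small circle $\Gamma_i=\{|\zeta-\lambda_i|=\varrho\}$ separating $\lambda_i$ from $\sigma(\mathcal{K}^*_{\partial D})\setminus\{\lambda_i\}$. By norm-continuity of $\mathcal{B}$ there are $\omega_i>0$ and neighbourhoods $U\ni(\tilde\mu_0,\tilde\mu_1)$ and $V$ of the fixed admissible pair $(\eps_0,\eps_1)$ such that $\Gamma_i\cap\sigma(\mathcal{B})=\emptyset$ for $\omega<\omega_i$ and parameters in $U\times V$; the Riesz projection $P:=\frac{1}{2\pi\mathrm{i}}\oint_{\Gamma_i}(\zeta-\mathcal{B})^{-1}\,d\zeta$ is then bounded, holomorphic in the parameters, of constant rank $m_i$, and at $\omega=0$ coincides with the parameter-independent projection $P_0$ of $\mathcal{K}^*_{\partial D}$ at $\lambda_i$. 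After shrinking $U$ so that $\lambda(\mu_0^{-1},\mu_1^{-1})$ lies inside $\Gamma_i$, resonance near $(\tilde\mu_0,\tilde\mu_1)$ is equivalent to $\lambda(\mu_0^{-1},\mu_1^{-1})\in\sigma\big(\mathcal{B}|_{\operatorname{Ran}P}\big)$, that is, to
\[
g(\mu_0,\mu_1,\eps_0,\eps_1,\omega):=\det\!\big(\lambda(\mu_0^{-1},\mu_1^{-1})\,I_{m_i}-B(\mu_0,\mu_1,\eps_0,\eps_1,\omega)\big)=0,
\]
where $B$ is the matrix of $\mathcal{B}|_{\operatorname{Ran}P}$ in the holomorphically varying basis $\{Pe_k\}_{k=1}^{m_i}$ built from a Jordan basis $\{e_k\}$ of $\operatorname{Ran}P_0$. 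Hence $g$ is holomorphic in $(\mu_0,\mu_1,\eps_0,\eps_1)$, and since $B(\cdot,0)=\lambda_i I_{m_i}+N_0$ with $N_0$ nilpotent upper triangular in that basis, $g(\mu_0,\mu_1,\eps_0,\eps_1,0)=\big(\lambda(\mu_0^{-1},\mu_1^{-1})-\lambda_i\big)^{m_i}$.

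Finally I would analyse $\{g=0\}$. Using the submersion property, pick a biholomorphism $(\mu_0,\mu_1)\mapsto(w_1,w_2)$ near $(\tilde\mu_0,\tilde\mu_1)$ with $w_1=\lambda(\mu_0^{-1},\mu_1^{-1})-\lambda_i$; in the variables $(w_1,w_2,\eps_0,\eps_1,\omega)$ the function $g$ becomes $\tilde g$ with $\tilde g(w_1,w_2,\eps_0,\eps_1,0)=w_1^{m_i}$, hence $\tilde g$ is regular of order $m_i$ in $w_1$ at the base point. The Weierstrass preparation theorem then gives $\tilde g=u\cdot W$ near the base point, with $u$ holomorphic and nowhere zero and $W(w_1;w_2,\eps_0,\eps_1,\omega)=w_1^{m_i}+\sum_{j=0}^{m_i-1}a_j(w_2,\eps_0,\eps_1,\omega)w_1^{j}$ a Weierstrass polynomial, the $a_j$ holomorphic and vanishing at $(\tilde w_2,\tilde\eps_0,\tilde\eps_1,0)$. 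Thus for each fixed $\omega<\omega_i$ the resonance set near $(\tilde\mu_0,\tilde\mu_1)$ equals $\{W(\cdot,\omega)=0\}$, which---$W(\cdot,\omega)$ being monic, hence not identically zero, of degree $m_i\ge1$ in $w_1$---is a non-empty analytic subset of pure complex codimension $1$, i.e.\ the asserted codimension-one surface; when $m_i=1$ one may bypass Weierstrass preparation, since $\partial_{w_1}\tilde g=1\ne0$ at the base point and the holomorphic implicit function theorem directly yields a smooth hypersurface. I expect the essential difficulty to be precisely the case $m_i>1$, where the differential of $g$ vanishes at the unperturbed point, the implicit function theorem is unavailable, and one must argue structurally via the identity $g(\cdot,0)=(\lambda(\mu_0^{-1},\mu_1^{-1})-\lambda_i)^{m_i}$ together with Weierstrass regularity; a secondary, more bookkeeping, point is to verify that the $\mathcal{O}(\omega^2)$ remainders in \eqref{perturbation} depend holomorphically on $(\mu_0,\mu_1,\eps_0,\eps_1)$---including the choice of branch of the square roots defining $k_0,k_1$---which is what fixes the admissible neighbourhood $U\times V$.
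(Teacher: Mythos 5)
Your proof is correct, and it takes a genuinely different and in my view more robust route than the paper. The paper's own argument introduces a "(non-unique) smooth choice" of perturbed eigenvalue branch $\tilde\lambda_i(\mu_0,\mu_1,\eps_0,\eps_1,\omega)$ near $\lambda_i$, defines $F_i=\tilde\lambda_i-\lambda(\mu_0^{-1},\mu_1^{-1})$, and invokes the implicit function theorem using $\partial_{\mu_0,\mu_1}F_i\neq 0$ at $\omega=0$. This is delicate precisely when $m_i>1$: perturbed eigenvalue branches of a defective eigenvalue are in general only Puiseux-analytic in the perturbation parameter, so the advertised smoothness of $F_i$ requires justification, and even a smooth selection may only account for part of the resonance set. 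Your reduction to the scalar condition $g=\det(\lambda(\mu_0^{-1},\mu_1^{-1})I_{m_i}-B)=0$ through a holomorphically varying Riesz projection is the standard way to avoid this: $g$ is genuinely holomorphic in the parameters, detects the entire spectrum of $\mathcal{B}$ inside $\Gamma_i$, and collapses to $(\lambda(\mu_0^{-1},\mu_1^{-1})-\lambda_i)^{m_i}$ at $\omega=0$. You then correctly observe that when $m_i>1$ the differential of $g$ vanishes at the base point, so the implicit function theorem is unavailable and Weierstrass preparation (with $g$ being $w_1$-regular of order $m_i$ after the submersive change of variables) is the right tool to deliver a pure codimension-one analytic hypersurface; when $m_i=1$ your argument degenerates to the paper's IFT argument. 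In short, your proof buys a clean treatment of the defective-eigenvalue case at the modest cost of the Riesz-projection and Weierstrass machinery. The only cosmetic issue is your parenthetical ``$\lambda_i\in(-1/2,1/2)\setminus\{0\}$''; the paper allows $\lambda_i=1/2$, in which case $\mu_0/\mu_1=0$, but $D_i$ is still non-empty since only $(\mu_0,\mu_1)=(0,0)$ is excluded, so nothing in your argument actually breaks.
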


\begin{proof}
Given a non-zero $\lambda_i \in \sigma( \mathcal{K}_{\partial D}^* )$, we consider a function $F_i$ defined over $\partial D$ and $\lambda_i \in \sigma( \mathcal{K}_{\partial D}^* )$.
In particular, by Osborn's Theorem \cite{osborn} and the smooth dependence of $\mathcal{E}$ on $(\mu_0,\mu_1,\eps_0,\eps_1,\omega )$, there exists $0 < \tilde{\omega}_i \ll  1$ (depending on $i$) such that we have a (non-unique) smooth choice of function:
\beqnx
F_{i,\delta} :  \mathbb{C}^2 \backslash\{\mu_0 - \mu_1 = 0 \} \times ( \mathbb{C} \backslash \mathbb{R}^+ )^2 \times (0, \tilde{ \omega_i } ) & \rightarrow & \mathbb{C}, \\
F_i (\mu_0, \mu_1, \varepsilon_0, \varepsilon_1, \omega) &=&  \widetilde{\lambda}_{i}  (\mu_0, \mu_1, \varepsilon_0, \varepsilon_1, \omega) -  \lambda(\mu_0^{-1}, \mu_1^{-2} ) ,
\eqnx
where
\beqnx
\tilde{\lambda}_{i}  (\mu_0, \mu_1, \varepsilon_0, \varepsilon_1, \omega) \in \sigma\left( \mathcal{K}_{\partial D}^* +  \omega^2  \mathcal{E}_{\mu_0,\mu_1,\eps_0,\eps_1,\omega,\partial D, -3} \right)
\eqnx
is such that 
\[
\lim_{\varepsilon \rightarrow 0} \tilde{\lambda}_{i}  (\mu_0, \mu_1, \varepsilon_0, \varepsilon_1, \omega)  = \lambda_i \,.
\]
We now note that, for any $(\tilde{\mu}_0, \tilde{\mu}_1) \in D_i$ in this set,
\[
F_i (\tilde{\mu}_0, \tilde{\mu}_1, \varepsilon_0, \varepsilon_1, 0) = 0,
\]
for all $\varepsilon_0, \varepsilon_1$ in the domain of the function.
Moreover, we can directly verify that
\[
\partial_{\omega}  F_i (\tilde{\mu}_0,\tilde{\mu}_1, \varepsilon_0, \varepsilon_1, 0)  = 0 \,, \quad  \partial_{\varepsilon_0, \varepsilon_1}  F_i (\tilde{\mu}_0,\tilde{\mu}_1, \varepsilon_0, \varepsilon_1, 0)  = 0 ,
\]
whereas
\[
\partial_{\mu_0,\mu_1} F_i (\tilde{\mu}_0,\tilde{\mu}_1, \varepsilon_0, \varepsilon_1, 0) = \partial_{\mu_0,\mu_1}   \lambda(\tilde{\mu}_0^{-1}, \tilde{\mu}_1^{-1} )    = \left( -\f{\tilde\mu_1}{2(\tilde\mu_1-\tilde\mu_0)^2}, \f{\tilde \mu_0}{2(\tilde\mu_1-\tilde\mu_0)^2} \right).
\]
Hence, we have
\[
 \partial_{\mu_0} F_i (\tilde{\mu}_0,\tilde{\mu}_1, \varepsilon_0, \varepsilon_1, 0) \neq 0 \quad \text{ or } \quad  \partial_{\mu_1} F_i (\tilde{\mu}_0,\tilde{\mu}_1, \varepsilon_0, \varepsilon_1, 0) \neq 0\,.
\]
Therefore, applying the inverse function theorem in a neighborhood of any chosen point in $ D_i  \times ( \mathbb{C} \backslash \mathbb{R}^+ )^2 \times \{ 0\}  $, we obtain either a unique smooth function $l_0: B_{\delta}(\tilde{\mu}_0) \rightarrow l_0 (B_{\delta}(\tilde{\mu}_0))$ fulfilling 
\[
F_i (\mu_0, l_{0,1}(\mu_0), l_{0,2}(\mu_0), l_{0,3}(\mu_0), l_{0,4}(\mu_0))  = 0,
\]
or a unique smooth function $l_1: B_{\delta}(\tilde{\mu}_1) \rightarrow l_0 (B_{\delta}(\tilde{\mu}_1))$ fulfilling
\[
F_i (l_{1,1}(\mu_1), \mu_1, l_{1,2}(\mu_1), l_{1,3}(\mu_1), l_{1,4}(\mu_1))  = 0. 
\]
If we obtain $l_0$, let us take  $ \omega_i \leq \tilde{\omega_i} $ such that $ \omega_i \in l_{0,3} (B_{\delta}(\tilde{\mu}_0))$.
Otherwise, we take $ \omega_i \leq \tilde{\omega}_i $ such that $ \omega_i \in l_{1,3} (B_{\delta}(\tilde{\mu}_1))$. The conclusion stated in the lemma readily follows.  
\end{proof}

By Lemma~\ref{existence}, we easily see that there are infinitely many choices of $(\varepsilon_1, \mu_1)$ such that the (genearalized) plasmon resonance occurs around $\lambda_i$. Combining this with a similar perturbation argument as in the proof of Lemma~\ref{existence}, our conclusions of the plasmon resonance in the electrostatic case transfers to the Helmholtz transmission problem to show concentration of plasmon resonances at high-curvature points.  For instance, we have the following result.
\begin{Theorem}
\label{theorem4}
Given any $x \in \partial D$, let us consider $ \{ \chi_{x,\delta} \}_{\delta > 0}$ being a family of smooth nonnegative bump functions compactly supported in $B_{\delta} (x)$ with $ \int_{\partial D} \chi_{p,\delta} \, d \sigma = 1$.
Under Assumption (A), given $r \leq s$, $\alpha\in\mathbb{R}$ and $p, q \in \partial D$,
we have a choice of $\delta(h)$ and $\omega(h) $ both depending on $r,s,p,q$ and $\alpha$ such that for any $\omega <  \omega (h)$, there exists 
$$\left( ( \mu_{0,i},\mu_{1,i},\eps_{0,i},\eps_{1,i}, \omega ), m_i , \phi_{ \mu_{0,i},\mu_{1,i},\eps_{0,i},\eps_{1,i}, \omega, m_i }  \right) $$ solving \eqref{generalized_resonance}, and as $h\rightarrow +0$, we have $\delta(h) \rightarrow 0$,  $\omega(h) \rightarrow 0$ and
{
\beqn
\begin{split}
& \frac{
\sum_{r h \leq \lambda^2_i  \leq s h} c_i \,
 \int_{\partial D} \chi_{p,\delta (h) }(x) | |D|^{\alpha}  \phi_{ \mu_{0,i},\mu_{1,i},\eps_{0,i},\eps_{1,i}, \omega, m_i }  (x) |^2 d \sigma (x) 
}
{
\sum_{r h \leq \lambda^2_i  \leq s h} c_i \,
 \int_{\partial D} \chi_{q,\delta (h) }(x) | |D|^{\alpha}  \phi_{ \mu_{0,i},\mu_{1,i},\eps_{0,i},\eps_{1,i}, \omega, m_i }  (x) |^2 d \sigma (x) 
} \\
=&  \frac{ \int_{ \{ H (p, \cdot) =1 \} }  | \xi |_{g(p)}^{1+ 2 \alpha }  d \sigma_{p,H}  }{ \int_{ \{ H (q, \cdot) =1 \} }  | \xi |_{g(q)}^{1+ 2 \alpha }   d \sigma_{q,H}  } + o_{r,s,p,q,\alpha}(1), 
\label{concentration11}
\end{split}
\eqn
}where $c_i := | \phi_i |_{H^{-\frac{1}{2}}(\partial D , d \sigma)}^{-2} $.  Here, the little-$o$ depends on $r,s,p,q$ and $\alpha$.
\end{Theorem}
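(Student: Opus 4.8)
The plan is to derive Theorem~\ref{theorem4} from the electrostatic concentration estimate of Theorem~\ref{theorem1}, together with the two quasi-static perturbation results Lemma~\ref{existence} (existence of generalized plasmon resonances near every nonzero NP eigenvalue) and Lemma~\ref{close} (the fact that, after applying $|D|^\alpha$, a perturbed resonant density is $\mathcal{C}^0$-close, with error $\mathcal{O}_{i,\alpha}(\omega^2)$, to a genuine NP eigenfunction $\phi_i$). The underlying principle is that in the quasi-static regime the resonance equation \eqref{generalized_resonance} is an $\omega^2$-perturbation of the electrostatic NP eigenvalue problem, so the concentration profile --- which, via Theorems~\ref{theorem1}--\ref{theorem2}, is governed by the geometry of the characteristic variety --- persists up to an error that we are free to make negligible by letting $\omega$ tend to $0$ faster than everything else.

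First I would fix $h$ and observe that, since $\partial D$ is smooth, $\mathcal{K}^*_{\partial D}$ is compact with $0$ as its only accumulation point; hence the index set $J(h):=\{\,i:\ rh\le\lambda_i^2\le sh\,\}$ is finite, and (taking $r>0$, as is the regime of interest) every $\lambda_i$ with $i\in J(h)$ is nonzero. For each such $i$, Lemma~\ref{existence} provides parameters $(\mu_{0,i},\mu_{1,i},\varepsilon_{0,i},\varepsilon_{1,i})$ and a threshold $\omega_i>0$ such that, whenever $\omega<\omega_i$, equation \eqref{generalized_resonance} admits a solution $\big((\mu_{0,i},\mu_{1,i},\varepsilon_{0,i},\varepsilon_{1,i},\omega),m_i,\phi_{\mu_{0,i},\mu_{1,i},\varepsilon_{0,i},\varepsilon_{1,i},\omega,m_i}\big)$ of unit $L^2$-norm, with $m_i$ at most the algebraic multiplicity of $\lambda_i$; Lemma~\ref{close} then yields
\[
\big\|\,|D|^\alpha\phi_{\mu_{0,i},\mu_{1,i},\varepsilon_{0,i},\varepsilon_{1,i},\omega,m_i}-|D|^\alpha\phi_i\,\big\|_{\mathcal{C}^0(\partial D)}=\mathcal{O}_{i,\alpha}(\omega^2).
\]
Combining the pointwise bound $\big|\,|a|^2-|b|^2\,\big|\le|a-b|\,(|a|+|b|)$ with $\int_{\partial D}\chi_{y,\delta}\,d\sigma=1$ gives, for $y=p,q$,
\[
\Big|\int_{\partial D}\chi_{y,\delta(h)}\big(\,|\,|D|^\alpha\phi_{\mu_{0,i},\ldots,m_i}|^2-|\,|D|^\alpha\phi_i|^2\,\big)\,d\sigma\Big|=\mathcal{O}_{i,\alpha}(\omega^2).
\]

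Next I would invoke Theorem~\ref{theorem1}, which supplies a choice $\delta(h)\to0$ making the ratio in \eqref{concentration11}, with the genuine eigenfunctions $\phi_i$ in place of the perturbed densities, converge to the stated right-hand side; its proof (see \eqref{eq:nnn1}) moreover shows that the two sums $\sum_{i\in J(h)}c_i\int_{\partial D}\chi_{p,\delta(h)}|\,|D|^\alpha\phi_i|^2\,d\sigma$ and its $q$-analogue are positive for all small $h$, of order $h^{-(d+2\alpha)}$. Since for fixed $h$ the quantity $\sum_{i\in J(h)}c_i\,\mathcal{O}_{i,\alpha}(\omega^2)$ is $O(\omega^2)$ with a finite (though $i$- and $h$-dependent) constant, I can then choose $\omega(h)$ strictly below $\min\{\omega_i:i\in J(h)\}$ and small enough that this perturbative sum is $o(1)$ relative to each of the two electrostatic sums, while also $\omega(h)\to0$ as $h\to0$. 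With this choice the numerator and denominator of the left-hand side of \eqref{concentration11} each coincide with their electrostatic counterparts up to a multiplicative factor $1+o_{r,s,p,q,\alpha}(1)$, and \eqref{concentration11} follows from Theorem~\ref{theorem1} by an elementary manipulation of ratios.

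The main obstacle is the lack of uniformity in $i$ of the constants $\mathcal{O}_{i,\alpha}(\omega^2)$ coming from Lemma~\ref{close}: because the spectral gaps of $\mathcal{K}^*_{\partial D}$ shrink as $\lambda_i\to0$, these constants need not remain bounded as $i\to\infty$, so no single $\omega$ can serve for all $h$. This is exactly why $\omega(h)$ is allowed to depend on $h$ --- for each fixed $h$ only finitely many indices occur, so one small $\omega(h)$, chosen below all the finitely many thresholds $\omega_i$ and small enough for the error control yet still tending to $0$, controls all of them simultaneously. The only delicate step in the bookkeeping is checking that these competing smallness requirements on $\omega(h)$ are mutually compatible while keeping $\omega(h)\to0$, which holds since each is a strict inequality against a positive quantity determined by $h$.
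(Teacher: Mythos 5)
Your proposal is correct and follows essentially the same route as the paper's proof: Theorem~\ref{theorem1} provides $\delta(h)$ and the electrostatic ratio, Lemma~\ref{existence} and Lemma~\ref{close} provide the perturbed resonances together with the $\mathcal{O}_{i,\alpha}(\omega^2)$ closeness in $\mathcal{C}^0$, and finiteness of $J(h)$ (hence finitely many thresholds $\omega_i$ and finitely many constants $C_{i,\alpha}$) lets one pick $\omega(h)<\min_{i\in J(h)}\omega_i$ small enough to absorb the error while $\omega(h)\to 0$. The paper expresses the required smallness of $\omega(h)$ by an explicit bound on $\| \,|\,|D|^{\alpha}\phi_{\mu_{0,i},\ldots,m_i}|^2-|\,|D|^{\alpha}\phi_i|^2\,\|_{\mathcal{C}^0}$ rather than your $o(1)$-relative-to-sums phrasing, but this is the same bookkeeping.
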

\begin{proof}
From Theorem \ref{theorem1},  we have a choice of $\delta(h)$ depending on $r,s,p,q$ and $\alpha$ such that,
for any given $\varepsilon>0$, there exists $h_0$ depending on $r,s,p,q,\alpha$ such that for all $h < h_0$,
\beqn
\left| \frac{ \sum_{r \leq \lambda_i^2 (h) \leq s} c_i  \int_{\partial D} \chi_{p,\delta (h) }(x) | | D |^{\alpha}   \, \phi_i (x) |^2 d \sigma }{ \sum_{r \leq \lambda_i^2 (h) \leq s} c_i  \int_{\partial D} \chi_{q,\delta (h) }(x) | | D |^{\alpha}   \, \phi_i (x) |^2 d \sigma } -  \frac{ \int_{ \{ H (p, \cdot) =1 \} }  | \xi |_{g(p)}^{1+ 2 \alpha }  d \sigma_{p,H}  }{ \int_{ \{ H (q, \cdot) =1 \} }  | \xi |_{g(q)}^{1+ 2 \alpha }   d \sigma_{q,H}  }  \right| \leq \varepsilon.
\label{ineq11a}
\eqn
For each $h < h_0$, from Lemma \ref{existence}, there exists $\tilde{\omega}(h) := \min_{\{ i \in \mathbb{N}:  r h \leq \lambda^2_i  \leq s h\} }\{ \omega_i \}$ such that for all $\omega < \omega (h)$, there exists 
$$\left( ( \mu_{0,i},\mu_{1,i},\eps_{0,i},\eps_{1,i}, \omega ), m_i , \phi_{ \mu_{0,i},\mu_{1,i},\eps_{0,i},\eps_{1,i}, \omega, m_i }  \right) $$ solving \eqref{generalized_resonance}.   By Lemma \ref{close}, upon a rescaling of $\phi_{ \mu_{0,i},\mu_{1,i},\eps_{0,i},\eps_{1,i}, \omega, m_i } $ while still denoting it as $\phi_{ \mu_{0,i},\mu_{1,i},\eps_{0,i},\eps_{1,i}, \omega, m_i } $, we have
\beqnx
\| |D|^{\alpha} \phi_{ \mu_{0,i},\mu_{1,i},\eps_{0,i},\eps_{1,i}, \omega, m_i }  - |D|^{\alpha} \phi_i \|_{\mathcal{C}^{0 }(\partial D)} \leq  C_{i,\alpha} \omega^2.
\eqnx
In particular, we can make a smaller choice of  $\omega(h) < \tilde{\omega}(h) $ depending on $r,s,p,q,\alpha$ such that for all $\omega < \omega(h)$,we have
{\small
\beqnx
& &\| | |D|^{\alpha} \phi_{ \mu_{0,i},\mu_{1,i},\eps_{0,i},\eps_{1,i}, \omega, m_i } |^2  - | |D|^{\alpha} \phi_i |^2 \|_{\mathcal{C}^{0 }(\partial D)}  \\
&\leq & 10^{-2} \varepsilon / \sum_{r h \leq \lambda^2_i  \leq s h} c_i / \min \left\{1,  \min_{y = p,q} \left\{  \left( \sum_{r \leq \lambda_i^2 (h) \leq s} c_i  \int_{\partial D} \chi_{y,\delta (h) }(x) | | D |^{\alpha}   \, \phi_i (x) |^2 d \sigma  \right)^{-2} \right \}  \right \} \,.
\eqnx
}Therefore, with this choice of $\omega(h) $, we have, for all $\omega < \omega(h)$,

\begin{equation}\label{ineq11b}
\begin{split}
&\Bigg|
\frac{
\sum_{r h \leq \lambda^2_i  \leq s h} c_i \,
 \int_{\partial D} \chi_{p,\delta (h) }(x) | |D|^{\alpha}  \phi_{ \mu_{0,i},\mu_{1,i},\eps_{0,i},\eps_{1,i}, \omega, m_i }  (x) |^2 d \sigma (x) 
}
{
\sum_{r h \leq \lambda^2_i  \leq s h} c_i \,
 \int_{\partial D} \chi_{q,\delta (h) }(x) | |D|^{\alpha}  \phi_{ \mu_{0,i},\mu_{1,i},\eps_{0,i},\eps_{1,i}, \omega, m_i }  (x) |^2 d \sigma (x) 
}\\
& \qquad\qquad\qquad-
\frac{ \sum_{r \leq \lambda_i^2 (h) \leq s} c_i  \int_{\partial D} \chi_{p,\delta (h) }(x) | | D |^{\alpha}   \, \phi_i (x) |^2 d \sigma }{ \sum_{r \leq \lambda_i^2 (h) \leq s} c_i  \int_{\partial D} \chi_{q,\delta (h) }(x) | | D |^{\alpha}   \, \phi_i (x) |^2 d \sigma } \Bigg| \leq \varepsilon.
\end{split}
\end{equation}
Combining \eqref{ineq11b} with \eqref{ineq11a} readily yields our conclusion.

The proof is complete. 
\end{proof}

Likewise we obtain the following result.
\begin{Theorem}
\label{theorem5}
Under Assumption (A), given $r,s$, there exists $S(h) \subset J(h) := \{i \in \mathbb{N} : r h \leq \lambda^2_i  \leq s h  \}$ and $\omega(h) $ such that, for all $\varphi \in \mathcal{C}^\infty ( \partial D)$, we have for any $\omega < \omega (h)$, there exists 
$$\left( ( \mu_{0,i},\mu_{1,i},\eps_{0,i},\eps_{1,i}, \omega ), m_i , \phi_{ \mu_{0,i},\mu_{1,i},\eps_{0,i},\eps_{1,i}, \omega, m_i }  \right) $$ solving \eqref{generalized_resonance}, such that as $h\rightarrow+0$, we have $\omega(h) \rightarrow 0$ and
\begin{equation}\label{eq:tt1}
\begin{split}
& \max_{i \in S(h)}  \bigg|   \int_{ \partial D}  \varphi(x) \bigg( c_i \, | |D|^{-\frac{1}{2}} \phi_{ \mu_{0,i},\mu_{1,i},\eps_{0,i},\eps_{1,i}, \omega, m_i }  (x) |^2\\
& \hspace*{2cm} - \int_{M_{X_{H},\text{erg}}(\{ H = 1\})}  \mu (x)  g_i (\mu) d \nu(\mu)  \bigg) d \sigma(x)
 \bigg|= o_{r,s}(1)  \,.
 \end{split}
\end{equation}
Here, $S(h)$, $\{ g_{i} : M_{X_{H},\text{erg}}(\{ H = 1\}) \rightarrow \mathbb{C}  \}_{i \in \mathbb{N} }$ and $ \mu(p)$ are described as in Theorem~\ref{theorem2}.
In particular, we remind that
\beqnx
 \frac{ \int_{M_{X_{H},\text{erg}}(\{ H = 1\})}   \mu (p)  d \nu(\mu) }{ \int_{M_{X_{H},\text{erg}}(\{ H = 1\})}   \mu (q)  d \nu(\mu) }  =  \frac{ \int_{\{ H(p,\cdot)=1\}} d \sigma_{p,H} }{ \int_{\{ H(q,\cdot)=1\}} d \sigma_{q,H} } \text{ a.e. } (d \sigma \otimes d \sigma) (p,q) \,.
\eqnx
If the Hamiltonian flow given by $X_H$ is ergodic with respect to $\sigma_H$ on $\{H = 1\}$, then 
\beqnx
\max_{i \in S(h)}  \left|   \int_{ \partial D}  \varphi(x) \left( c_i \, | |D|^{-\frac{1}{2}} \phi_{ \mu_{0,i},\mu_{1,i},\eps_{0,i},\eps_{1,i}, \omega, m_i }  (x) |^2  -  \frac{ \int_{\{ H(x,\cdot)=1\}} d \sigma_{x,H}  }{ \sigma_H(\{H =1\})}   \right) d \sigma(x)
 \right|= o_{r,s}(1)  \,.
\eqnx
\end{Theorem}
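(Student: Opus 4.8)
The plan is to deduce Theorem~\ref{theorem5} from Theorem~\ref{theorem2} (and from Corollary~\ref{corollary3} in the ergodic case) by the same kind of perturbation argument that turns Theorem~\ref{theorem1} into Theorem~\ref{theorem4}. I would begin by applying Theorem~\ref{theorem2} to the given $r\le s$ (we assume $r>0$, the relevant regime, so that every $\lambda_i$ with $i\in J(h):=\{i\in\mathbb N:rh\le\lambda_i^2\le sh\}$ is nonzero): this produces a set $S(h)\subset J(h)$ with $\#S(h)/\#J(h)=1+o_{r,s}(1)$, the operators $\mathcal K_\mu$, the weights $g_i(\mu)$ and the densities $\mu(x)$, together with
\[
\max_{i\in S(h)}\Bigl|\int_{\partial D}\varphi(x)\Bigl(c_i\,|\,|D|^{-\tfrac12}\phi_i(x)|^2-\int_{M_{X_H,\mathrm{erg}}(\{H=1\})}\mu(x)\,g_i(\mu)\,d\nu(\mu)\Bigr)\,d\sigma(x)\Bigr|=o_{r,s}(1),
\]
the key point (recorded at the end of the proof of Theorem~\ref{theorem2}) being that $S(h)$ is chosen \emph{independently of $\varphi$}. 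The ratio identity for $\int\mu(p)\,d\nu(\mu)\big/\int\mu(q)\,d\nu(\mu)$ is inherited verbatim from \eqref{eq:pu4}. Thus it only remains to replace each $\phi_i$ above by a genuine resonant density and control the resulting error.

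For each fixed small $h$ I would apply Lemma~\ref{existence} to each $\lambda_i$, $i\in J(h)$ (a finite set of nonzero values), obtaining $\widetilde\omega(h):=\min_{i\in J(h)}\omega_i>0$ such that for all $\omega<\widetilde\omega(h)$ there exist $(\mu_{0,i},\mu_{1,i},\eps_{0,i},\eps_{1,i},\omega)$, $m_i\in\mathbb N$ and $\Psi_i:=\phi_{\mu_{0,i},\mu_{1,i},\eps_{0,i},\eps_{1,i},\omega,m_i}\in H^{-1/2}(\partial D,d\sigma)$ solving \eqref{generalized_resonance} with $\lambda(\mu_{0,i}^{-1},\mu_{1,i}^{-1})\to\lambda_i$ as $\omega\to0$. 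Feeding $\Psi_i$ into Lemma~\ref{close} with $s=-\tfrac12$ and performing the rescaling stated there (still denoting the result $\Psi_i$), we get $\| |D|^{-\tfrac12}\Psi_i-|D|^{-\tfrac12}\phi_i \|_{\mathcal C^0(\partial D)}\le C_i\,\omega^2$; since $|D|^{-1/2}\phi_i\in\mathcal C^\infty(\partial D)$ this upgrades, with a larger constant $C_i'$ depending only on $i$, to $\bigl\|\,|\,|D|^{-\tfrac12}\Psi_i|^2-|\,|D|^{-\tfrac12}\phi_i|^2\,\bigr\|_{\mathcal C^0(\partial D)}\le C_i'\,\omega^2$.

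The last, and only slightly delicate, step is the choice of $\omega(h)$. For each $h$ I would pick any $\omega(h)\in(0,\widetilde\omega(h))$ with $\omega(h)\le\sqrt h$ and $\omega(h)^2\sum_{i\in J(h)}c_iC_i'\le h$; this is possible because $J(h)$ is finite and the constants $c_i,C_i'$ do not depend on $\varphi$ or on $\omega$, so $\omega(h)$ depends only on $r,s$ and $\partial D$, and $\omega(h)\to0$ as $h\to+0$. Then for \emph{any} $\varphi\in\mathcal C^\infty(\partial D)$, any $\omega<\omega(h)$ and any $i\in S(h)$,
\[
\Bigl|\int_{\partial D}\varphi\,c_i\bigl(|\,|D|^{-\tfrac12}\Psi_i|^2-|\,|D|^{-\tfrac12}\phi_i|^2\bigr)\,d\sigma\Bigr|\le\|\varphi\|_{\mathcal C^0(\partial D)}\,\sigma(\partial D)\,c_iC_i'\,\omega^2\le\|\varphi\|_{\mathcal C^0(\partial D)}\,\sigma(\partial D)\,h\xrightarrow[h\to+0]{}0,
\]
so the rate picks up only a harmless $\varphi$-dependent prefactor while $S(h)$ and $\omega(h)$ themselves stay $\varphi$-free. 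Adding this to the displayed estimate from Theorem~\ref{theorem2} via the triangle inequality gives \eqref{eq:tt1}. For the ergodic case one repeats the argument with Corollary~\ref{corollary3} in place of Theorem~\ref{theorem2}, which replaces $\int_{M_{X_H,\mathrm{erg}}}\mu(x)g_i(\mu)\,d\nu(\mu)$ by $\int_{\{H(x,\cdot)=1\}}d\sigma_{x,H}\big/\sigma_H(\{H=1\})$, and the same $O(\omega^2)$ perturbation bound yields the final displayed identity. I expect the main (and essentially the only) obstacle to be exactly this quantifier bookkeeping — fixing $\omega(h)$ before $\varphi$ while still forcing the perturbation error to vanish — which is resolved by observing that for each $h$ only finitely many indices are involved and all relevant constants are $\varphi$-independent; the genuinely substantial inputs (Theorem~\ref{theorem2}, Lemma~\ref{close}, Lemma~\ref{existence}) are already in hand.
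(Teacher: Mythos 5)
Your proposal is correct and follows essentially the same route as the paper: invoke Theorem~\ref{theorem2} (or Corollary~\ref{corollary3}) to get the $\varphi$-independent set $S(h)$, use Lemma~\ref{existence} to produce the perturbed resonant densities over the finitely many $i\in J(h)$, control the difference via Lemma~\ref{close}, and then shrink $\omega(h)$ so the perturbation error vanishes as $h\to+0$. Your bookkeeping is in fact a bit cleaner than the paper's (which lets $\omega(h)$ depend on an auxiliary $\eps$): by pinning $\omega(h)$ down through the explicit $h$-dependent conditions $\omega(h)\le\sqrt{h}$ and $\omega(h)^2\sum_{i\in J(h)}c_iC_i'\le h$, you make it transparent that $\omega(h)$ depends only on $r,s,\partial D$, with the $\varphi$-dependence confined to a harmless prefactor $\|\varphi\|_{\mathcal C^0(\partial D)}$.
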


\begin{proof}
Let $r,s $ be given.  Consider $\varphi \in \mathcal{C}^{\infty}(\partial D)$.
Given $\eps > 0$, by Theorem \ref{theorem2} and considering $h_0$ small enough such that for all $ h < h_0$, we have
\beqnx
\max_{i \in S(h)}  \left|   \int_{ \partial D}  \varphi(x) \left( c_i \, |D|^{-\frac{1}{2}} \phi_i (x) |^2 - \int_{M_{X_{H},\text{erg}}(\{ H = 1\})}  \mu (x)  g_i (\mu) d \nu(\mu)  \right) d \sigma(x)
 \right| \leq \eps .
\eqnx
Now, for each $h < h_0$, from Lemma \ref{existence}, there exists $\tilde{\omega}(h) = \min \left\{ \min_{i \in  S(h)} \omega_i, 1 \right\}$ such that for all $\omega < \tilde{ \omega }(h)$, there exists 
$$\left( ( \mu_{0,i},\mu_{1,i},\eps_{0,i},\eps_{1,i}, \omega ), m_i , \phi_{ \mu_{0,i},\mu_{1,i},\eps_{0,i},\eps_{1,i}, \omega, m_i }  \right) $$ solving \eqref{generalized_resonance}.   By Lemma \ref{close}, again upon a rescaling of $\phi_{ \mu_{0,i},\mu_{1,i},\eps_{0,i},\eps_{1,i}, \omega, m_i } $ while still denoting it as $\phi_{ \mu_{0,i},\mu_{1,i},\eps_{0,i},\eps_{1,i}, \omega, m_i } $, we have
\begin{equation}\label{eq:ff1}
\begin{split}
& \max_{i \in S(h)} c_i  \left|   \int_{ \partial D}  \varphi(x) \left(  \, | |D|^{-\frac{1}{2}} \phi_i (x) |^2 -  \, | |D|^{-\frac{1}{2}} \phi_{ \mu_{0,i},\mu_{1,i},\eps_{0,i},\eps_{1,i}, \omega, m_i }  (x) |^2  \right) d \sigma(x)
 \right|\\
 & \leq C_{S(h)} \| \varphi \|_{\mathcal{C}^{0}(\partial D ) } \, \omega^2.
 \end{split}
\end{equation}
We may now choose 
$$ \omega(h) \leq \min \left\{ \eps,  \tilde{ \omega }(h),  \tilde{\omega}(h) / C_{S(h)} \right\} \, .$$
Then for all $\omega < \omega(h)$, we finally have from \eqref{eq:ff1} and Corollary \ref{corollary3} that
\[
\begin{split}
&  \max_{i \in S(h)}  \bigg|   \int_{ \partial D}  \varphi(x) \bigg( c_i \, | |D|^{-\frac{1}{2}} \phi_{ \mu_{0,i},\mu_{1,i},\eps_{0,i},\eps_{1,i}, \omega, m_i }  (x) |^2\\
& \hspace*{2cm} - \int_{M_{X_{H},\text{erg}}(\{ H = 1\})}  \mu (x)  g_i (\mu) d \nu(\mu)  \bigg) d \sigma(x)
 \bigg| \\
 \leq & \left( 1+ \| \varphi \|_{\mathcal{C}^{0}(\partial D ) } \right)  \eps  \, .
\end{split}
\]

The proof is complete. 
\end{proof}


\section*{Acknowledgements}
The work of H Liu was supported by the startup grant from City University of Hong Kong, Hong Kong RGC General Research Funds, 12301218, 12302919 and 12301420.

\appendix
\section{Further remarks upon some geometric behaviors of the Hamiltonian flow} \label{Appendix_A}

In this appendix, we would like to briefly explore some geometric behaviors of the Hamiltonian flow, which should help to gain better understanding of $ {M_{X_{H},\text{erg}}(\{ H = 1\})}  $ as well as translate to the understanding of the NP eigenfunctions. 
We would also like to explore Assumption (A), which is equivalent to $\overline{ \{H = 1\} } \bigcap \left(  \partial D  \times \{0\} \right)  = \emptyset$, and has been imposed in our study up till now.
First, we have the following elementary property of $X_{H}$:
\begin{Lemma}
\label{lemma_well}
If $\xi \neq 0$, then $X_{H} \neq 0$.
\end{Lemma}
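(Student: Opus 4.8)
The plan is to reduce the nonvanishing of the Hamiltonian vector field $X_H$ to the nonvanishing of the fibre gradient $\partial_\xi H$, and to obtain the latter from the homogeneity of $H$ in $\xi$ together with Assumption (A).

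First I would record that, for each fixed $x$, the principal symbol $p:=p_{\mathcal{K}^*_{\partial D},-1}(x,\cdot)$ is positively homogeneous of degree $-1$ in $\xi$ (evident from the coordinate expression $p(x,\xi)=(d-1)\mathcal{H}(x)|\xi|_{g(x)}^{-1}-\langle\mathcal{A}(x)g^{-1}(x)\xi,g^{-1}(x)\xi\rangle|\xi|_{g(x)}^{-3}$), hence $H=p^2$ is positively homogeneous of degree $-2$ in $\xi$ and smooth away from $\partial D\times\{0\}$. Euler's identity then gives $\langle\xi,\partial_\xi H(x,\xi)\rangle=-2H(x,\xi)$ for every $(x,\xi)$ with $\xi\neq0$. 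Next I would invoke the observation already made in Section~\ref{sect:3}, namely that Assumption (A) is equivalent to $H\neq0$ on $T^*(\partial D)\setminus(\partial D\times\{0\})$; combined with the Euler relation this forces $\partial_\xi H(x,\xi)\neq0$ whenever $\xi\neq0$, since a vanishing fibre gradient would make $\langle\xi,\partial_\xi H\rangle=-2H$ vanish.

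Finally I would pass to the vector field: writing $\omega=\sum_j dx_j\wedge d\xi_j$ in canonical coordinates, the defining relation $\iota_{X_H}\omega=dH$ gives $X_H=\sum_j\bigl(\partial_{\xi_j}H\,\partial_{x_j}-\partial_{x_j}H\,\partial_{\xi_j}\bigr)$, so $X_H=0$ would in particular force $\partial_\xi H=0$, which has just been excluded; this proves $X_H\neq0$ for $\xi\neq0$. I do not anticipate a real obstacle here; the one point worth emphasising is that Assumption (A) is genuinely used, and sharply so: since $X_H=2p\,X_p$ and $X_p\neq0$ for $\xi\neq0$ by the same Euler argument applied to the degree $-1$ symbol $p$, the nonvanishing of $X_H$ off the zero section is in fact equivalent to Assumption (A), failing exactly on the characteristic set $\{p=0\}$, which is where the flow of $\tilde H$ degenerates.
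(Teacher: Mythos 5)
Your main argument is correct and is essentially the paper's proof, stated more cleanly: both reduce the nonvanishing of $X_H$ to showing $\langle \xi,\partial_\xi H\rangle\neq 0$ on $\{H=c\}$ with $c\neq0$, the paper by an explicit coordinate computation of $\partial_\xi H = H\,\partial_\xi(\log H)$ and you by invoking Euler's identity for the degree $-2$ homogeneous $H$ (which yields the same $\langle \xi,\partial_\xi H\rangle=-2H$ and sidesteps a small slip in the paper's displayed formula, where an $|N|$ should read $N$). One small caveat on your closing aside: Euler applied to the degree $-1$ symbol $p$ gives $\langle\xi,\partial_\xi p\rangle=-p$, which vanishes exactly on $\{p=0\}$, so it does \emph{not} show $X_p\neq0$ there; the claimed equivalence between nonvanishing of $X_H$ off the zero section and Assumption~(A) is still true, but the ``only if'' direction comes from the trivial observation that $\partial_\xi H=2p\,\partial_\xi p$ and $\partial_x H=2p\,\partial_x p$ force $X_H=0$ identically on $\{p=0\}$, not from a nonvanishing claim about $X_p$.
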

\begin{proof}
Consider $S := \log H$.  Since $X_{H}$ preserves $H$, let us consider its action only on $\{H = c \}$ with $c \neq 0$.  By choosing a local coordinate, one can directly compute that $$ \partial_{\xi} H = H \partial_{\xi} S = 2 c \left( 2 \frac{ (d-1) \mathcal{H}(x) g^{-1}(x) -  g^{-1}(x) \mathcal{A}(x)  g^{-1}(x) \xi }{ \left|  \langle (d-1) \mathcal{H}(x) g^{-1}(x) -  g^{-1}(x) \mathcal{A}(x)  g^{-1}(x) \xi, \xi \rangle \right|  }  - 3 \frac{g^{-1} \xi}{\langle g^{-1}(x) \xi,\xi \rangle } \right) \,.$$
Therefore we immediately infer that $ \langle \partial_{\xi} H , \xi \rangle = 2(\pm 2-3)c $, which ensures that $ \partial_{\xi} H  \neq 0$, and hence our conclusion holds.
\end{proof}

\subsection{The non-singular case when $\overline{ \{H = 1\} } \bigcap \left(  \partial D  \times \{0\} \right)  = \emptyset$ }

Let us assume $\overline{ \{H = 1\} }$ does not contain $(x ,0) \in \partial D \times \{0\} \hookrightarrow T^*(\partial D ) $.  We would like to look into the local property of the flow.  As an example, we only consider $d =3$.  In fact, the condition $\overline{ \{H = 1\} } \bigcap \left(  \partial D \times \{0\} \right)  = \emptyset$ readily implies that the Gaussian curvature $\kappa(x) := \kappa_1(x)\, \kappa_2(x) \neq 0$. 
By the compactness of the surface, we have $\kappa(x) >0 $ for some $x \in \partial D$. Then by continuity we have $\kappa(x) > c $ for all $x \in  \partial D $ for some $c > 0$. An application of the Gauss-Bonnet theorem readily yields the Euler characteristic of the surface $\chi( \partial D) >  0$, and hence $\partial D$ is diffeomorphic to a sphere.  Moreover, there exists $c_0$ such that the matrix $\mathcal{A}(x) > c_0 \, Id $ for all $x \in \partial D$, i.e., the domain $D$ is strictly convex.
The following figure shows a typical example of $ \{ H(x ,\cdot) = 1 \} $.

\begin{figurehere} \centering
\includegraphics[width=4cm,height=2.5cm]{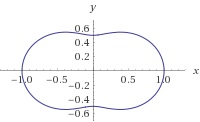}
\caption{Level curve $\overline{ \{ H(a , \xi) = 1 \} } $ for a fixed $a \in \partial D $ when $A(a) = \text{diag} (1, 0.5) $, where $\xi = (x,y) $.} 
\end{figurehere}
\noindent In this case, locally around a point $ (x , \xi ) \in \{H = 1\} $, the flow $ X_H$ given by $\partial_t (x(t),p(t)) = X_{H} (x(t), p(t))$ projects to the $x$-coordinate to give $\partial_t x(t) = \partial_{\xi} H $, which is the normal of the level set $ \{ H(x ,\cdot) = 1 \}$.

In general when $d > 2$, we suppose that $X_{H}$ generates a Hamiltonian circle action (i.e. $\mathbb{T}^1$ action) over $T^* (\partial D)$.  Then by Lemma \ref{lemma_well}, the circle action has no critical point on $ \{H = 1\} = \overline{ \{H = 1\} }$, and that $1$ is a regular value of $H$.  We may therefore perform a symplectic reduction to obtain $ M :=  \{H = 1\} /  \mathbb{T}^1 $. From the compactness of $\{H = 1\}$, $M$ is now a compact symplectic manifold of dimension $2d-4$.  $M$ also provides a parametrization of the set of periodic orbits (equivalently, of the ergodic measures in this case).  In this case, we can appeal to results of classical symplectic geometry to classify the global structure of the flow.

We would like to remark that the non-singular case is rather restrictive, e.g. in $d=3$, any $ \partial \Omega $ not diffeomorphic to $\mathbb{S}^2$ would admit a flow $X_H$ on $\overline{\{ H = 1\}}$ with singularities.

\subsection{The singular case when $\overline{ \{H = 1\} } \bigcap \left(  \partial D \times \{0\} \right) \neq \emptyset$ } 

We can also consider the dynamics where $\overline{ \{H = 1\} }$ may contain points $(x ,0)  \in  \partial D \times \{0\}  $.  
The critical set may now be highly singular and degenerate. 
To appropriately (and mildly) resolve the singularity of $H$ and $X_H$ around $(x,0)$, we consider 
\[
\tilde{\tilde{H}}(x,\xi) := \arctan( H(x,\xi)) = \arctan( \exp ( S(x,\xi) ) )  \,.
\]
One directly verifies that $\tilde{\tilde{H}}$ removes the $(-2)$-order singularity of $H$  (which is smooth away from zero) at $\xi = 0$ in the following sense: that $\tilde{\tilde{H}}$ is now furthermore bounded, and directionally differentiable at $0$, all the while generating a rescaled flow of $X_H$ away from the singularities. 
In particular, one quickly checks that
\beqnx
\partial \tilde{\tilde{H}} = \frac{H}{1+ H^2} \, \partial S = 0 \quad \Leftrightarrow \quad \xi = 0  \,.
\eqnx
Therefore we have the following lemma.
\begin{Lemma}
\label{lemma_label}
$X_{\tilde{\tilde{H}}} = 0$ if and only if $\xi = 0$.  
\end{Lemma}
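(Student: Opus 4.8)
The plan is to reduce the statement to a computation of the zero locus of $d\tilde{\tilde H}$, and then to import the explicit chain rule recorded just before the statement together with the computation already carried out in the proof of Lemma~\ref{lemma_well}.

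First I would note that, since $\omega$ is non-degenerate, the map $v\mapsto\iota_v\omega$ is a fibrewise linear isomorphism $T(T^*(\partial D))\to T^*(T^*(\partial D))$; hence, at any point where $\tilde{\tilde H}$ is differentiable, $X_{\tilde{\tilde H}}=0$ if and only if $d\tilde{\tilde H}=0$. On the open set $\{\xi\neq0\}$, where $H=[p_{\mathcal{K}^*_{\partial D},-1}]^2$ and therefore $\tilde{\tilde H}=\arctan(H)$ is smooth, the chain rule gives
\[
d\tilde{\tilde H}=\frac{1}{1+H^2}\,dH=\frac{H}{1+H^2}\,dS,\qquad S=\log H,
\]
which is exactly the identity displayed before the statement. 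The scalar factor $H/(1+H^2)$ is finite and does not vanish away from $\{H=0\}$, so on $\{\xi\neq0\}\setminus\{H=0\}$ it suffices to show $dH\neq0$. This is precisely the content of the computation in the proof of Lemma~\ref{lemma_well}: on a level set $\{H=c\}$ with $c\neq0$ one obtains $\langle\partial_\xi H,\xi\rangle=2(\pm 2-3)c\neq0$, so already $\partial_\xi H\neq0$, hence $dH\neq0$, hence $d\tilde{\tilde H}\neq0$ and $X_{\tilde{\tilde H}}\neq0$. This settles the ``only if'' direction on $\{\xi\neq0\}$.

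For the converse it remains to show $X_{\tilde{\tilde H}}=0$ on $\partial D\times\{0\}$. Here I would exploit the $(-2)$-homogeneity of $H$ in $\xi$: the reciprocal $1/H$ is $(+2)$-homogeneous in $\xi$ and thus flat to second order as $\xi\to0$, so writing $\tilde{\tilde H}=\tfrac{\pi}{2}-\arctan(1/H)$ exhibits $\tilde{\tilde H}$ as bounded and, as already recorded, directionally differentiable at $0$ with all the surviving directional derivatives equal to zero; equivalently $\tfrac{H}{1+H^2}\partial S=0$ at $\xi=0$, so the rescaled flow $X_{\tilde{\tilde H}}$, which off the singular locus is a positive multiple of $X_H$, vanishes on the zero section.

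The hard part will be exactly this last step: at $\partial D\times\{0\}$ the function $\tilde{\tilde H}$ is not smooth, so the non-degeneracy argument does not apply verbatim and the vanishing of $X_{\tilde{\tilde H}}$ must be extracted from the directional-differentiability structure of $\arctan(1/H)$ together with the second-order flatness of $1/H$. A secondary point to verify, in the case where $H$ has zeros away from the zero section, is that there $dH=2p_{\mathcal{K}^*_{\partial D},-1}\,dp_{\mathcal{K}^*_{\partial D},-1}$ should be inspected directly, the convenient rewriting $d\tilde{\tilde H}=\tfrac{H}{1+H^2}\,dS$ being valid only where $S=\log H$ is finite.
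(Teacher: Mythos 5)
Your plan follows the same route as the paper: compute $\partial\tilde{\tilde H}=\frac{H}{1+H^2}\,\partial S=\frac{1}{1+H^2}\,\partial H$ and then invoke Lemma~\ref{lemma_well} to control $\partial H$. You are right that the crux is the set $\{H=0\}$ away from the zero section, but this is not merely a ``secondary point to verify'' that can be discharged --- it is where the equivalence fails. Writing $p:=p_{\mathcal{K}^*_{\partial D},-1}$, so that $H=p^2$, one has $\partial H=2p\,\partial p$, which vanishes identically on $\{p=0\}=\{H=0\}$ regardless of $\partial p$; hence $\partial\tilde{\tilde H}=\frac{\partial H}{1+H^2}=0$ and $X_{\tilde{\tilde H}}=0$ on all of $\{H=0\}\cap\{\xi\neq0\}$. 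In the singular setting of this subsection --- precisely when $\overline{\{H=1\}}\cap(\partial D\times\{0\})\neq\emptyset$ --- this set is nonempty: by the $(-2)$-homogeneity of $H$ the condition forces $\inf_{|\omega|_{g(x)}=1}H(x,\omega)=0$ for some $x$, so by compactness of the cosphere fibre there is a direction $\omega_0$ with $p(x,\omega_0)=0$, and the whole punctured ray $\{(x,r\omega_0):r>0\}$ then consists of critical points of $\tilde{\tilde H}$. In the paper's own illustrative example ($d=3$, $A(a)=\mathrm{diag}(1,-1)$) these are the four rays $\theta\in\{\pi/4,3\pi/4,5\pi/4,7\pi/4\}$ along which $p$, and with it $\tilde{\tilde H}$, vanish identically. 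Note also that Lemma~\ref{lemma_well} is proved only on levels $\{H=c\}$ with $c\neq0$, so it gives you nothing on $\{H=0\}$.

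What your chain-rule argument does establish, correctly, is the weaker statement $X_{\tilde{\tilde H}}\neq0$ on $\{H\neq0\}$ --- in particular on $\{H=1\}$, which is the set the surrounding discussion actually uses --- together with the vanishing on $\partial D\times\{0\}$ via the second-order flatness of $1/H$. As stated, though, the ``if and only if'' is false in the singular case. To repair it one would either restrict the claim to $\{H\neq0\}$, or replace the even desingularizer $\arctan(p^2)$ by an odd one such as $\arctan(p)$, whose differential $\frac{\partial p}{1+p^2}$ carries no factor of $p$ to annihilate it on $\{H=0\}$; the non-vanishing of $\partial p$ there would then have to be verified directly, since Euler's identity only yields $\langle\partial_\xi p,\xi\rangle=-p$, which is uninformative at $p=0$.
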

The set of critical points $\{X_{\tilde{\tilde{H}}} = 0\} = \partial D \times \{0\} $ are still highly degenerate and very singular. 
As an illustrating example, let us take $d= 3$.  When $d=3$, one of the cases that $\overline{ \{H = 1\} } \bigcap \left(  \partial D  \times \{0\} \right) \neq \emptyset$ is when $\overline{ \{H = 1\} } $ contains a point $(x,0)$ with its mean curvature $H(x) = 0$.  Near such a point $x \in \partial D $, we write $\lambda(x) := \kappa_1(x) = - \kappa_2(x)$ and $\xi = r \omega = r (\cos(\theta), \sin(\theta) ) $. Then $\overline { \{ H(x , r \omega) = 1 \} } $ can be parametrized by
\[ r^2(\theta) = \lambda^2 (x) \cos^2 ( 2 \theta ) \, . \]  
The following figure shows $\overline { \{ H(p ,\cdot) = 1 \} }$ in this degenerate and singular case.

\begin{figurehere} \centering
\includegraphics[width=4cm,height=4cm]{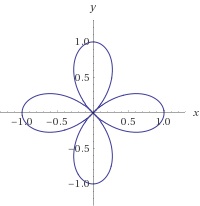}
\caption{The closure of the level curve $\overline{ \{ H(a , \xi) = 1 \} } $ for a fixed $a \in \partial \Omega$ when $A(a) = \text{diag} (1, -1) $, where $\xi = (x,y) $.} 
\end{figurehere}
\noindent 
\noindent Locally around a point $ (x , \xi ) \in \overline{ \{H = 1\} } $ away from $ (x,0)$ where $H(x) = 0$, the flow $ X_{\tilde{\tilde{H}}}$ is again given by $\partial_t (x(t),p(t)) = X_{\tilde{\tilde{H}}} (x(t), p(t))$. It projects to the $x$-coordinate to give $\partial_t x(t) = \partial_{\xi} \tilde{\tilde{H}} $, which is the normal of the level set $ \{ H(x ,\cdot) = 1 \}$.  However, when $ (x , \xi ) \in \overline{ \{H = 1\} } $ is close to $ (x,0)$ where $H(x) = 0$, we can see from the above figure that the normal of the level set $ \overline{ \{ H(x ,\cdot) = 1 \} }$ is behaving pathologically, creating a pathological behavior of the flow around that point.
Further study of the local and global structures of the flow $X_{\tilde{\tilde{H}}}$ (e.g. its dynamical property) will be the subject of a forthcoming work.

\end{document}